\documentclass[10pt,a4paper]{amsart}

\usepackage[T1]{fontenc}
\usepackage{lmodern}
\usepackage{amsmath,amssymb,mathtools}
\usepackage{graphicx}
\usepackage{array}
\usepackage{enumitem}
\usepackage{float}
\usepackage{aliascnt}
\usepackage{needspace}
\usepackage{microtype}
\usepackage{tikz}
\usetikzlibrary{decorations.pathreplacing,arrows.meta}
\usepackage{xcolor}
\usepackage[colorlinks=true,linkcolor=blue,citecolor=blue,urlcolor=blue,
pdftitle={Unique mutation cycles of nucleus-extension quivers},
pdfsubject={Uniqueness of mutation cycles in labeled mutation graphs},
pdfkeywords={quiver mutation, mutation cycle, long mutation cycle, exit, global descent}]{hyperref}
\usepackage[nameinlink,noabbrev,capitalise]{cleveref}

\newtheorem{theorem}{Theorem}[section]

\newaliascnt{proposition}{theorem}
\newtheorem{proposition}[proposition]{Proposition}
\aliascntresetthe{proposition}

\newaliascnt{lemma}{theorem}
\newtheorem{lemma}[lemma]{Lemma}
\aliascntresetthe{lemma}

\newaliascnt{corollary}{theorem}
\newtheorem{corollary}[corollary]{Corollary}
\aliascntresetthe{corollary}

\theoremstyle{definition}
\newaliascnt{definition}{theorem}
\newtheorem{definition}[definition]{Definition}
\aliascntresetthe{definition}

\newaliascnt{example}{theorem}
\newtheorem{example}[example]{Example}
\aliascntresetthe{example}

\theoremstyle{definition}
\newaliascnt{remark}{theorem}
\newtheorem{remark}[remark]{Remark}
\aliascntresetthe{remark}

\AddToHook{env/theorem/begin}{\crefalias{section}{theorem}}
\AddToHook{env/proposition/begin}{\crefalias{section}{proposition}}
\AddToHook{env/lemma/begin}{\crefalias{section}{lemma}}
\AddToHook{env/corollary/begin}{\crefalias{section}{corollary}}
\AddToHook{env/definition/begin}{\crefalias{section}{definition}}
\AddToHook{env/example/begin}{\crefalias{section}{example}}
\AddToHook{env/remark/begin}{\crefalias{section}{remark}}

\numberwithin{equation}{section}
\numberwithin{table}{section}

\crefname{equation}{equation}{equations}
\Crefname{equation}{Equation}{Equations}
\crefname{table}{Table}{Tables}
\Crefname{table}{Table}{Tables}
\crefname{figure}{Figure}{Figures}
\Crefname{figure}{Figure}{Figures}

\newcommand{\nminus}{N^{-}}
\newcommand{\nplus}{N^{+}}
\newcommand{\Rclass}{\mathcal{R}}
\newcommand{\SSclass}{\mathcal{SS}}
\newcommand{\Sclass}{\mathcal{S}}
\newcommand{\Qalt}[1]{Q^{#1}}
\newenvironment{weightcalc}
  {\par\addvspace{4pt}\indent$\displaystyle\begin{aligned}}
  {\end{aligned}$\par\addvspace{4pt}\ignorespacesafterend}

\newcommand{\Xj}[1]{X^{#1}}
\newcommand{\op}{\mathrm{op}}

\tikzset{fivearrow/.style={->,>=stealth,line width=.8pt,shorten >=5pt,shorten <=5pt},
         triarr/.style={->,>=stealth,line width=.8pt,shorten >=4pt,shorten <=4pt}}
\newcommand{\fivecoords}{%
  \coordinate (cm) at (-1.85,0.00);
  \coordinate (e)  at (-0.67,1.18);
  \coordinate (w)  at ( 0.67,1.18);
  \coordinate (cp) at ( 1.85,0.00);
  \coordinate (d)  at (0,-1.05);
}
\newcommand{\fivelabels}{%
  \node[fill=none,inner sep=1pt,font=\small] at (cm) {$N^-$};
  \node[fill=none,inner sep=1pt,font=\small] at (e) {$e$};
  \node[fill=none,inner sep=1pt,font=\small] at (w) {$w$};
  \node[fill=none,inner sep=1pt,font=\small] at (cp) {$N^+$};
  \node[fill=none,inner sep=1pt,font=\small] at (d) {$D$};
}

\newcommand{\GraphOne}{%
\begin{tikzpicture}[scale=.91]
\fivecoords
\draw[fivearrow] (w)--(e); \draw[fivearrow] (e)--(d); \draw[fivearrow] (d)--(w);
\draw[fivearrow] (e)--(cm); \draw[fivearrow] (cm)--(w); \draw[fivearrow] (cm)--(d);
\draw[fivearrow] (cp)--(e); \draw[fivearrow] (w)--(cp); \draw[fivearrow,shorten >=9pt] (d)--(cp);
\draw[fivearrow,shorten >=10pt] (cm) to[bend right=14] (cp);
\fivelabels
\end{tikzpicture}}
\newcommand{\GraphTwo}{%
\begin{tikzpicture}[scale=.91]
\fivecoords
\draw[fivearrow] (d)--(e); \draw[fivearrow] (e)--(w); \draw[fivearrow] (w)--(d);
\draw[fivearrow] (cm)--(e); \draw[fivearrow] (w)--(cm); \draw[fivearrow] (cm)--(d);
\draw[fivearrow] (cp)--(e); \draw[fivearrow] (cp)--(w); \draw[fivearrow,shorten >=9pt] (d)--(cp);
\draw[fivearrow,shorten >=10pt] (cm) to[bend right=14] (cp);
\fivelabels
\end{tikzpicture}}
\newcommand{\GraphTwoAtOne}{%
\begin{tikzpicture}[scale=.91]
\fivecoords
\draw[fivearrow] (e)--(d); \draw[fivearrow] (e)--(w); \draw[fivearrow] (w)--(d);
\draw[fivearrow] (cm)--(e); \draw[fivearrow] (w)--(cm); \draw[fivearrow] (cm)--(d);
\draw[fivearrow] (cp)--(e); \draw[fivearrow] (cp)--(w); \draw[fivearrow,shorten >=9pt] (d)--(cp);
\draw[fivearrow,shorten >=10pt] (cm) to[bend right=14] (cp);
\fivelabels
\end{tikzpicture}}
\newcommand{\GraphOdd}{%
\begin{tikzpicture}[scale=.91]
\fivecoords
\draw[fivearrow] (w)--(e); \draw[fivearrow] (e)--(d); \draw[fivearrow] (d)--(w);
\draw[fivearrow] (e)--(cm); \draw[fivearrow] (cm)--(w); \draw[fivearrow] (cm)--(d);
\draw[fivearrow] (e)--(cp); \draw[fivearrow] (cp)--(w); \draw[fivearrow,shorten >=9pt] (d)--(cp);
\draw[fivearrow,shorten >=10pt] (cm) to[bend right=14] (cp);
\fivelabels
\end{tikzpicture}}
\newcommand{\GraphEven}{%
\begin{tikzpicture}[scale=.91]
\fivecoords
\draw[fivearrow] (d)--(e); \draw[fivearrow] (e)--(w); \draw[fivearrow] (w)--(d);
\draw[fivearrow] (cm)--(e); \draw[fivearrow] (w)--(cm); \draw[fivearrow] (cm)--(d);
\draw[fivearrow] (cp)--(e); \draw[fivearrow] (w)--(cp); \draw[fivearrow,shorten >=9pt] (d)--(cp);
\draw[fivearrow,shorten >=10pt] (cm) to[bend right=14] (cp);
\fivelabels
\end{tikzpicture}}
\newcommand{\GraphTwoK}{%
\begin{tikzpicture}[scale=.91]
\fivecoords
\draw[fivearrow] (e)--(w); \draw[fivearrow] (w)--(d); \draw[fivearrow] (e)--(d);
\draw[fivearrow] (cm)--(e); \draw[fivearrow] (w)--(cm); \draw[fivearrow] (cm)--(d);
\draw[fivearrow] (cp)--(e); \draw[fivearrow] (w)--(cp); \draw[fivearrow,shorten >=9pt] (d)--(cp);
\draw[fivearrow,shorten >=10pt] (cm) to[bend right=14] (cp);
\fivelabels
\end{tikzpicture}}
\newcommand{\GraphTwoKOne}{%
\begin{tikzpicture}[scale=.91]
\fivecoords
\draw[fivearrow] (w)--(d); \draw[fivearrow] (d)--(e); \draw[fivearrow] (w)--(e);
\draw[fivearrow] (e)--(cm); \draw[fivearrow] (cm)--(w); \draw[fivearrow] (cm)--(d);
\draw[fivearrow] (e)--(cp); \draw[fivearrow] (cp)--(w); \draw[fivearrow,shorten >=9pt] (d)--(cp);
\draw[fivearrow,shorten >=10pt] (cm) to[bend right=14] (cp);
\fivelabels
\end{tikzpicture}}
\newcommand{\GraphTwoKTwo}{%
\begin{tikzpicture}[scale=.91]
\fivecoords
\draw[fivearrow] (d)--(e); \draw[fivearrow] (e)--(w); \draw[fivearrow] (d)--(w);
\draw[fivearrow] (cm)--(e); \draw[fivearrow] (w)--(cm); \draw[fivearrow] (cm)--(d);
\draw[fivearrow] (cp)--(e); \draw[fivearrow] (w)--(cp); \draw[fivearrow,shorten >=9pt] (d)--(cp);
\draw[fivearrow,shorten >=10pt] (cm) to[bend right=14] (cp);
\fivelabels
\end{tikzpicture}}
\title[Unique mutation cycles]{Unique mutation cycles of nucleus-extension quivers}

\author{Yuhang Li}
\address{School of Mathematical Sciences, Zhejiang University of Technology, Hangzhou 310023, China}
\email{color3437009@gmail.com}

\author{Haiyan Zhu$^{*}$}
\address{School of Mathematical Sciences, Zhejiang University of Technology, Hangzhou 310023, China}
\thanks{$^{*}$Corresponding author.}
\thanks{Supported by the National Natural Science Foundation of China (12271481).}
\email{hyzhu@zjut.edu.cn}

\begin{document}
\begin{abstract}
We prove the uniqueness conjecture of Fomin and Neville for their long
mutation cycles in all ranks $n\ge4$. More generally, we show that every
nucleus-extension quiver has a unique simple cycle in its labeled mutation
graph. These quivers allow an acyclic part of arbitrary size outside the
nucleus. Our proof uses three mutation statuses determined by a closed
nucleus and the full subquivers obtained by deleting one part of its nucleus
decomposition at a time from the ambient quiver. We analyze transitions
between these statuses to prove that every mutation leaving the prescribed
cycle is performed at an exit.
\end{abstract}

\maketitle
\enlargethispage{2pt}

\section{Introduction}\label{sec:introduction}

\emph{Quiver mutation}, introduced by Fomin and Zelevinsky in the theory of
cluster algebras \cite{FZ02}, is an elementary operation on quivers. The
labeled quivers in a \emph{mutation class} and the single mutations between
them form its \emph{labeled mutation graph}. A \emph{mutation cycle} is a
reduced closed walk in this graph, and it is \emph{simple} if no quiver
is repeated except at the beginning and end. Examples arise from
triangulated surfaces \cite{FST08}, periodicity for pairs of Dynkin diagrams
\cite{Kel13}, flip moves in plabic graphs \cite{BW20}, and mutation-periodic
quivers \cite{FM11}.

Cao and Li \cite{CL19} proved that triangular extensions preserve the
existence of maximal green sequences and observed that the same argument
applies to reddening sequences. Fomin and Neville \cite{FN25} constructed
simple mutation cycles of length $n+4k$ for every $n\ge4$ and $k\ge1$, so
their lengths are unbounded even for a fixed number of vertices. They
conjectured that each cycle in this family is the unique simple cycle in its
labeled mutation graph and proved the conjecture for $n=4$. They also noted
that uniqueness would make the associated cluster modular groups
\cite{FG09} infinite cyclic. Using reddening sequences and triangular
extensions, Ervin and Neville \cite[Theorem~4.24]{EN26} extended the
Fomin--Neville construction to a broader class of simple mutation cycles.

In this paper, we prove the uniqueness conjecture posed by Fomin and Neville
in \emph{Long mutation cycles} \cite{FN25}. More generally, \cref{thm:main}
establishes uniqueness for \emph{nucleus-extension quivers}, which form a subfamily
of the Ervin--Neville construction and allow an acyclic part of arbitrary
size outside the nucleus. We use results on forks from Warkentin \cite{War14} and
Ervin \cite{Erv24}, while following the terminology and notation of Fomin and Neville
\cite{FN25}. The key idea is to study a quiver $Q$ relative to a \emph{closed nucleus}
$Q|_N$.
The global descent $e$ of the nucleus determines a decomposition
$N=D\sqcup\{e\}\sqcup U$ of its vertex set, where the vertices in $D$ point to
$e$, while $e$ points to those in $U$. Removing $\{e\}$, $D$, and $U$ separately
from $V(Q)$ gives three vertex sets inducing the full subquivers
$Q|_{\{e\}^C}$, $Q|_{D^C}$, and $Q|_{U^C}$. This configuration is the basis for
three \emph{mutation statuses}, $\Rclass$, $\Sclass$, and $\SSclass$. We show how
the structure defining the $\Rclass$-status reappears under mutation, possibly
with a different closed nucleus. This allows us to analyze reduced mutation sequences
inductively even when the exit criterion based on strict growth of the sum
of edge weights does not apply to the initial mutation, and to prove that
every mutation leaving the prescribed cycle is performed at an exit.

\Needspace{3\baselineskip}
\smallskip
\noindent\textbf{Organization.}
\Cref{sec:preliminaries} recalls the basic notions of quiver mutation, mutation cycles,
exits, and the rank-three results used later. \Cref{sec:classes} introduces nucleus,
closed nucleus, and the mutation statuses $\Rclass$, $\Sclass$, and $\SSclass$, and
establishes their basic structural properties. \Cref{sec:mutation-properties} studies
how these statuses behave under mutation and derives the exit results needed for the
proof. \Cref{sec:cycle} applies these results to nucleus-extension quivers and proves
the uniqueness of the prescribed simple cycle.

\section{Preliminaries}\label{sec:preliminaries}

We recall the basic notions of quivers and quiver mutation following
\cite{FWZ21}.

For integers $a$ and $b$, write $[a,b]=\{a,a+1,\ldots,b\}$, interpreted
as empty when $a>b$, and abbreviate $[1,n]$ to $[n]$.

A \emph{quiver} is a directed graph without loops or oriented $2$-cycles;
parallel arrows are allowed. All quivers are finite and labeled, and equality
means equality as labeled directed graphs. We write $V(Q)$ for the vertex set of $Q$.

For $S\subseteq V(Q)$, the \emph{full subquiver} $Q|_S$ has vertex set $S$
and all arrows of $Q$ joining vertices of $S$. Throughout the paper, a
subquiver means a full subquiver. We write $S^C:=V(Q)\setminus S$.

The skew-symmetric \emph{exchange matrix} $B(Q)=(b_{ij}(Q))$ is defined by
\[
 b_{ij}(Q)=\#\{\text{arrows }i\to j\}
           -\#\{\text{arrows }j\to i\}.
\]
Thus $b_{ij}>0$ precisely when $i\to j$, and $|b_{ij}|$ is the \emph{weight}
between $i$ and $j$. Following Warkentin \cite{War14}, a quiver is
\emph{abundant} if $|b_{ij}|\geq2$ for all distinct vertices $i,j$.

For disjoint vertex sets $A,B\subseteq V(Q)$, write $A\to B$ if $a\to b$
for every $a\in A$ and $b\in B$; the same notation is used when either set
is a singleton. We denote by $Q^{\op}$ the \emph{global reversal} of $Q$,
obtained by reversing all arrows.

A vertex is a \emph{source}, respectively a \emph{sink}, if every incident
arrow points away from it, respectively toward it. A quiver is \emph{acyclic}
if it has no oriented cycles. A nonempty abundant acyclic quiver has a
unique source and a unique sink.

\begin{definition}[{\cite{FZ02}}]\label{def:quiver-mutation}
Let $Q$ be a quiver with exchange matrix $B(Q)=(b_{ij})$ and let
$v\in V(Q)$. The \emph{mutation} of $Q$ at $v$, denoted by $\mu_v(Q)$,
is the quiver with exchange matrix $B(\mu_v(Q))=(b'_{ij})$, where
\[
 b'_{ij}=
 \begin{cases}
  -b_{ij},& i=v \text{ or } j=v,\\[1mm]
  b_{ij}+[b_{iv}]_+[b_{vj}]_+-[-b_{iv}]_+[-b_{vj}]_+,
    & i,j\neq v,
 \end{cases}
 \qquad [x]_+=\max\{x,0\}.
\]
\end{definition}

Mutation is an involution and commutes with global reversal. If $v\in S$, then
$\mu_v(Q)|_S=\mu_v(Q|_S)$. Mutation at a source or a sink only reverses the
arrows incident to that vertex.

A \emph{mutation sequence} is a finite sequence
$\mathbf i=(i_1,\ldots,i_s)$ of vertices of $Q$. Set
\[
 \mu_{\mathbf i}=\mu_{i_s}\circ\cdots\circ\mu_{i_1}.
\]
Thus the mutations in a displayed sequence are performed from left to right.
The sequence $\mathbf i$ is \emph{reduced} if consecutive vertices are
distinct. For distinct vertices $i,j$ and $k\geq0$, write
$(i,j)^k=(i,j,i,j,\ldots,i,j)$ for the sequence with $2k$ terms, with
$(i,j)^0$ the empty sequence.

We recall the following definitions and lemmas from \cite{FN25}.

Two labeled quivers are \emph{mutation-equivalent} if one can be obtained
from the other by a mutation sequence. The \emph{mutation class} of $Q$
consists of all labeled quivers mutation-equivalent to $Q$. Its \emph{labeled
mutation graph} has these quivers as vertices and an edge labeled $i$ between
$X$ and $\mu_i(X)$.

\begin{definition}\label{def:mutation-cycle}
Let $X$ be a quiver and let $\mathbf i=(i_1,\ldots,i_\ell)$ be a mutation
sequence with $\ell\geq1$. Set
\[
 X^0:=X,\qquad
 X^j:=\mu_{(i_1,\ldots,i_j)}(X)\quad(1\leq j\leq \ell).
\]
Suppose that
\[
 X^\ell=X^0,\qquad X^{j-1}\neq X^j\quad(1\leq j\leq \ell),
\]
and that $i_j\neq i_{j+1}$ for $1\leq j<\ell$ and $i_\ell\neq i_1$. Then the closed walk
\[
 X^0\mathrel{\mathop{\rule{1.5em}{0.4pt}}\limits^{i_1}}X^1
 \mathrel{\mathop{\rule{1.5em}{0.4pt}}\limits^{i_2}}\cdots
 \mathrel{\mathop{\rule{1.5em}{0.4pt}}\limits^{i_\ell}}X^\ell=X^0
\]
is a \emph{mutation cycle}. The cycle is
\emph{simple} if $X^0,\ldots,X^{\ell-1}$ are pairwise distinct.
\end{definition}

\begin{definition}\label{def:exit}
A vertex $i$ in a quiver $Q$ is an \emph{exit} if
$\mu_{\mathbf j}(Q)\neq Q$ for every reduced mutation sequence
$\mathbf j=(j_1,\ldots,j_\ell)$, $\ell\geq1$, with $j_1=i$.
Consequently, the edge labeled $i$ between $Q$ and $\mu_i(Q)$ in the labeled
mutation graph does not lie on any mutation cycle.
\end{definition}

We next recall the required facts about $3$-vertex quivers. The term
\emph{cyclic} will always refer to a $3$-vertex subquiver containing an
oriented $3$-cycle. An acyclic $3$-vertex quiver with nonzero weights has one
source and one sink; the remaining vertex is its \emph{elbow}.

A vertex of a $3$-vertex quiver is an \emph{ascent}, respectively a
\emph{descent}, if mutation at that vertex strictly increases, respectively
decreases, the weight of the opposite edge.

\begin{lemma}\label{lem:rank-three-descent}
Let $Q$ be an abundant $3$-vertex quiver.
\begin{enumerate}[label=\textup{(\roman*)},leftmargin=2.2em,itemsep=2pt]
\item If $Q$ is acyclic, then it has exactly one ascent and no descents.
\item If $Q$ is cyclic, then it has at most one descent. If a descent exists,
then it is unique, lies opposite the unique largest weight, and the other two
vertices are ascents.
\end{enumerate}
\end{lemma}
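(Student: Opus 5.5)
The plan is a direct computation with the mutation rule in \cref{def:quiver-mutation}, restricted to three vertices. Label the vertices $1,2,3$ and, for a vertex $v$, let $a,b\ge2$ be the weights of the two edges at $v$ and $c\ge2$ the weight of the opposite edge. The opposite edge is the only one whose weight can change when we mutate at $v$. There are two situations to track.

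If $v$ lies on a directed path of length two through it, i.e.\ $i\to v\to j$, then $b'_{ij}=b_{ij}+ab$. So the new weight of the opposite edge is $|b_{ij}+ab|$. Otherwise $v$ is a source or a sink of the subquiver, and by the remark after \cref{def:quiver-mutation} the opposite weight is unchanged.

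For (i), $Q$ is acyclic with nonzero weights, so it has a source, a sink and an elbow. Mutation at the source or the sink does not change the opposite weight, so neither is an ascent or a descent. At the elbow $v$ we have $i\to v\to j$, and acyclicity forces $i\to j$, so $b_{ij}=c>0$. The new weight is $c+ab>c$. Hence the elbow is an ascent, and there are no descents.

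For (ii), $Q$ is an oriented $3$-cycle $i\to v\to j\to i$. Every vertex is the middle of a path of length two, and $b_{ij}=-c$. Mutation at $v$ therefore changes the opposite weight from $c$ to $|ab-c|$. Consequently $v$ is a descent iff $|ab-c|<c$, iff $0<ab<2c$, iff $ab<2c$.

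Now write the three weights as $x,y,z\ge2$, and suppose the vertex opposite $z$ is a descent, so $xy<2z$. Using $x,y\ge2$ we get $xz\ge 2z>xy$, so $z>y$, and by symmetry $z>x$. Thus $z$ is the unique largest weight, and a descent can only lie opposite it. This already gives at most one descent.

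For the remaining two vertices, the one opposite $x$ is an ascent iff $|yz-x|>x$. Since $yz\ge 2z>2x$, we have $yz-x>x$, so it is an ascent. The same argument with $x$ and $y$ swapped shows the vertex opposite $y$ is an ascent.

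The only delicate point is in (ii): a vertex could in principle be neither an ascent nor a descent, which happens when $ab=2c$ (the opposite weight is then unchanged). The statement allows this, and the case analysis covers it correctly, since the conclusions about the other two vertices are only claimed when a descent actually exists.
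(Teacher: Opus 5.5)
Your proof is correct and complete. The rule from \cref{def:quiver-mutation} gives three cases: the opposite weight is unchanged at a source or sink, becomes $c+ab$ at an elbow, and becomes $|ab-c|$ in an oriented $3$-cycle, so a vertex there is a descent iff $ab<2c$ and an ascent iff $ab>2c$. Combined with the abundance inequality $xz\ge 2z>xy$, this yields every claim in (i) and (ii).

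The paper gives no proof of this lemma; it recalls it from Fomin--Neville. Your direct rank-three computation is the standard way to verify it.
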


\begin{lemma}\label{lem:rank-three-edge-invariance}
Let $i,j,u,v$ be four distinct vertices of a quiver $Q$. If $i$ is an
ascent or a descent in each of $Q|_{\{i,j,u\}}$ and
$Q|_{\{i,j,v\}}$, then
\[
 b_{uv}(Q)=b_{uv}(\mu_i(Q)).
\]
\end{lemma}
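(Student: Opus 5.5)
The plan is to work directly from the mutation formula in \cref{def:quiver-mutation}. First I will translate the hypothesis ``$i$ is an ascent or a descent in $Q|_{\{i,j,u\}}$'' into a sign condition on the exchange matrix entries at $i$. Then I will do the same for $\{i,j,v\}$. Finally I will combine the two sign conditions to show that the correction term for $b_{uv}$ vanishes.

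\emph{Step 1.} For distinct $x,y\neq i$, the formula gives
\[
b_{xy}(\mu_i(Q))-b_{xy}(Q)=[b_{xi}]_+[b_{iy}]_+-[-b_{xi}]_+[-b_{iy}]_+ .
\]
The right-hand side is nonzero exactly when $b_{xi}b_{iy}>0$, that is, when $x\to i\to y$ or $y\to i\to x$. In other words, it is nonzero exactly when $b_{ix}$ and $b_{iy}$ are nonzero of opposite signs.

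\emph{Step 2.} By definition, $i$ is an ascent or a descent of the $3$-vertex subquiver $Q|_{\{i,j,u\}}$ when mutation at $i$ strictly changes the weight $|b_{ju}|$ of the opposite edge. We use that $\mu_i(Q)|_{\{i,j,u\}}=\mu_i(Q|_{\{i,j,u\}})$. A strict change of $|b_{ju}|$ forces a change of $b_{ju}$. By Step~1, $b_{ij}$ and $b_{iu}$ are then nonzero of opposite signs.

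The same argument applied to $\{i,j,v\}$ shows that $b_{ij}$ and $b_{iv}$ are nonzero of opposite signs. Hence $b_{iu}$ and $b_{iv}$ are both nonzero with the same sign as each other, namely the sign opposite to that of $b_{ij}$. So $u$ and $v$ are either both in-neighbours or both out-neighbours of $i$.

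\emph{Step 3.} Apply Step~1 to the pair $(u,v)$. Since $b_{iu}$ and $b_{iv}$ have the same sign, the product $b_{ui}b_{iv}=-b_{iu}b_{iv}$ is negative, so the correction term vanishes. Therefore $b_{uv}(\mu_i(Q))=b_{uv}(Q)$, as claimed.

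The only delicate point is Step~2. One must note that ascent and descent concern the absolute weight. A nonzero correction to $b_{ju}$ need not change $|b_{ju}|$; it could, for instance, flip $b_{ju}$ from $-1$ to $+1$. This direction of implication is not needed, however. We only use that a strict change in $|b_{ju}|$ implies a nonzero correction term, and that is immediate. Neither abundance nor \cref{lem:rank-three-descent} is needed.
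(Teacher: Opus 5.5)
Your argument is correct: the hypothesis forces $b_{ij}$ to be nonzero with sign opposite to both $b_{iu}$ and $b_{iv}$. Hence $u$ and $v$ lie on the same side of $i$, and the correction term in \cref{def:quiver-mutation} for $b_{uv}$ vanishes. The paper gives no proof of its own, since it recalls this lemma from \cite{FN25}; your sign check, which needs neither abundance nor \cref{lem:rank-three-descent}, is the standard verification.
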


For $a\ge2$, define $p_\ell(a)$ for $\ell\ge-1$ by the Chebyshev recurrence
\begin{equation}\label{eq:rank-three-recurrence}
 p_{-1}(a)=0,\qquad p_0(a)=1,\qquad
 p_{\ell+1}(a)=a p_\ell(a)-p_{\ell-1}(a)\quad(\ell\ge0).
\end{equation}
\begin{lemma}\label{lem:rank-three-alternating}
Let $Q$ be an abundant acyclic quiver on the vertex set
$\{1,2,3\}$, whose elbow is $1$. Without loss of generality, write
\[
 3\xrightarrow{\,b\,}1\xrightarrow{\,a\,}2,
 \qquad 3\xrightarrow{\,c\,}2,
\]
where $a,b,c\ge2$, and put $p_\ell=p_\ell(a)$. For $s\ge0$, set
\[
 Q^0=Q,\qquad Q^{2s}=\mu_{(1,2)^s}(Q),\qquad
 Q^{2s+1}=\mu_1\bigl(Q^{2s}\bigr).
\]
For $s\ge0$,
\[
 \begin{aligned}
 b_{21}(Q^{2s+1})&=a,\\
 b_{13}(Q^{2s+1})&=p_{2s}b+p_{2s-1}c,\\
 b_{32}(Q^{2s+1})&=p_{2s+1}b+p_{2s}c,
 \end{aligned}
\]
and, for $s\ge1$,
\[
 \begin{aligned}
 b_{12}(Q^{2s})&=a,\\
 b_{23}(Q^{2s})&=p_{2s-1}b+p_{2s-2}c,\\
 b_{31}(Q^{2s})&=p_{2s}b+p_{2s-1}c.
 \end{aligned}
\]
In particular, $Q^{j}$ is cyclic for $j\ge1$.
\end{lemma}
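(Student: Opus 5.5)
The plan is induction on $s$, applying the mutation rule of \cref{def:quiver-mutation} directly at each step and tracking only the three oriented weights together with their signs. Two identities of the recurrence \eqref{eq:rank-three-recurrence} will do the work:
\[
 a p_\ell - p_{\ell-1} = p_{\ell+1}, \qquad p_{-1}=0,\quad p_0=1 .
\]

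\emph{Base case $j=1$.} Vertex $1$ is the elbow, with $3\to1\to2$. Mutating at $1$ reverses the two arrows at $1$, giving $b_{21}=a$ and $b_{13}=b$. The remaining edge becomes
\[
 b_{32}' = c + [b_{31}]_+[b_{12}]_+ = c + ab .
\]
This matches $p_0b+p_{-1}c=b$ and $p_1b+p_0c=ab+c$, since $p_1=a$. The quiver $Q^1$ has arrows $2\to1\to3\to2$, so it is cyclic, and all weights stay $\ge2$.

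\emph{Step from $Q^{2s+1}$ to $Q^{2s+2}$.} Assume the formulas for $Q^{2s+1}$: the arrows are $2\to1$ (weight $a$), $1\to3$ (weight $\beta=p_{2s}b+p_{2s-1}c$), and $3\to2$ (weight $\gamma=p_{2s+1}b+p_{2s}c$). All of these are positive, because $p_\ell\ge1$ for $\ell\ge0$ and $p_\ell$ is increasing when $a\ge2$. Mutating at $2$ reverses the arrows at $2$, giving $b_{12}=a$ and $b_{23}=\gamma=p_{2s+1}b+p_{2s}c$, as claimed. Through $2$ we have the path $3\to2\to1$, so the product term $[b_{32}]_+[b_{21}]_+=\gamma a$ is added to $b_{31}=-\beta$. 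Hence
\[
 b_{31}' = a\gamma - \beta = (ap_{2s+1}-p_{2s})b + (ap_{2s}-p_{2s-1})c = p_{2s+2}b + p_{2s+1}c .
\]
This is positive, so the resulting orientation is $1\to2\to3\to1$: cyclic, with the stated indices.

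\emph{Step from $Q^{2s+2}$ to $Q^{2s+3}$.} This is the symmetric computation at vertex $1$. In $Q^{2s+2}$ the path through $1$ is $3\to1\to2$, so mutation gives $b_{21}=a$ and $b_{13}=p_{2s+2}b+p_{2s+1}c$. The third weight becomes
\[
 b_{32}' = -b_{23} + a\,b_{31} = a(p_{2s+2}b+p_{2s+1}c) - (p_{2s+1}b+p_{2s}c) = p_{2s+3}b + p_{2s+2}c .
\]
This closes the induction. Cyclicity of $Q^j$ for $j\ge1$ follows from the orientations already recorded at each step.

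The only real care point is sign bookkeeping: at each step one must confirm that the path through the mutated vertex is oriented. This holds because the arrow at the mutated vertex in the previous quiver goes in and then out. Positivity of the new weights, which fixes the orientation, relies on the monotonicity $p_{\ell+1}\ge p_\ell$, which holds for $a\ge2$ since $p_{\ell+1}-p_\ell\ge p_\ell-p_{\ell-1}\ge\cdots\ge1$.
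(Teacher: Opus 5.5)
Your induction is correct. At each step, positivity of $p_\ell$ for $\ell\ge0$ (valid because $a\ge2$) fixes the orientation, so the sign bookkeeping is justified, and the recurrence $ap_\ell-p_{\ell-1}=p_{\ell+1}$ turns each mutation-rule update into exactly the stated weights, including the $s=0$ steps via $p_{-1}=0$. The paper gives no proof of this lemma; it is recalled from Fomin and Neville. Your direct computation from the mutation rule is the natural way to verify it.
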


\Needspace{10\baselineskip}
\section{Nucleus-based mutation statuses}\label{sec:classes}

We introduce three nucleus-based mutation statuses and establish their
structural properties.

\subsection{Nucleus and closed nucleus}\label{subsec:the-nucleus}

Before defining nucleus, we recall the notions of vortex and global descent
from \cite[Definitions~6.2 and 6.3]{FN25}.

\begin{definition}\label{def:global-descent}
A quiver $Q$ has a \emph{global descent} at a vertex $g$ if
$Q$ contains at least one cyclic subquiver and every cyclic subquiver of $Q$
has descent $g$.
\end{definition}

\begin{remark}\label{rem:delete-global-descent}
Let $Q$ be an abundant quiver with a global descent at $e$. Then
$Q|_{\{e\}^C}$ is acyclic; see \cite[Lemma~6.5]{FN25}.
\end{remark}

\begin{definition}\label{def:vortex}
A \emph{vortex} is a $4$-vertex quiver such that all its weights are
nonzero, one vertex is a source or a sink, and the other three vertices
form a cyclic subquiver. The unique source or sink is called the \emph{apex}
of the vortex. A quiver is \emph{vortex-free} if none of its full $4$-vertex
subquivers is a vortex.
\end{definition}

\begin{definition}\label{def:nucleus}
A full subquiver $Q|_N$ is a \emph{nucleus} of $Q$ if it is abundant,
vortex-free, and has a global descent at $e$. Its \emph{nucleus decomposition} is
$N=D\sqcup\{e\}\sqcup U$, where
\[
D:=\{v\in N:v\longrightarrow e\},\qquad U:=\{v\in N:e\longrightarrow v\}.
\]
Moreover, the nucleus $Q|_N$ is called a \emph{closed nucleus} of $Q$ if
\[
 N=\{v\in V(Q):v\text{ belongs to a cyclic subquiver of }Q
 \text{ with descent }e\}.
\]
\end{definition}

The basic lemmas in this subsection are reformulations or consequences of
results in \cite[Section~6]{FN25}.

\begin{lemma}\label{lem:global-descent-structure}
Let $Q|_N$ be a nucleus with decomposition $N=D\sqcup\{e\}\sqcup U$. Then $Q|_{\{d,e,u\}}$ is cyclic with descent $e$
for every $d\in D$ and $u\in U$.
\end{lemma}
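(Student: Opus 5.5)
The plan is to use the three defining properties of a nucleus (abundance, the global descent at $e$, and vortex-freeness) together with the orientation of the triangle forced by $D$ and $U$.

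\textbf{Setup.} Fix $d\in D$ and $u\in U$. By definition $d\to e\to u$, and abundance gives $b_{du}\neq0$. If $u\to d$, then $Q|_{\{d,e,u\}}$ is an oriented $3$-cycle. Every cyclic subquiver of $Q|_N$ has descent $e$ by \cref{def:global-descent}, so we are done in that case. The whole task is therefore to exclude $d\to u$.

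\textbf{Base triangle.} Since $Q|_N$ has a global descent, it contains at least one cyclic triangle. That triangle has descent $e$, so in particular it contains $e$. Write it as $x\to e\to y\to x$. Then $x\in D$, $y\in U$ and $y\to x$.

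\textbf{Step 1: replace $y$ by an arbitrary $u\in U$.} Let $u\in U$ with $u\neq y$, and suppose for contradiction that $x\to u$. Consider the $4$-vertex subquiver on $\{x,e,y,u\}$. It has all weights nonzero by abundance, and $\{x,e,y\}$ is cyclic. We already have $x\to u$ and $e\to u$.
\begin{itemize}
\item If $y\to u$, then $u$ is a sink, so this subquiver is a vortex with apex $u$, contradicting vortex-freeness.
\item Hence $u\to y$. But then $x\to u\to y\to x$ is a cyclic triangle not containing $e$. This contradicts the global descent, which requires every cyclic subquiver to have descent $e$.
\end{itemize}
Thus $u\to x$, so $Q|_{\{x,e,u\}}$ is cyclic for every $u\in U$.

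\textbf{Step 2: replace $x$ by an arbitrary $d\in D$.} Fix $u\in U$, so that $\{x,e,u\}$ is cyclic by Step 1. Let $d\in D$ with $d\neq x$, and suppose $d\to u$. Consider $\{x,d,e,u\}$, which again has all weights nonzero by abundance. We have $d\to e$ and $d\to u$.
\begin{itemize}
\item If $d\to x$, then $d$ is a source of this subquiver, so it is a vortex, contradicting vortex-freeness.
\item Hence $x\to d$. Then $x\to d\to u\to x$ is a cyclic triangle avoiding $e$, again contradicting the global descent.
\end{itemize}
This step is the dual of Step 1; it is the same argument applied to $Q^{\op}$, with the roles of $D$ and $U$ swapped. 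Hence $u\to d$, and $Q|_{\{d,e,u\}}$ is cyclic. By the setup its descent is $e$.

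\textbf{Main obstacle.} The only delicate point is organising the case analysis so that each bad orientation produces either a vortex or a cyclic triangle without $e$. Anchoring everything on one fixed cyclic triangle $\{x,e,y\}$, and changing one vertex at a time, makes both alternatives immediate.
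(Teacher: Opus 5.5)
Your proof is correct and is essentially the paper's argument. Both anchor on one cyclic triangle with descent $e$ and change one vertex at a time, and each wrong orientation yields either a cyclic triangle avoiding $e$ or a vortex with a source or sink apex. The only difference is the order: you first vary the $U$-vertex with $x\in D$ fixed, then vary the $D$-vertex, whereas the paper first proves $u_1\to D$ for its fixed $u_1$ and then extends over $U$.
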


\begin{proof}
Since $Q|_N$ is a nucleus, there exist $d_1\in D$ and $u_1\in U$ such that
$Q|_{\{e,d_1,u_1\}}$ is cyclic with descent $e$.

It remains to prove that $U\to D$. We first claim that $u_1\to D$ for the fixed
vertex $u_1$ above. Let $d\in D$. If $d=d_1$, this follows from the choice of
$d_1$ and $u_1$. Otherwise, suppose that $d\to u_1$. By abundance, either $d_1\to d$ or
$d\to d_1$. If $d_1\to d$, then $Q|_{\{u_1,d_1,d\}}$ is cyclic and does not
contain $e$, which is impossible since $Q|_N$ has global descent at $e$. If
$d\to d_1$, then $Q|_{\{d_1,e,u_1,d\}}$ is a vortex with apex $d$, while
$Q|_N$ is vortex-free; see \cref{fig:global-descent-contradictions}. Hence
$u_1\to d$.

\begin{figure}[H]
\centering
\begin{tikzpicture}[>=stealth,scale=.95,
  every node/.style={font=\small,inner sep=1pt},
  weight/.style={fill=none,inner sep=1.1pt,font=\scriptsize}]
  \coordinate (d1) at (-1.35,0.95);
  \coordinate (e) at (0,1.8);
  \coordinate (u1) at (1.35,0.95);
  \coordinate (d) at (0,-0.35);

  \draw[->,shorten >=5pt,shorten <=5pt] (d1)--(e);
  \draw[->,shorten >=5pt,shorten <=5pt] (e)--(u1);
  \draw[->,shorten >=5pt,shorten <=5pt] (u1)--(d1);
  \draw[->,red,very thick,dashed,shorten >=5pt,shorten <=5pt] (d)--(d1);
  \draw[->,shorten >=5pt,shorten <=5pt] (d)--(e);
  \draw[->,red,very thick,dashed,shorten >=5pt,shorten <=5pt] (d)--(u1);

  \node[fill=none] at (d1) {$d_1$};
  \node[fill=none] at (e) {$e$};
  \node[fill=none] at (u1) {$u_1$};
  \node[fill=none] at (d) {$d$};
\end{tikzpicture}
\caption{}
\label{fig:global-descent-contradictions}
\end{figure}

Then we claim that $u\to d$ for any $d\in D$ and $u\in U$.
Assume that $d\to u$.
Since $u_1\to d$, we have $u\ne u_1$. If $u\to u_1$, then $Q|_{\{u_1,d,u\}}$
is cyclic and does not contain $e$; otherwise, $Q|_{\{d,e,u_1,u\}}$ is a
vortex with apex $u$. Hence $U\to D$.
\end{proof}

\begin{remark}\label{rem:nucleus-no-source-sink}
A nucleus has neither a source nor a sink.
\end{remark}

\Needspace{7\baselineskip}
For a quiver $Q$ and $i\in V(Q)$, we refer to conditions (6.3)--(6.5) of
\cite{FN25} as F1--F3, respectively:
\begin{enumerate}[label=(F\arabic*),leftmargin=3.2em,itemsep=1pt,topsep=2pt]
\item $i$ is an ascent in every cyclic subquiver of $Q$ containing $i$;
\item $i$ is neither a source nor a sink in $Q$;
\item $i$ is not the apex of a vortex in $Q$.
\end{enumerate}

\begin{lemma}\label{lem:basic-f1-f3}
Let $R$ be either an abundant acyclic quiver or a nucleus, and let
$v\in V(R)$. If $R$ is acyclic, assume that $v$ is neither a source nor a
sink; if $R$ is a nucleus, assume that $v$ is not its global descent. Then
$(R,v)$ satisfies F1--F3.
\end{lemma}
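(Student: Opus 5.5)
We check F1, F2 and F3 separately in each of the two cases.

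\emph{Acyclic case.} Let $R$ be abundant and acyclic.
\begin{enumerate}[label=\textup{(\alph*)},leftmargin=2.2em,itemsep=2pt]
\item F1 holds vacuously, since $R$ has no cyclic subquiver.
\item F2 is exactly the hypothesis on $v$.
\item For F3, every full $4$-vertex subquiver of $R$ is acyclic. A vortex contains a cyclic $3$-vertex subquiver, so no vortex occurs in $R$, and in particular $v$ is not the apex of one.
\end{enumerate}

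\emph{Nucleus case.} Let $R$ be a nucleus with global descent $e$ and decomposition $N=D\sqcup\{e\}\sqcup U$, and let $v\neq e$.
\begin{enumerate}[label=\textup{(\alph*)},leftmargin=2.2em,itemsep=2pt]
\item F3 is immediate, because $R$ is vortex-free.
\item F2 follows from \cref{rem:nucleus-no-source-sink}. To justify that remark directly, suppose $v\in D$. Pick some $u\in U$; this is possible because a cyclic subquiver with descent $e$ exists, which forces $D$ and $U$ to be nonempty. By \cref{lem:global-descent-structure}, $v\to e$ and $u\to v$, so $v$ is neither a source nor a sink. The case $v\in U$ is symmetric.
\item For F1, let $T$ be a cyclic $3$-vertex subquiver of $R$ containing $v$. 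Since $e$ is a global descent, $T$ has descent $e$; in particular $e\in T$. As $T$ is abundant and cyclic, \cref{lem:rank-three-descent}(ii) says the descent is unique and the other two vertices are ascents. Since $v\neq e$, the vertex $v$ is an ascent of $T$.
\end{enumerate}

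None of these steps is a real obstacle. The only point needing care is that the existence of a cyclic subquiver guarantees $D,U\neq\emptyset$, which is what the F2 argument uses.
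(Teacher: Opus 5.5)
Your verification is correct. The paper gives no proof of its own for this lemma. It states it, together with the neighbouring lemmas, as a reformulation of results in \cite[Section~6]{FN25}, so your argument is a self-contained alternative rather than a reconstruction of a written proof.

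In the acyclic case, F1 and F3 hold vacuously and F2 is the hypothesis, exactly as you say. In the nucleus case you obtain each condition as follows:
\begin{itemize}
\item F3 from vortex-freeness.
\item F1 from \cref{lem:rank-three-descent}\textup{(ii)}, applied to cyclic triples. Each such triple is abundant and has descent $e\neq v$, so $v$ is an ascent.
\item F2 from \cref{lem:global-descent-structure}. The paper proves that lemma before this one and without using it, so there is no circularity.
\end{itemize}

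Your route shows the lemma follows at once from the definitions plus two results already in the paper, without appealing to the external reference. Citing instead keeps the paper short and consistent with the Fomin--Neville formulation of F1--F3.

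One step is left implicit. The dichotomy $v\in D$ or $v\in U$ for $v\neq e$ uses abundance: the weight between $v$ and $e$ is nonzero. Also, for F2 it is enough to note that $v$ lies in a cyclic triple, $\{v,e,u\}$ or $\{d,e,v\}$, and so has both an incoming and an outgoing arrow.
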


\begin{lemma}\label{lem:mutation-equivalent-nucleus}
Let $Q$ be an abundant quiver, and let $(i_1,\ldots,i_m)$ be a reduced
mutation sequence with $m\geq1$. If $(Q,i_1)$ satisfies F1--F3, then
$\mu_{(i_1,\ldots,i_m)}(Q)$ is a nucleus with global descent at $i_m$.
\end{lemma}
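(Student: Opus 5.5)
The plan is induction on the length $m$ of the reduced sequence $(i_1,\ldots,i_m)$. We will carry a stronger hypothesis along the induction:
\begin{quote}
$Q^m:=\mu_{(i_1,\ldots,i_m)}(Q)$ is abundant, has a global descent at $i_m$, is vortex-free, and for every vertex $v\neq i_m$ the pair $(Q^m,v)$ satisfies F1--F3.
\end{quote}
By \cref{lem:basic-f1-f3}, the last clause follows from the first three, because $Q^m$ is then a nucleus whose global descent is $i_m$. The reduced condition $i_{m+1}\neq i_m$ then allows the induction to continue. Thus the whole statement reduces to one step.

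\textbf{One-step claim.} If $Q$ is abundant and $(Q,i)$ satisfies F1--F3, then $\mu_i(Q)$ is an abundant, vortex-free quiver with global descent at $i$.

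First we show abundance and the local structure. Take any $3$-vertex subquiver $Q|_{\{i,j,k\}}$ containing $i$.
\begin{itemize}
\item If it is cyclic, F1 says $i$ is an ascent there. Hence after mutation the opposite weight strictly increases, and the new triple is cyclic with $i$ as its descent. Mutation is an involution and the weights are abundant, so \cref{lem:rank-three-descent}(ii) applies.
\item If it is acyclic, F2 is used globally, and locally we distinguish whether $i$ is the elbow or an endpoint. When $i$ is the elbow of the triple, \cref{lem:rank-three-alternating} with $s=0$ shows that $\mu_i$ makes it cyclic, and the weights $p_1b+p_0c=ab+c$ grow. So $i$ is the descent of the new cyclic triple.
\item When $i$ is the source or sink of the triple, mutation only reverses the two arrows at $i$. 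The triple stays acyclic (or becomes one with $i$ as elbow), and the third weight is unchanged.
\end{itemize}
All weights stay at least $2$, so $\mu_i(Q)$ is abundant. By \cref{lem:rank-three-edge-invariance}, triples not containing $i$ are unchanged whenever $i$ is an ascent or descent in the relevant triples. Otherwise we compute directly: for $u,v\neq i$, the weight $b_{uv}$ changes only when $u\to i\to v$ or $v\to i\to u$.

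Next we establish the global descent at $i$. We must show every cyclic triple of $\mu_i(Q)$ has descent $i$, and that at least one exists. Existence comes from F2: since $i$ is neither a source nor a sink, there are $u\to i\to v$, and the triple $\{u,i,v\}$ becomes cyclic with descent $i$ after mutation.
\begin{itemize}
\item For a cyclic triple containing $i$, the case analysis above gives descent $i$.
\item For a triple $\{u,v,w\}$ avoiding $i$, we must show it is acyclic in $\mu_i(Q)$. This is the main obstacle. My approach is to look at the $4$-vertex subquiver $Q|_{\{i,u,v,w\}}$ and enumerate the orientations of the arrows at $i$. If $i$ is a source or sink of this $4$-vertex quiver, the triple is unchanged. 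In that case it must have been acyclic: if it were cyclic, $i$ would be the apex of a vortex, contradicting F3. If $i$ has mixed arrows, the triples containing $i$ and two of $u,v,w$ are either cyclic, where $i$ is an ascent by F1, or have $i$ as elbow. Using the explicit formula $b'_{uv}=b_{uv}+[b_{ui}]_+[b_{iv}]_+-\dots$, we then check that the mutated $\{u,v,w\}$ is acyclic: the weights that change are boosted in a direction consistent with a linear order in which the vertices of $i$'s in-neighbourhood precede those of its out-neighbourhood.
\end{itemize}
I expect this enumeration, which is a handful of cases up to global reversal, to be routine but tedious.

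Finally we prove $\mu_i(Q)$ is vortex-free. A vortex in $\mu_i(Q)$ contains a cyclic triple, which must contain $i$ as its descent. We then handle the apex $a$ in two cases.
\begin{itemize}
\item If $a\neq i$, we pull back along $\mu_i$ and obtain, in $Q$, a $4$-vertex configuration in which $i$ is either a descent in some cyclic triple (contradicting F1) or the triple was a vortex with apex $i$ (contradicting F3).
\item If the apex is $i$ itself, this is impossible, since $i$ lies in a cyclic triple of the vortex.
\end{itemize}
Rather than citing the earlier analysis, this step can be done by the same $4$-vertex case enumeration, and I would organize both the acyclicity and vortex-freeness checks in a single table of $4$-vertex configurations around $i$.
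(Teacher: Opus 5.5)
Your proof is correct. The paper gives no argument of its own here and defers to \cite[Section~6]{FN25}; your route, the one-step claim that F1--F3 at $i$ makes $\mu_i(Q)$ abundant, vortex-free, with global descent at $i$, iterated via \cref{lem:basic-f1-f3} and reducedness, is essentially the argument behind that citation.

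Your $4$-vertex enumeration reduces to one observation: in $\mu_i(Q)$, every in-neighbour of $i$ in $Q$ points to every out-neighbour (by elbow growth, or by F1 in the cyclic case). This gives acyclicity of the triples avoiding $i$, using F3 when all three lie on one side of $i$, and it directly rules out a vortex whose apex is not $i$. Also drop the parenthetical ``or becomes one with $i$ as elbow'', since a source of the triple simply becomes a sink.
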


\begin{lemma}\label{lem:f1-f3-exit}
Let $Q$ be an abundant quiver and let $v\in V(Q)$. If $(Q,v)$ satisfies
F1--F3, then $v$ is an exit in $Q$.
\end{lemma}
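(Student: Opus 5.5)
The plan is to combine \cref{lem:mutation-equivalent-nucleus} with the abundance preserved along reduced sequences. Fix a reduced sequence $\mathbf j=(j_1,\ldots,j_\ell)$ with $\ell\ge1$ and $j_1=v$. Suppose, for a contradiction, that $\mu_{\mathbf j}(Q)=Q$. The goal is to derive a contradiction in two cases: either $Q$ is not itself a nucleus whose global descent sits at the last mutated vertex, or $v$ fails a condition it is assumed to satisfy.

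\textbf{Step 1.} Since $(Q,v)=(Q,j_1)$ satisfies F1--F3, \cref{lem:mutation-equivalent-nucleus} applies directly. It shows that $\mu_{\mathbf j}(Q)$ is a nucleus with global descent at $j_\ell$. Because $\mu_{\mathbf j}(Q)=Q$, the quiver $Q$ itself is then a nucleus with global descent at $j_\ell$.

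\textbf{Step 2.} Extend the cycle by one more step. The sequence $\mathbf j'=(j_1,\ldots,j_\ell,j_1)$ is still reduced provided $j_\ell\ne j_1$. Applying \cref{lem:mutation-equivalent-nucleus} to $\mathbf j'$ shows that $\mu_{j_1}(Q)$ is a nucleus with global descent at $j_1=v$. Then $Q=\mu_v(\mu_v(Q))$ is obtained from a nucleus by mutating at its global descent $v$.

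\textbf{Step 3.} Compare the two pictures of $Q$. By Step 1, $Q$ is a nucleus with global descent at $j_\ell$. If $j_\ell\ne v$, then \cref{lem:basic-f1-f3} (nucleus case) says $v$ satisfies F1--F3 there, which is consistent, so Step 1 alone gives no contradiction. The contradiction has to come from Step 2 instead. We need the following fact: mutating a nucleus at its global descent $g$ produces a quiver in which $g$ is a descent of some cyclic triple. Concretely, for $d\in D$ and $u\in U$, the triple $\{d,g,u\}$ is cyclic with descent $g$ by \cref{lem:global-descent-structure}. Mutating at $g$ lowers the weight $b_{ud}$, and I expect the triple to remain cyclic with $g$ now an ascent, or else to become acyclic. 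I would try to show instead that $v$ is a source or sink, or the apex of a vortex, in $Q$. Failing that, I would use \cref{lem:rank-three-descent} to show that in $Q$ the vertex $v$ is a descent in some cyclic triple, contradicting F1.

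\textbf{Case $j_\ell=v$.} Step 1 makes $Q$ a nucleus with global descent $v$. Then $v$ is the descent of every cyclic subquiver of $Q$, and such subquivers exist by definition. This contradicts F1 for $v$ immediately.

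The main obstacle is the case $j_\ell\ne j_1$ in Step 3: ruling out that $\mu_v$ of a nucleus with descent $v$ could return to a quiver where $v$ satisfies F1--F3. Rather than push that computation through, I would first try a cleaner route: apply \cref{lem:mutation-equivalent-nucleus} to the cyclic rotations of $\mathbf j$, and pair this with the fact (via \cref{rem:delete-global-descent}) that global descents are unique. This should force $j_\ell=v$ and reduce everything to the easy case above.
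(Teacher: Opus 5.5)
Your argument settles only the case $j_\ell=v$, where $v$ would be the global descent of $Q$ and F1 fails. The case $j_\ell\ne v$, which you identify as the main obstacle, is left open, and neither route you suggest can close it.

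The fact you say you need in Step~3 is false. More fundamentally, Step~2 carries no information. By \cref{lem:mutation-equivalent-nucleus} with $m=1$, every pair $(Q,v)$ satisfying F1--F3 has $\mu_v(Q)$ a nucleus with global descent at $v$. So ``$Q$ is obtained from a nucleus by mutating at its global descent $v$'' holds whether or not a cycle exists, and you cannot derive from it that $v$ violates F1, F2, or F3. Concretely, in a nucleus $P$ with global descent $g$, the vertex $g$ is a sink of each triangle $\{d,d',g\}$, a source of each $\{u,u',g\}$, and the descent of each cyclic $\{d,g,u\}$. Since mutation is an involution, in $\mu_g(P)$ it is an ascent of every $\{d,g,u\}$ and a descent of nothing.

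The rotation route fails similarly. For $Q^t=\mu_{(j_1,\ldots,j_t)}(Q)$, \cref{lem:basic-f1-f3} does show that $(Q^t,j_{t+1})$ satisfies F1--F3. But \cref{lem:mutation-equivalent-nucleus} then only returns that $Q^t$ has global descent at $j_t$, which the forward sequence already gave, so two global descents never collide. Statements of the form ``each $Q^t$ is a nucleus with global descent at $j_t$'' are mutually consistent, so some monotone quantity is required.

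The missing idea is strict growth of the total weight $W(X)=\sum_{\{x,y\}}\lvert b_{xy}(X)\rvert$. This is the mechanism behind the Fomin--Neville criterion that the paper imports rather than reproves; the introduction calls it the exit criterion based on strict growth of the sum of edge weights.

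Mutation at $j_t$ does not change the weights at $j_t$. For $y,z\ne j_t$, the weight $\lvert b_{yz}\rvert$ is unchanged if $j_t$ is a source or sink of the triangle $\{j_t,y,z\}$. It strictly increases if $j_t$ is the elbow of that triangle or an ascent of it when it is cyclic.

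In $Q^{t-1}$ the vertex $j_t$ is an ascent in every cyclic triangle containing it and is neither a source nor a sink. For $t=1$ this is F1--F2. For $t\ge2$ it holds because $Q^{t-1}$ is a nucleus whose global descent is $j_{t-1}\ne j_t$ (\cref{lem:rank-three-descent}(ii) and \cref{rem:nucleus-no-source-sink}). A pair $y\to j_t\to z$ then gives a strict increase. Hence $W(Q)<W(Q^1)<\cdots<W(Q^\ell)$ and $\mu_{\mathbf j}(Q)\ne Q$. F3 enters only through \cref{lem:mutation-equivalent-nucleus}, which keeps the intermediate quivers nuclei.
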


In Sections~\ref{sec:mutation-properties} and~\ref{sec:cycle}, we mainly use
the following two special types of nucleus.

\begin{definition}\label{def:special-nucleus-types}
Let $Q|_N$ be a full subquiver, and let $N=D\sqcup\{e\}\sqcup U$
with $D,U\ne\varnothing$.
\begin{enumerate}[label=(\roman*),leftmargin=2.2em,itemsep=4pt]
\item We call $Q|_N$ a \emph{$1$-nucleus} if $\mu_e(Q|_N)$ is abundant
and acyclic with orientation
\[
\begin{tikzpicture}[>=stealth,scale=.68,baseline=-.5ex,
  every node/.style={font=\small,inner sep=1pt}]
  \coordinate (u) at (-1.05,0);
  \coordinate (e) at (0,0.82);
  \coordinate (d) at (1.05,0);
  \draw[->,shorten >=4pt,shorten <=4pt] (u)--(e);
  \draw[->,shorten >=4pt,shorten <=4pt] (e)--(d);
  \draw[->,shorten >=4pt,shorten <=4pt] (u)--(d);
  \node[fill=none] at (u) {$U$};
  \node[fill=none] at (e) {$e$};
  \node[fill=none] at (d) {$D$};
\end{tikzpicture}.
\]
\item For an integer $k\geq1$, we call $Q|_N$ a \emph{$2k$-nucleus} if
there is a vertex $w\in N\setminus\{e\}$ such that $U=\{w\}$ or $D=\{w\}$,
and $\mu_{(e,w)^k}(Q|_N)$ is abundant and acyclic with the corresponding orientation
\[
\begin{tikzpicture}[>=stealth,scale=.68,baseline=-.5ex,
  every node/.style={font=\small,inner sep=1pt}]
  \coordinate (e) at (-1.05,0);
  \coordinate (w) at (0,0.82);
  \coordinate (d) at (1.05,0);
  \draw[->,shorten >=4pt,shorten <=4pt] (e)--(w);
  \draw[->,shorten >=4pt,shorten <=4pt] (w)--(d);
  \draw[->,shorten >=4pt,shorten <=4pt] (e)--(d);
  \node[fill=none] at (e) {$e$};
  \node[fill=none] at (w) {$w$};
  \node[fill=none] at (d) {$D$};
\end{tikzpicture}\quad\text{if }U=\{w\}
\qquad\text{or}\qquad
\begin{tikzpicture}[>=stealth,scale=.68,baseline=-.5ex,
  every node/.style={font=\small,inner sep=1pt}]
  \coordinate (u) at (-1.05,0);
  \coordinate (w) at (0,0.82);
  \coordinate (e) at (1.05,0);
  \draw[->,shorten >=4pt,shorten <=4pt] (u)--(w);
  \draw[->,shorten >=4pt,shorten <=4pt] (w)--(e);
  \draw[->,shorten >=4pt,shorten <=4pt] (u)--(e);
  \node[fill=none] at (u) {$U$};
  \node[fill=none] at (w) {$w$};
  \node[fill=none] at (e) {$e$};
\end{tikzpicture}\quad\text{if }D=\{w\}.
\]
The vertex $w$ is called the \emph{alternating vertex}.
\end{enumerate}
\end{definition}

By \cref{lem:basic-f1-f3,lem:mutation-equivalent-nucleus}, in both cases
$Q|_N$ is a nucleus with global descent at $e$ and nucleus decomposition
$N=D\sqcup\{e\}\sqcup U$.

\Needspace{12\baselineskip}
In general, an abundant quiver need not have a unique closed nucleus.

\begin{example}\label{ex:two-based-nucleus}
Let $Q$ be the abundant quiver shown below. Black arrows have weight $2$,
and blue arrows have weight $6$.
\begin{center}
\begin{tikzpicture}[>=stealth,scale=.93,
  every node/.style={font=\small,inner sep=1.2pt},draw=black]
  \coordinate (v1) at (-2.05,1.25);
  \coordinate (v2) at (-3.00,0);
  \coordinate (v3) at (-2.05,-1.25);
  \coordinate (v4) at (2.05,1.25);
  \coordinate (v5) at (3.00,0);
  \coordinate (v6) at (2.05,-1.25);

  \draw[->,solid,shorten >=5pt,shorten <=5pt]
    (v1)--(v4);
  \draw[->,solid,shorten >=5pt,shorten <=5pt]
    (v1)--(v5);
  \draw[->,solid,shorten >=5pt,shorten <=5pt]
    (v1)--(v6);
  \draw[->,solid,shorten >=5pt,shorten <=5pt]
    (v2)--(v4);
  \draw[->,solid,shorten >=5pt,shorten <=5pt]
    (v2)--(v5);
  \draw[->,solid,shorten >=5pt,shorten <=5pt]
    (v2)--(v6);
  \draw[->,solid,shorten >=5pt,shorten <=5pt]
    (v3)--(v4);
  \draw[->,solid,shorten >=5pt,shorten <=5pt]
    (v3)--(v5);
  \draw[->,solid,shorten >=5pt,shorten <=5pt]
    (v3)--(v6);

  \draw[->,shorten >=5pt,shorten <=5pt] (v1)--(v2);
  \draw[->,shorten >=5pt,shorten <=5pt] (v2)--(v3);
  \draw[->,blue,shorten >=5pt,shorten <=5pt] (v3)--(v1);
  \draw[->,shorten >=5pt,shorten <=5pt] (v4)--(v5);
  \draw[->,shorten >=5pt,shorten <=5pt] (v5)--(v6);
  \draw[->,blue,shorten >=5pt,shorten <=5pt] (v6)--(v4);

  \node[fill=none] at (v1) {$1$};
  \node[fill=none] at (v2) {$2$};
  \node[fill=none] at (v3) {$3$};
  \node[fill=none] at (v4) {$4$};
  \node[fill=none] at (v5) {$5$};
  \node[fill=none] at (v6) {$6$};
\end{tikzpicture}
\end{center}
Each of the full subquivers $Q|_{\{1,2,3\}}$ and $Q|_{\{4,5,6\}}$ is a
closed $1$-nucleus of $Q$.
\end{example}

\Needspace{13\baselineskip}
\subsection{Mutation statuses}\label{subsec:status-unique-nucleus}

We now introduce three mutation statuses and prove that any quiver with one
of these statuses has a unique closed nucleus.

\begin{definition}\label{def:classes}
Let $Q$ be an abundant quiver with a closed nucleus decomposition
$N=D\sqcup\{e\}\sqcup U$. We define three mutation statuses as follows.
\begin{enumerate}[label=(\roman*),leftmargin=2.2em,itemsep=2pt]
\item \emph{$\Rclass$-status}.
We write $Q\in\Rclass$ and call $r$ the \emph{return} of $Q$ if
$Q|_{\{e\}^C}$, $Q|_{D^C}$, and $Q|_{U^C}$ are each a nucleus of $Q$ with a
common global descent at $r$.
\item \emph{$\Sclass$-status}.
We write $Q\in\Sclass$ if $Q|_{\{e\}^C}$, $Q|_{D^C}$, and $Q|_{U^C}$
are acyclic and $Q$ has either a source but no sink, or a sink but no source.
\item \emph{$\SSclass$-status}.
We write $Q\in\SSclass$ if $Q|_{\{e\}^C}$, $Q|_{D^C}$, and $Q|_{U^C}$
are acyclic and $Q$ has a source and a sink.
\end{enumerate}
\end{definition}

If the closed nucleus is a $1$-nucleus, we write $\Rclass_1$, $\Sclass_1$,
and $\SSclass_1$; if it is a $2k$-nucleus, we write $\Rclass_{2k}$,
$\Sclass_{2k}$, and $\SSclass_{2k}$.

\begin{remark}\label{rem:global-reversal}
Global reversal preserves the mutation status and nucleus type. It
interchanges $D$ and $U$, swaps sources and sinks, and leaves the return in
the $\Rclass$-case unchanged.
\end{remark}

\begin{example}\label{ex:mutation-statuses}
Let $N=\{1,2,3\}$, $e=2$, $D=\{1\}$, and $U=\{3\}$. Blue arrows have weight $6$, and black arrows have weight $2$. Then
\begin{center}
\setlength{\tabcolsep}{2pt}
\begin{tabular}{@{}c@{\quad}c@{\quad}c@{\quad}c@{\quad}c@{}}
\begin{tikzpicture}[>=stealth,scale=.84,baseline=-.5ex,
  every node/.style={font=\scriptsize,inner sep=1pt}]
  \coordinate (v4) at (-1.70,0);
  \coordinate (v1) at (0,1.10);
  \coordinate (v2) at (-.45,0);
  \coordinate (v3) at (0,-1.10);
  \coordinate (v5) at (1.70,0);
  \draw[->,shorten >=4pt,shorten <=4pt] (v1)--(v2);
  \draw[->,shorten >=4pt,shorten <=4pt] (v2)--(v3);
  \draw[->,shorten >=4pt,shorten <=4pt] (v1)--(v4);
  \draw[->,shorten >=4pt,shorten <=4pt] (v2)--(v4);
  \draw[->,shorten >=4pt,shorten <=4pt] (v3)--(v4);
  \draw[->,shorten >=4pt,shorten <=4pt] (v5)--(v1);
  \draw[->,shorten >=4pt,shorten <=4pt] (v5)--(v2);
  \draw[->,shorten >=4pt,shorten <=4pt] (v5)--(v3);
  \draw[->,shorten >=4pt,shorten <=4pt] (v5) to[bend left=28]
    (v4);
  \draw[->,blue,shorten >=4pt,shorten <=4pt] (v3) to[bend right=48]
    (v1);
  \node[fill=none] at (v1) {$1$};
  \node[fill=none] at (v2) {$2$};
  \node[fill=none] at (v3) {$3$};
  \node[fill=none] at (v4) {$4$};
  \node[fill=none] at (v5) {$5$};
\end{tikzpicture}
& $\mathrel{\overset{\mu_5}{\rule{0.8cm}{0.4pt}}}$ &
\begin{tikzpicture}[>=stealth,scale=.84,baseline=-.5ex,
  every node/.style={font=\scriptsize,inner sep=1pt}]
  \coordinate (v4) at (-1.70,0);
  \coordinate (v1) at (0,1.10);
  \coordinate (v2) at (-.45,0);
  \coordinate (v3) at (0,-1.10);
  \coordinate (v5) at (1.70,0);
  \draw[->,shorten >=4pt,shorten <=4pt] (v1)--(v2);
  \draw[->,shorten >=4pt,shorten <=4pt] (v2)--(v3);
  \draw[->,shorten >=4pt,shorten <=4pt] (v1)--(v4);
  \draw[->,shorten >=4pt,shorten <=4pt] (v2)--(v4);
  \draw[->,shorten >=4pt,shorten <=4pt] (v3)--(v4);
  \draw[->,shorten >=4pt,shorten <=4pt] (v1)--(v5);
  \draw[->,shorten >=4pt,shorten <=4pt] (v2)--(v5);
  \draw[->,shorten >=4pt,shorten <=4pt] (v3)--(v5);
  \draw[->,shorten >=4pt,shorten <=4pt] (v4) to[bend right=28]
    (v5);
  \draw[->,blue,shorten >=4pt,shorten <=4pt] (v3) to[bend right=48]
    (v1);
  \node[fill=none] at (v1) {$1$};
  \node[fill=none] at (v2) {$2$};
  \node[fill=none] at (v3) {$3$};
  \node[fill=none] at (v4) {$4$};
  \node[fill=none] at (v5) {$5$};
\end{tikzpicture}
& $\mathrel{\overset{\mu_4}{\rule{0.8cm}{0.4pt}}}$ &
\begin{tikzpicture}[>=stealth,scale=.84,baseline=-.5ex,
  every node/.style={font=\scriptsize,inner sep=1pt}]
  \coordinate (v4) at (-1.70,0);
  \coordinate (v1) at (0,1.10);
  \coordinate (v2) at (-.45,0);
  \coordinate (v3) at (0,-1.10);
  \coordinate (v5) at (1.70,0);
  \draw[->,shorten >=4pt,shorten <=4pt] (v1)--(v2);
  \draw[->,shorten >=4pt,shorten <=4pt] (v2)--(v3);
  \draw[->,shorten >=4pt,shorten <=4pt] (v4)--(v1);
  \draw[->,shorten >=4pt,shorten <=4pt] (v4)--(v2);
  \draw[->,shorten >=4pt,shorten <=4pt] (v4)--(v3);
  \draw[->,shorten >=4pt,shorten <=4pt] (v5) to[bend left=28]
    (v4);
  \draw[->,blue,shorten >=4pt,shorten <=4pt] (v3) to[bend right=48]
    (v1);
  \draw[->,blue,shorten >=4pt,shorten <=4pt] (v1)--(v5);
  \draw[->,blue,shorten >=4pt,shorten <=4pt] (v2)--(v5);
  \draw[->,blue,shorten >=4pt,shorten <=4pt] (v3)--(v5);
  \node[fill=none] at (v1) {$1$};
  \node[fill=none] at (v2) {$2$};
  \node[fill=none] at (v3) {$3$};
  \node[fill=none] at (v4) {$4$};
  \node[fill=none] at (v5) {$5$};
\end{tikzpicture}
\\[-1pt]
$\SSclass$-status
&& $\Sclass$-status
&& $\Rclass$-status
\end{tabular}
\end{center}
\end{example}

We first describe the cyclic subquivers and their descents relative to a
chosen closed nucleus.

\begin{proposition}\label{prop:cyclic-subquivers}
Let $Q\in\Rclass\cup\Sclass\cup\SSclass$ with a closed nucleus decomposition
$N=D\sqcup\{e\}\sqcup U$.
\begin{enumerate}[label=(\roman*),leftmargin=2.2em,itemsep=2pt]
\item If $Q\in\Rclass$ and $Q|_{\{a,b,c\}}$ is cyclic, then
$Q|_{\{a,b,c\}}$ has descent $e$ if $\{a,b,c\}\subseteq N$, and has descent
$r$ otherwise.
\item If $Q\in\Sclass\cup\SSclass$ and $Q|_{\{a,b,c\}}$ is cyclic, then
$\{a,b,c\}\subseteq N$ and $Q|_{\{a,b,c\}}$ has descent $e$.
\end{enumerate}
\end{proposition}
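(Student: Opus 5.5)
The plan is to sort a cyclic triple $T=\{a,b,c\}$ according to how it meets the parts $\{e\}$, $D$, $U$ of the closed nucleus decomposition. The key observation is combinatorial: a $3$-element set either lies inside $N$, or contains a vertex outside $N$. In the second case, $T$ meets at most two of the three parts $\{e\},D,U$. Hence $T$ is contained in at least one of the vertex sets $\{e\}^C$, $D^C$, $U^C$. Indeed, if $T$ missed none of these sets, it would contain $e$, a vertex of $D$, a vertex of $U$, and the outside vertex, which is four vertices. The same holds when $T\subseteq N$ but $T$ misses one of the three parts.

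For (ii), let $Q\in\Sclass\cup\SSclass$. The three subquivers $Q|_{\{e\}^C}$, $Q|_{D^C}$, $Q|_{U^C}$ are acyclic. So a cyclic triple cannot lie in any of them, and by the observation above it must consist of exactly one vertex from each of $\{e\}$, $D$, $U$. In particular $T\subseteq N$. Since $Q|_N$ is a nucleus with global descent $e$, every cyclic subquiver of $Q|_N$ has descent $e$; as $T\subseteq N$ and $Q|_T$ is a full subquiver of $Q|_N$, its descent is $e$.

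For (i), let $Q\in\Rclass$. If $T\subseteq N$, the descent is $e$, exactly as in (ii), by the global descent of the nucleus $Q|_N$. If $T\not\subseteq N$, then $T$ is contained in one of $\{e\}^C$, $D^C$, $U^C$. Each of the corresponding full subquivers is a nucleus with global descent at $r$, so every cyclic subquiver inside it has descent $r$. In particular $Q|_T$ has descent $r$.

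I do not expect a real obstacle. The only point requiring care is the counting argument: a triple not contained in $N$ lies in some complement set, and a cyclic triple not contained in any complement set must be a transversal $\{e,d,u\}$. One should also note that descents are intrinsic to the $3$-vertex subquiver $Q|_T$, so "descent in the ambient nucleus" and "descent in $Q$" agree. Closedness of the nucleus is not even needed here.
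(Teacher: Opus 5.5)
Your proposal is correct and follows essentially the same route as the paper: a vertex outside $N$ forces the triple to miss one of $\{e\}$, $D$, $U$, so the triple lies in one of $Q|_{\{e\}^C}$, $Q|_{D^C}$, $Q|_{U^C}$, and these have global descent $r$ (the $\Rclass$-case) or are acyclic (the $\Sclass$- and $\SSclass$-cases). You also spell out the case $\{a,b,c\}\subseteq N$, where the descent is $e$ by the nucleus's global descent; the paper leaves this implicit. Your remark that closedness of the nucleus is not used is accurate.
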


\begin{proof}
Suppose that $\{a,b,c\}\nsubseteq N$. Without loss of generality, assume
$a\notin N$. If $e\notin\{a,b,c\}$, then $Q|_{\{a,b,c\}}$ is a full
subquiver of $Q|_{\{e\}^C}$. In the $\Rclass$-case,
$Q|_{\{a,b,c\}}$ has descent $r$, since $Q|_{\{e\}^C}$ has a global descent at $r$.
Similarly, if $\{a,b,c\}\cap D=\varnothing$ or
$\{a,b,c\}\cap U=\varnothing$, then $Q|_{\{a,b,c\}}$ has descent $r$ by considering
$Q|_{D^C}$ or $Q|_{U^C}$, respectively. Since $a\notin N$, at least one of
these three cases occurs. This proves (i). In the $\Sclass$- and
$\SSclass$-cases, $Q|_{\{a,b,c\}}$ is contained in one of the acyclic quivers
$Q|_{\{e\}^C}$, $Q|_{D^C}$, and $Q|_{U^C}$, contradicting its cyclicity.
Thus (ii) follows.
\end{proof}

\begin{theorem}\label{thm:nucleus-uniqueness}
Let $Q\in\Rclass\cup\Sclass\cup\SSclass$ with closed nucleus $Q|_N$. Then $Q$ has a unique closed nucleus.
If $Q\in\Rclass$, then its return is also unique.
\end{theorem}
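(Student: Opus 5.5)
The plan is to use \cref{prop:cyclic-subquivers} to pin down the global descent of any second closed nucleus, and then use closedness to recover its vertex set. Let $Q|_{N'}$ be a second closed nucleus of $Q$, with global descent $e'$. By \cref{def:global-descent}, $Q|_{N'}$ contains at least one cyclic subquiver $Q|_{\{a,b,c\}}$, and its descent is $e'$.

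\textbf{Step 1: the $\Sclass$- and $\SSclass$-cases.} If $Q\in\Sclass\cup\SSclass$, then \cref{prop:cyclic-subquivers}(ii) gives $\{a,b,c\}\subseteq N$ with descent $e$, so $e'=e$. The defining property of a closed nucleus in \cref{def:nucleus} expresses $N$ purely in terms of $Q$ and the vertex $e$. Indeed, $N$ is the set of vertices of cyclic subquivers of $Q$ with descent $e$. Therefore $e'=e$ forces $N'=N$, and uniqueness follows.

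\textbf{Step 2: the $\Rclass$-case, reduction to one bad case.} If $Q\in\Rclass$, \cref{prop:cyclic-subquivers}(i) shows that the descent $e'$ of $Q|_{\{a,b,c\}}$ is either $e$ or $r$. We have $r\neq e$, because $r$ is a vertex of $Q|_{\{e\}^C}$. If $e'=e$, the closedness argument of Step~1 gives $N'=N$. So it remains to rule out $e'=r$.

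\textbf{Step 3: ruling out $e'=r$ (the main obstacle).} Suppose $e'=r$. Then every cyclic subquiver of $Q|_{N'}$ has descent $r$. By \cref{prop:cyclic-subquivers}(i), such a subquiver is therefore not contained in $N$, and $N'$ is exactly the set of vertices of cyclic triples of $Q$ not contained in $N$. The goal is to produce three vertices of $N'$ that lie in $N$ and form a cyclic triple. That triple would have descent $e\neq r$, contradicting the fact that $Q|_{N'}$ has global descent $r$.

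\begin{itemize}
\item[(a)] First I would show that $e\in N'$. The quiver $Q|_{D^C}$ is a nucleus with global descent $r$ and contains $e$. By \cref{rem:nucleus-no-source-sink}, $e$ is neither a source nor a sink there. Using abundance and the decomposition of $Q|_{D^C}$ relative to $r$ (via \cref{lem:global-descent-structure} applied to $Q|_{D^C}$), I would place $e$ in a cyclic triple with descent $r$. The same argument applied to $Q|_{U^C}$ and $Q|_{\{e\}^C}$ should show that $N'$ meets both $D$ and $U$.
\item[(b)] Then take $d\in N'\cap D$ and $u\in N'\cap U$. By \cref{lem:global-descent-structure} for $Q|_N$, the triple $Q|_{\{d,e,u\}}$ is cyclic with descent $e$, and it lies inside $N'$. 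This is the desired contradiction.
\end{itemize}

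This step requires case analysis with vortex-freeness and abundance of the three nuclei $Q|_{\{e\}^C}$, $Q|_{D^C}$ and $Q|_{U^C}$. I expect it to be the hardest part, in the same spirit as the proof of \cref{lem:global-descent-structure}. The idea is to adjoin a vertex of $N$ to a cyclic triple $\{r,x,y\}$ and exclude each orientation either because it creates a forbidden cyclic triple with the wrong descent or because it creates a vortex.

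\textbf{Step 4: uniqueness of the return.} Once $N$, and hence $e$, $D$ and $U$, are unique, $Q|_{\{e\}^C}$ is determined. Its global descent is unique: it contains a cyclic subquiver, since it is a nucleus, and the descent of that subquiver is unique by \cref{lem:rank-three-descent}. Hence the return $r$ is unique.
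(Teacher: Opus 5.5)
Your proposal is correct and follows essentially the paper's proof. Both arguments reduce the $\Rclass$-case to a second closed nucleus with global descent $r$. They then use closedness, together with the nuclei $Q|_{\{e\}^C}$ and $Q|_{D^C}$, to force $N'$ to contain $e$ and vertices of $D$ and $U$. The contradiction comes from the cyclic triple $\{d,e,u\}$ with descent $e$.

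Your Step~3(a) is immediate and needs no vortex case analysis. \Cref{lem:global-descent-structure}, applied to a nucleus with global descent $r$, already places every vertex other than $r$ in a cyclic triple with descent $r$; this is exactly the paper's step. Your argument for uniqueness of the return via \cref{lem:rank-three-descent} is an equivalent variant of the paper's appeal to \cref{prop:cyclic-subquivers}.
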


\begin{proof}
\emph{The $\Sclass$- and $\SSclass$-cases.} By
\cref{prop:cyclic-subquivers}, every cyclic subquiver of $Q$ has descent $e$.
Hence the closed nucleus is unique.

\emph{The $\Rclass$-case.} Let $r$ be the return.
If a closed nucleus $Q|_{N'}$ distinct from $Q|_N$ exists, then
\cref{prop:cyclic-subquivers} shows that $Q|_{N'}$ has global descent at $r$.
By \cref{def:classes}, $Q|_{\{e\}^C}$ is a nucleus with global descent at $r$.
For any $v\in\{e\}^C\setminus\{r\}$, $\exists\,u\in\{e\}^C$ such that $Q|_{\{v,r,u\}}$ is cyclic with descent $r$.
Since $Q|_{N'}$ is closed and $r\in N'$, it follows that $\{e\}^C\subseteq N'$.
Similarly, $Q|_{D^C}$ is a nucleus with global descent at $r$, then $e\in N'$. Thus $V(Q)=\{e\}^C\cup\{e\}\subseteq N'$.
Since $N'\subseteq V(Q)$, we have $N'=V(Q)$. But for any $d\in D$ and $u\in U$,
$Q|_{\{d,e,u\}}$ is cyclic with descent $e$, contradicting that $Q|_{N'}$
has global descent at $r$. Thus the closed nucleus is unique.

By \cref{prop:cyclic-subquivers}, the return is the only other descent occurring
in a cyclic subquiver. Hence the return is unique.
\end{proof}

For $Q\in\Rclass\cup\Sclass\cup\SSclass$, we write $N=D\sqcup\{e\}\sqcup U$
for its closed nucleus decomposition and $r$ for its return in the
$\Rclass$-case.

We next describe the arrows between the closed nucleus and the remaining
vertices.

\begin{proposition}\label{prop:npm-decomposition}
Let $Q\in\Rclass\cup\Sclass\cup\SSclass$. Define
\[
 \nminus:=\{v\in N^C:v\longrightarrow N\},\qquad
 \nplus:=\{v\in N^C:N\longrightarrow v\}.
\]
Then
\[
 V(Q)=N\sqcup\nminus\sqcup\nplus.
\]
\end{proposition}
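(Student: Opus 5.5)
The plan is to show that every $v\in N^C$ is joined to all of $N$ in one direction, either $v\to N$ or $N\to v$. Since $Q$ is abundant, $v$ is joined to every vertex of $N$ by an arrow in some direction, so the only thing to exclude is a \emph{mixed} vertex: one with $v\to x$ and $y\to v$ for some $x,y\in N$. Once no vertex of $N^C$ is mixed, the disjoint decomposition $V(Q)=N\sqcup\nminus\sqcup\nplus$ follows at once. By \cref{rem:global-reversal} we may assume $v\to e$; the case $e\to v$ is the global reversal. We then have to exclude $u\to v$ for some $u\in U$, and $d\to v$ for some $d\in D$.

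\emph{Step 1: a mixed vertex lies in a cyclic triangle with $e$.}
If $v\to e$ and $u\to v$ with $u\in U$, then $v\to e\to u\to v$ is a cyclic subquiver $Q|_{\{v,e,u\}}$. If $d\to v$ with $d\in D$, I will compare $v$ with a vertex $u\in U$. If $u\to v$, we are in the previous case. If $v\to u$, then $d\to v\to u\to d$ is cyclic, using $U\to D$ from \cref{lem:global-descent-structure}. So in every case some cyclic triangle contains $v\notin N$. In the $\Sclass$- and $\SSclass$-cases this already contradicts \cref{prop:cyclic-subquivers}(ii), so only the $\Rclass$-case remains.

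\emph{Step 2: the $\Rclass$-case.}
First I note that $r\notin N$. Indeed, $r$ is a vertex of each of $Q|_{\{e\}^C}$, $Q|_{D^C}$ and $Q|_{U^C}$, so $r\notin\{e\}\cup D\cup U$. The cyclic triangle $T$ found in Step 1 is not contained in $N$, so by \cref{prop:cyclic-subquivers}(i) it has descent $r$. Also $T$ omits $D$ or omits $U$, so it lies in $Q|_{D^C}$ or $Q|_{U^C}$, and hence $r\in T$. Since $r\notin N$ and the only vertex of $T$ outside $N$ is $v$, this forces $r=v$.

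Next I use that $Q|_{D^C}$ (respectively $Q|_{U^C}$) is a vortex-free abundant quiver with global descent at $v$. I look at the $4$-vertex subquiver $Q|_{\{d,e,u,v\}}$. It contains the cyclic triangle $\{d,e,u\}$ with descent $e$ and the cyclic triangle $T$ with descent $v$. I would compare the descent conditions using \cref{lem:rank-three-descent}: the descent lies opposite the unique largest weight, so $|b_{du}|$ is largest in $\{d,e,u\}$ while the edge opposite $v$ is largest in $T$. I expect these weight inequalities to be incompatible with the remaining orientations among $d,e,u,v$, for instance by forcing a third cyclic triangle with the wrong descent.

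As an alternative route, I would replace $u$ by a second vertex $u'\in U$, or $d$ by $d'\in D$, so as to get a $4$-vertex subquiver inside $Q|_{D^C}$ or $Q|_{U^C}$. This is possible when $|U|\ge2$ or $|D|\ge2$. Such a subquiver must be a vortex or contain a cyclic triangle without $v$, contradicting vortex-freeness or the global descent at $v$, in the spirit of the proof of \cref{lem:global-descent-structure}.

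\emph{Main obstacle.}
The hard case is the $\Rclass$-case with $r=v$. Here the purely combinatorial orientation arguments seem insufficient, because the $4$-vertex quiver $\{d,e,u,v\}$ lies in none of the three nuclei $Q|_{\{e\}^C}$, $Q|_{D^C}$, $Q|_{U^C}$. I would first try the weight comparison via \cref{lem:rank-three-descent}. If that fails, I would bring in closedness of $N$: show that $v$ lies in some cyclic triangle with descent $e$, which by the definition of closed nucleus gives $v\in N$, a contradiction.
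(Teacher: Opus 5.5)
Your handling of the $\Sclass$- and $\SSclass$-cases, and of the $\Rclass$-case with $v\neq r$, is correct. The reduction to $v=r$, using $r\notin N$ and \cref{prop:cyclic-subquivers}(i), is sound. One small slip: the triangle $\{d,v,u\}$ does not contain $e$, and it lies in $Q|_{\{e\}^C}$ rather than in $Q|_{D^C}$ or $Q|_{U^C}$. This is harmless, since descent $r$ already forces $r\in T$.

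The genuine gap is the case $Q\in\Rclass$ with $v=r$, which you leave open. The weight comparison in $Q|_{\{d,e,u,r\}}$ is only stated as an expectation. The alternative route needs $|U|\ge2$ or $|D|\ge2$ and is not carried out. The appeal to closedness is not made concrete. As written, the proposition is therefore not proved in this case.

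Your diagnosis that orientation arguments are insufficient is also mistaken. You do not need a $4$-vertex subquiver, only triangles through $r$, each of which lies in one of the three nuclei. Apply \cref{lem:global-descent-structure} to those nuclei, whose global descent is $r$. That lemma says that in a nucleus with global descent $g$, every $x$ with $g\to x$ points to every $y$ with $y\to g$. In your normalization $r\to e$ the argument runs as follows.
\begin{enumerate}
\item Suppose $d\to r$ for some $d\in D$. Then $d\to r\to e$ in the nucleus $Q|_{U^C}$ forces $e\to d$, contradicting $d\to e$. Hence $r\to D$.
\item Suppose $u\to r$ for some $u\in U$. Then $u\to r\to d$ in the nucleus $Q|_{\{e\}^C}$ forces $d\to u$, contradicting $U\to D$. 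Hence $r\to U$.
\end{enumerate}
Together these give $r\to N$. This is exactly how the paper closes the case, in the reversed orientation $e\to r$ and using $Q|_{D^C}$ and then $Q|_{\{e\}^C}$. No weights and no closedness are needed.
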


\begin{proof}
Fix $v\in N^C$. Without loss of generality, assume that $e\to v$.
We show that $N\to v$.
Fix $d\in D$ and $u\in U$.
If $v\to d$, then $Q|_{\{d,e,v\}}\subseteq Q|_{U^C}$ is cyclic, so
\cref{prop:cyclic-subquivers} forces $Q\in\Rclass$ and $v=r$. If $v\to u$
and $v\not\to d$, then $d\to v$ and
$Q|_{\{d,u,v\}}\subseteq Q|_{\{e\}^C}$ is cyclic, so
\cref{prop:cyclic-subquivers} forces $Q\in\Rclass$ and $v=r$. Thus $N\to v$
unless $Q\in\Rclass$ and $v=r$.

It remains to consider $Q\in\Rclass$ and $v=r$. By
\cref{lem:global-descent-structure}, the path $e\to r\to u$ in $Q|_{D^C}$
would force $u\to e$, contrary to $e\to u$, so $u\to r$; the path
$u\to r\to d$ in $Q|_{\{e\}^C}$ would force $d\to u$, contrary to
$u\to d$, so $d\to r$; see \cref{fig:npm-return}.

\enlargethispage{1.5\baselineskip}
\begin{figure}[H]
\centering
\setlength{\tabcolsep}{15pt}
\begin{tabular}{cc}
\begin{tikzpicture}[>=stealth,scale=.84,
  every node/.style={font=\small,inner sep=.8pt}]
  \coordinate (d) at (-1.15,0.78);
  \coordinate (e) at (0,1.48);
  \coordinate (u) at (1.15,0.78);
  \coordinate (r) at (0,-0.28);

  \draw[->,shorten >=4pt,shorten <=4pt] (d)--(e);
  \draw[->,shorten >=4pt,shorten <=4pt] (e)--(u);
  \draw[->,shorten >=4pt,shorten <=4pt] (u)--(d);
  \draw[->,shorten >=4pt,shorten <=4pt] (e)--(r);
  \draw[->,red,thick,dashed,shorten >=4pt,shorten <=4pt] (r)--(u);
  \draw[dashed,shorten >=4pt,shorten <=4pt] (d)--(r);

  \node[fill=none] at (d) {$d$};
  \node[fill=none] at (e) {$e$};
  \node[fill=none] at (u) {$u$};
  \node[fill=none] at (r) {$r$};
\end{tikzpicture}
&
\begin{tikzpicture}[>=stealth,scale=.84,
  every node/.style={font=\small,inner sep=.8pt}]
  \coordinate (d) at (-1.15,0.78);
  \coordinate (e) at (0,1.48);
  \coordinate (u) at (1.15,0.78);
  \coordinate (r) at (0,-0.28);

  \draw[->,shorten >=4pt,shorten <=4pt] (d)--(e);
  \draw[->,shorten >=4pt,shorten <=4pt] (e)--(u);
  \draw[->,shorten >=4pt,shorten <=4pt] (u)--(d);
  \draw[->,shorten >=4pt,shorten <=4pt] (e)--(r);
  \draw[->,red,thick,dashed,shorten >=4pt,shorten <=4pt] (u)--(r);
  \draw[->,red,thick,dashed,shorten >=4pt,shorten <=4pt] (r)--(d);

  \node[fill=none] at (d) {$d$};
  \node[fill=none] at (e) {$e$};
  \node[fill=none] at (u) {$u$};
  \node[fill=none] at (r) {$r$};
\end{tikzpicture}
\\[-1mm]
{\small (a) $r\to u$.}
& {\small (b) $r\to d$.}
\end{tabular}
\caption{}
\label{fig:npm-return}
\end{figure}

Since $d\in D$ and $u\in U$ were arbitrary, $N\to r$, as desired.
\end{proof}

\begin{proposition}\label{prop:npm-acyclicity}
Let $Q\in\Rclass\cup\Sclass\cup\SSclass$. Then the full
subquivers $Q|_{\nminus}$ and $Q|_{\nplus}$ are acyclic.
\end{proposition}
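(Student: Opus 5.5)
Suppose $Q|_{\nminus}$ contains an oriented cycle; the case of $\nplus$ then follows by global reversal (\cref{rem:global-reversal}), which interchanges $\nminus$ and $\nplus$. The plan is to reduce to a cyclic triple and then obtain a contradiction from \cref{prop:cyclic-subquivers}.

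\emph{Step 1: reduce to a triangle.} We may assume $Q$ is abundant, so every pair of distinct vertices is joined by an arrow. Hence $Q|_{\nminus}$ is a tournament-like quiver, and an oriented cycle in it can be shortened by a chord to an oriented $3$-cycle. So some triple $\{a,b,c\}\subseteq\nminus$ gives a cyclic subquiver $Q|_{\{a,b,c\}}$. Since $\{a,b,c\}\cap N=\varnothing$, part~(ii) of \cref{prop:cyclic-subquivers} immediately rules out $Q\in\Sclass\cup\SSclass$. It remains to treat $Q\in\Rclass$. There part~(i) says this cyclic triple has descent $r$, so $r\in\{a,b,c\}\subseteq\nminus$.

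\emph{Step 2: the $\Rclass$-case.} Now $r\in\nminus$, so $r\to N$, and in particular $r\to e$. Write the triple as $\{r,x,y\}$ with $r\to x\to y\to r$ after relabelling; either orientation is handled the same way. Choose $d\in D$, $u\in U$. I would look at a $4$-vertex subquiver such as $Q|_{\{r,x,y,e\}}$, which lies inside $Q|_{D^C}$ (and also inside $Q|_{U^C}$). This subquiver is a nucleus with global descent at $r$, hence vortex-free. But $e$ is a sink in $Q|_{\{r,x,y,e\}}$, since $r,x,y\in\nminus$ all point to $e$, while $\{r,x,y\}$ is cyclic. All weights are nonzero by abundance, so $Q|_{\{r,x,y,e\}}$ is a vortex with apex $e$. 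This contradicts vortex-freeness of $Q|_{D^C}$.

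\emph{Main obstacle.} The main point to check is that the ambient full subquiver used in Step~2 is genuinely one of the nuclei in \cref{def:classes}, so that vortex-freeness applies. Since $D\neq\varnothing$ and $\{r,x,y,e\}\cap D=\varnothing$, the quiver $Q|_{\{r,x,y,e\}}$ is indeed a full subquiver of $Q|_{D^C}$. A full subquiver of a vortex-free quiver is vortex-free, because vortex-freeness is a condition on $4$-vertex full subquivers. Abundance of $Q$, needed for the triangle reduction and for nonzero weights, comes from \cref{def:classes}.
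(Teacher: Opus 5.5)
Your proof is correct and follows the paper's argument: you reduce to a cyclic triple, use \cref{prop:cyclic-subquivers} to force $r$ into the triple in the $\Rclass$-case, and then observe that $Q|_{\{r,x,y,e\}}\subseteq Q|_{D^C}$ is a vortex with apex $e$, contradicting vortex-freeness of that nucleus. Your explicit tournament argument, which shortens an oriented cycle in $Q|_{\nminus}$ to a $3$-cycle, fills in a step the paper leaves implicit. Your use of \cref{prop:cyclic-subquivers}(ii) for the $\Sclass$ and $\SSclass$ cases is equivalent to the paper's direct appeal to acyclicity of $Q|_{\{e\}^C}$.
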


\begin{proof}
It suffices to consider $Q|_{\nminus}$. If $Q\in\Sclass\cup\SSclass$, the claim is
immediate from \cref{def:classes}. Now suppose that $Q\in\Rclass$ and
that $Q|_{\nminus}$ is not acyclic. Then it contains a cyclic subquiver,
which by \cref{prop:cyclic-subquivers} has vertex set $\{x,y,r\}$ for some
$x,y\in\nminus$. The full subquiver $Q|_{\{x,y,r,e\}}\subseteq Q|_{D^C}$
is a vortex with apex $e$, contradicting the vortex-freeness of the nucleus
$Q|_{D^C}$. Dually,
$Q|_{\nplus}$ is acyclic.
\end{proof}

\begin{lemma}\label{lem:npm-occupancy}
Let $Q\in\Rclass\cup\Sclass\cup\SSclass$.
\begin{enumerate}[label=\textup{(\roman*)},leftmargin=2.2em,itemsep=2pt]
\item If $Q\in\Rclass\cup\SSclass$, then $\nminus$ and $\nplus$ are nonempty.
\item If $Q\in\Sclass$ has a source, then $\nplus=\varnothing$. If it has a
sink, then $\nminus=\varnothing$.
\end{enumerate}
\end{lemma}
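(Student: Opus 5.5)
We deduce both parts from the partition $V(Q)=N\sqcup\nminus\sqcup\nplus$ of \cref{prop:npm-decomposition}, together with the fact that a nucleus has no source or sink (\cref{rem:nucleus-no-source-sink}).

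\emph{Part (ii).} Suppose $Q\in\Sclass$ has a source $s$, and assume for contradiction that $\nplus\neq\varnothing$. Since $N\to\nplus$ and $Q$ is abundant, no vertex of $\nplus$ is a source. If $s\in N$, then $s$ is a source of $Q|_N$, contradicting \cref{rem:nucleus-no-source-sink}. Hence $s\in\nminus$. By the definition of $\Sclass$, $Q$ has no sink, so we aim to exhibit one. The subquiver $Q|_{\nplus}$ is nonempty and abundant, and it is acyclic by \cref{prop:npm-acyclicity}. It therefore has a unique sink $t$. Every vertex of $N$ points to $t$, since $N\to\nplus$. It remains to check the arrows between $t$ and $\nminus$. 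We expect the main obstacle to lie here: there is no a priori orientation between $\nminus$ and $\nplus$. We would handle it by contradiction. Suppose $t\to x$ for some $x\in\nminus$. Pick $v\in N$; then $x\to v\to t\to x$, so $Q|_{\{x,v,t\}}$ is cyclic and not contained in $N$. This contradicts \cref{prop:cyclic-subquivers}(ii). Hence $\nminus\to t$, and $t$ is a sink of $Q$, which is the desired contradiction. The sink statement follows dually by global reversal, using \cref{rem:global-reversal}.

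\emph{Part (i), $\SSclass$ case.} Let $s$ be a source and $t$ a sink of $Q$. As above, neither lies in $N$. If $s\in\nplus$, then the arrows $N\to s$ contradict $s$ being a source; hence $s\in\nminus$. Dually, $t\in\nplus$. So both sets are nonempty.

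\emph{Part (i), $\Rclass$ case.} Here $Q|_{\{e\}^C}$ is a nucleus with global descent at $r$, and the proof of \cref{prop:npm-decomposition} shows $r\in N^C$. Without loss of generality (by \cref{rem:global-reversal}), assume $r\in\nplus$. We must show $\nminus\neq\varnothing$. Suppose instead $V(Q)=N\sqcup\nplus$. Then $Q|_{D^C}$ has vertex set $\{e\}\cup U\cup\nplus$. Since $Q|_{D^C}$ is a nucleus, it has a cyclic subquiver with descent $r$ containing $e$ (this is how $e$ enters $N'$ in the proof of \cref{thm:nucleus-uniqueness}); call it $Q|_{\{e,y,r\}}$. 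By \cref{prop:cyclic-subquivers}(i), $y$ cannot lie in $N$ together with $e$, since then the triple would contain two vertices of $N$ and $r\in\nplus$, and the arrows $N\to r$ would prevent a cycle. So $y\in\nplus$. But then $e\to y$ and $e\to r$, so $e$ is a source of this triple, and the triple is not cyclic. This contradiction gives $\nminus\neq\varnothing$. The same argument with roles reversed, using $Q|_{U^C}$, covers the case $r\in\nminus$. Together with $r$ itself, this shows both $\nminus$ and $\nplus$ are nonempty.
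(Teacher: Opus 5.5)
Your proof is correct and follows essentially the same route as the paper. The $\SSclass$ case is identical. In the $\Sclass$ case you use the same observation, that $x\to v\to t$ with $v\in N$ cannot close into a cycle outside $N$, which the paper phrases as acyclicity of $Q|_{D^C}$ forcing $\nminus\to\nplus$. In the $\Rclass$ case your triple argument is a local version of the paper's point that $Q|_{D^C}$ cannot be a nucleus when $\nminus=\varnothing$, since $e$ would then be its source.

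Two minor points do not affect correctness. The appeal to \cref{prop:cyclic-subquivers}(i) for excluding $y\in N$ is misplaced; the actual reason is the one you give, that $r$ would be a sink of the triple. The step showing $s\in\nminus$ in part (ii) is not needed.
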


\begin{proof}
\emph{The $\SSclass$-case.} If $Q\in\SSclass$, its source belongs to $\nminus$
and its sink belongs to $\nplus$.

\emph{The $\Rclass$-case.} Without loss of generality, assume that the return
$r\in\nplus$. It remains only to check that $\nminus$ is nonempty.
If $\nminus=\varnothing$, then $Q|_{D^C}$ is acyclic,
contradicting that $Q|_{D^C}$ is a nucleus. Hence $\nminus$ is nonempty.

\emph{The $\Sclass$-case.} Without loss of generality, suppose that $Q$ has a
source $s$. Then $s\in\nminus$. If $\nplus\ne\varnothing$, the acyclicity of
$Q|_{D^C}$ implies $\nminus\to\nplus$. Since
$V(Q)=N\sqcup\nminus\sqcup\nplus$ and $Q|_{\nplus}$ is acyclic, $Q$ has a
sink, a contradiction. Thus $\nplus$ is empty.
\end{proof}

\begin{proposition}\label{prop:basic-class-mutations}
Let $Q\in\Rclass\cup\Sclass\cup\SSclass$. For every $v\in D\cup U$,
the pair $(Q,v)$ satisfies F1--F3, so $v$ is an exit in $Q$.
\end{proposition}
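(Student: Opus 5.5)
By \cref{lem:f1-f3-exit} it suffices to verify F1--F3 for $(Q,v)$ with $v\in D\cup U$. By \cref{rem:global-reversal}, global reversal swaps $D$ and $U$ and preserves F1--F3 (ascents, sources/sinks and vortex apices are reversal-invariant). So I would assume $v=d\in D$ throughout.

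\emph{F2.} Within $N$ we have $d\to e$. Pick any $u\in U$; by \cref{lem:global-descent-structure}, $u\to d$. Thus $d$ has both an incoming and an outgoing arrow, so it is neither a source nor a sink.

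\emph{F1.} Let $Q|_{\{d,b,c\}}$ be cyclic. By \cref{prop:cyclic-subquivers}, its descent is either $e$, when $\{d,b,c\}\subseteq N$, or the return $r$. Since $d\ne e$, and $d\ne r$ because $r\notin N$ by \cref{prop:npm-decomposition}'s proof, the vertex $d$ is not the descent.
\begin{itemize}[leftmargin=2.2em]
\item If the descent exists, \cref{lem:rank-three-descent}(ii) makes $d$ an ascent.
\item Otherwise the triangle has no descent. In that case I argue from the subquiver carrying the triangle. If $\{d,b,c\}\subseteq N$, it lies in the nucleus $Q|_N$, which has global descent $e$, so the triangle must have descent $e$. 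If not, it lies in one of $Q|_{\{e\}^C}$, $Q|_{U^C}$, $Q|_{D^C}$, which are nuclei with global descent $r$, so it has descent $r$.
\end{itemize}
In either case $d$ is an ascent, since by \cref{def:global-descent} every cyclic subquiver of a nucleus has the global descent as its descent.

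\emph{F3.} Suppose $Q|_{\{d,x,y,z\}}$ is a vortex with apex $d$. The triangle $\{x,y,z\}$ is cyclic, so by \cref{prop:cyclic-subquivers} either $\{x,y,z\}\subseteq N$ or it contains $r$ (the $\Rclass$ case).
\begin{itemize}[leftmargin=2.2em]
\item If $\{x,y,z\}\subseteq N$, then $\{d,x,y,z\}\subseteq N$ gives a vortex in $Q|_N$, contradicting vortex-freeness.
\item Otherwise $r\in\{x,y,z\}$. If $e\notin\{x,y,z\}$, the vortex lies in the nucleus $Q|_{\{e\}^C}$, again a contradiction.
\item If $e,r\in\{x,y,z\}$, let $t$ be the third vertex. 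If $t\notin D$, the vortex lies in $Q|_{U^C}$ only when $t\notin U$ as well; so the remaining subcases are $t\in U$ and $t\in D$.
\end{itemize}

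These last cases are the main obstacle, and I would resolve them by orientations. Take $t\in U$. By \cref{prop:npm-decomposition}, $r\in\nplus$ or $\nminus$, so $N\to r$ or $r\to N$. Then $\{e,r,t\}$ has $e\to t$ and $e,t$ both pointing the same way relative to $r$, so it is acyclic, which is a contradiction. If $t\in D$, then $\{e,r,t\}$ is acyclic in the same way. The same observation shows $\{e,r,t\}$ is never cyclic when $t\in N$. Hence a cyclic triangle containing $r$ has at most one vertex of $N$, and in fact excludes $e$. So the vortex avoids $e$ and lies in the nucleus $Q|_{\{e\}^C}$, contradicting vortex-freeness.

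Hence F1--F3 hold, and $v$ is an exit by \cref{lem:f1-f3-exit}.
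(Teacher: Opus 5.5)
Your proof is correct. For F1 and F2 it is the paper's argument: every cyclic triangle through $v$ has descent $e$ or $r$ by \cref{prop:cyclic-subquivers}, so $v$ is an ascent, and $u\to d\to e$ shows $d$ is neither a source nor a sink.

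For F3 you take a different and longer route. The paper argues from the defining property of the apex. Since $Q|_{\{e\}^C}$ and $Q|_{U^C}$ are vortex-free (nuclei in the $\Rclass$-case, acyclic otherwise), a vortex with apex $d$ must contain $e$ and some $u\in U$, necessarily in its cyclic triangle. But $d\to e$ and $u\to d$ by \cref{lem:global-descent-structure}, so $d$ is neither a source nor a sink of the vortex. You never use that the apex is a source or sink. Instead you classify the vortex's cyclic triangle via \cref{prop:cyclic-subquivers} and dispose of the cases contained in $Q|_N$, $Q|_{\{e\}^C}$ or $Q|_{U^C}$ by vortex-freeness. You then rule out the triangles $\{e,r,t\}$ with $t\in N$ using the uniform orientation $N\to r$ or $r\to N$ from \cref{prop:npm-decomposition}. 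This is valid, but it needs two extra results and a case split, and it handles the $\Rclass$-case separately. The paper's one-line argument is uniform across all three statuses.

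Two points in your write-up need repair.

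\begin{enumerate}
\item The sentence ``If $t\notin D$, the vortex lies in $Q|_{U^C}$ only when $t\notin U$ as well'' is garbled. It should say: if $t\notin N$, then $\{d,e,r,t\}\subseteq U^C$, so the vortex lies in the nucleus $Q|_{U^C}$, a contradiction.
\item More seriously, your closing claim that a cyclic triangle containing $r$ ``in fact excludes $e$'' is false. In the $\Rclass$-status quiver of \cref{ex:mutation-statuses}, where $e=2$ and the return is $r=4$, the triangle on $\{2,4,5\}$ is cyclic ($4\to2\to5\to4$). You do not need this claim: your subcases $t\notin N$, $t\in U$, $t\in D$ each already give a contradiction. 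End the argument there and drop the last two sentences.
\end{enumerate}
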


\begin{proof}
By \cref{prop:cyclic-subquivers}, every cyclic subquiver containing
$v\in D\cup U$ has descent $e$ or $r$, and hence $v$ is an ascent; thus F1
holds. Every $v\in D\cup U$ belongs to a cyclic subquiver, so F2 holds. For F3, without loss of
generality, assume that $v\in D$.
If $v$ is the apex of a vortex, vortex-freeness of $Q|_{\{e\}^C}$ and
$Q|_{U^C}$ forces the cyclic subquiver of that vortex to contain
$e$ and some $u\in U$,
contradicting $v\to e$ and $u\to v$.
Thus $(Q,v)$ satisfies F1--F3, so $v$ is an exit in $Q$ by
\cref{lem:f1-f3-exit}.
\end{proof}

\Needspace{12\baselineskip}
\section{Mutation properties}\label{sec:mutation-properties}

We study reduced mutation sequences starting in
$\Rclass_1\cup\Rclass_{2k}\cup\SSclass_{2k}\cup\Sclass_{2k}$ and derive
the exit properties needed in \cref{sec:cycle}.

\begin{lemma}\label{lem:mutation-to-r}
Let $Q$ be an abundant quiver with a nucleus $Q|_N$ and decomposition
$N=D\sqcup\{e\}\sqcup U$.
Let $v\in N^C$ satisfy $v\to N$ or $N\to v$. If
\[
 (Q|_{\{e\}^C},v),\qquad
 (Q|_{D^C},v),\qquad
 (Q|_{U^C},v)
\]
satisfy F1--F3, then $\mu_v(Q)\in\Rclass$ with closed nucleus
$\mu_v(Q)|_N=Q|_N$ and return $v$.
\end{lemma}

\begin{proof}
By \cref{lem:mutation-equivalent-nucleus}, the quivers
$\mu_v(Q)|_{\{e\}^C}$, $\mu_v(Q)|_{D^C}$, and $\mu_v(Q)|_{U^C}$
are each a nucleus of $\mu_v(Q)$ with global descent at $v$.
Every edge of $\mu_v(Q)$ is contained in one of these full subquivers, so $\mu_v(Q)$ is abundant.

Since $v\to N$ or $N\to v$, $\mu_v(Q)|_N=Q|_N$ remains a nucleus of
$\mu_v(Q)$.

We claim that every cyclic subquiver of $\mu_v(Q)$ with descent $e$ is
contained in $Q|_N$. Otherwise, there is a cyclic subquiver
$\mu_v(Q)|_{\{e,x,y\}}$ with descent $e$ and $\{e,x,y\}\nsubseteq N$.
It is contained in either $\mu_v(Q)|_{D^C}$ or $\mu_v(Q)|_{U^C}$,
each of which has a global descent at $v$.

Thus $Q|_N$ is closed in $\mu_v(Q)$, and $\mu_v(Q)\in\Rclass$ with
return $v$.
\end{proof}

We first apply \cref{lem:mutation-to-r} to reduced mutation sequences
starting in $\Rclass_1$.

\begin{theorem}\label{thm:r1-sequences}
Let $Q\in\Rclass_1$ with return $r$, and let
$\mathbf v=(v_1,\ldots,v_\ell)$ be a reduced mutation sequence with
$\ell\ge1$ and $v_1\ne r$. Then $\mu_{\mathbf v}(Q)\notin\Sclass\cup\SSclass$.
\end{theorem}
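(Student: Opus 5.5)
We argue by case analysis on the first mutation $v_1$, and for the later mutations we try to show that an invariant strictly stronger than ``not in $\Sclass\cup\SSclass$'' is preserved along the whole reduced sequence.

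\emph{Candidate invariant.} After each step $j\ge1$, the quiver $X^j:=\mu_{(v_1,\ldots,v_j)}(Q)$ is abundant and has a full subquiver that is a nucleus with global descent at $v_j$. This subquiver should be large enough to contain a cyclic subquiver not contained in $X^j|_N$ and not having descent $e$. By \cref{prop:cyclic-subquivers}(ii), any quiver in $\Sclass\cup\SSclass$ has all its cyclic subquivers inside the closed nucleus, with descent $e$. So the invariant would exclude $\Sclass\cup\SSclass$, provided the relevant cyclic triple can be shown to have descent $v_j\neq e$ or to meet $N^C$.

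\emph{First step.} Since $v_1\ne r$, there are three cases: $v_1\in D\cup U$, $v_1=e$, or $v_1\in N^C\setminus\{r\}=\nminus\sqcup\nplus\setminus\{r\}$.
\begin{itemize}
\item If $v_1\in D\cup U$, then \cref{prop:basic-class-mutations} shows that $(Q,v_1)$ satisfies F1--F3. Then \cref{lem:mutation-equivalent-nucleus}, applied to the whole $Q$, shows that each $X^j$ is a nucleus with global descent at $v_j$. A nucleus has no source or sink (\cref{rem:nucleus-no-source-sink}), and every vertex lies in a cyclic subquiver with descent $v_j$. This contradicts the requirement in $\Sclass\cup\SSclass$ that the complements of $\{e\},D,U$ be acyclic: deleting a single part of a nucleus with $\ge4$ vertices still leaves cyclic triples. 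Making that last claim precise needs care.
\item If $v_1\in N^C\setminus\{r\}$, the vertex $v_1$ lies in the nucleus $Q|_{\{e\}^C}$ (resp.\ $Q|_{D^C}$, $Q|_{U^C}$) and is not its global descent $r$. So \cref{lem:basic-f1-f3} gives F1--F3 for $v_1$ in each of these three subquivers. Then \cref{lem:mutation-equivalent-nucleus} applied to each subquiver, together with restriction compatibility $\mu_v(Q)|_S=\mu_v(Q|_S)$ for as long as the sequence stays inside $S$, keeps these three pieces nuclei.
\item If $v_1=e$, the $1$-nucleus hypothesis makes $\mu_e(Q|_N)$ abundant and acyclic with $U\to e\to D$, $U\to D$. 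Meanwhile $Q|_{D^C}$ and $Q|_{U^C}$ are nuclei with descent $r\neq e$, so $(Q|_{D^C},e)$ and $(Q|_{U^C},e)$ satisfy F1--F3 by \cref{lem:basic-f1-f3}. Thus $\mu_e(Q)|_{D^C}$ and $\mu_e(Q)|_{U^C}$ are nuclei with descent $e$, and in particular contain cyclic triples meeting $N^C$ (through $r$). This again excludes $\Sclass\cup\SSclass$ at step $1$, and gives F1--F3 data for continuing.
\end{itemize}

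\emph{Continuation (main obstacle).} The hard part is that a later $v_j$ may leave the full subquiver in which we have nucleus control, so \cref{lem:mutation-equivalent-nucleus} no longer applies directly. Note that \cref{lem:mutation-equivalent-nucleus} only needs F1--F3 at the first mutation of an abundant quiver. Therefore, whenever the whole current quiver is abundant with $(X^j,v_{j+1})$ satisfying F1--F3, all subsequent quivers are nuclei and we are done as in the first case. The plan is to reach such a state after at most two steps.

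After the first step, $X^1$ is abundant, since every edge lies in one of the three controlled subquivers (as in the proof of \cref{lem:mutation-to-r}). Each cyclic triple of $X^1$ should have descent $v_1$ or be a triple of a controlled structure. I would show that every $v_2\neq v_1$ is an ascent in all cyclic triples of $X^1$, is not a source or sink, and is not a vortex apex. The tools are \cref{lem:rank-three-descent} for ascents and the vortex-freeness of the three nuclei, arguing as in \cref{prop:basic-class-mutations}. This would give F1--F3 for $(X^1,v_2)$ in the whole quiver. Applying \cref{lem:mutation-equivalent-nucleus} to $X^1$ with the sequence $(v_2,\ldots,v_\ell)$ then makes every later $X^j$ a nucleus, hence not in $\Sclass\cup\SSclass$.

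The delicate point I expect is the vortex-apex condition F3 for $v_2$ at step $1$ in the case $v_1=e$. A vortex there could use vertices from both $D$ and $U$ together with $r$, and this would have to be excluded using the orientation $U\to e\to D$ of the $1$-nucleus after mutation at $e$.
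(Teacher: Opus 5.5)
There is a genuine gap in your continuation step. The claim that after one mutation every $v_2\neq v_1$ satisfies F1--F3 in the whole quiver $X^1$ is false in both nontrivial branches. That claim is what would let \cref{lem:mutation-equivalent-nucleus} finish within two steps.

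\emph{Branch $v_1\in N^C\setminus\{r\}$.} Here $X^1|_N=Q|_N$ still contains the cyclic triple $\{d,e,u\}$ with descent $e$, so F1 fails for $v_2=e$. Moreover, by \cref{prop:npm-decomposition} applied to $X^1$, every $v_2\in N^C\setminus\{v_1\}$ satisfies $v_2\to N$ or $N\to v_2$. Hence $X^1|_{\{v_2,d,e,u\}}$ is a vortex with apex $v_2$, and F3 fails.

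\emph{Branch $v_1=e$, $v_2=d\in D$.} In $\mu_e(Q)$ one has $U\sqcup\{e\}\sqcup\nminus\to d$, while $\{u,e,n^-\}$ is cyclic. So $\{u,e,n^-,d\}$ is a vortex with apex $d$. The point you flag as delicate is therefore an actual failure, and the offending vortex is not one through $r$.

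More fundamentally, a reduced sequence that keeps mutating at vertices of $N^C$ never reaches a whole-quiver F1--F3 state. So no bounded-step argument can work.

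The missing idea is that these steps land back in $\Rclass_1$. This turns the problem into an induction on $\ell$, taken simultaneously over all $Q\in\Rclass_1$.

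For $v_1\in N^C\setminus\{r\}$, use the F1--F3 facts you already derived for the three subquivers. Then \cref{lem:mutation-to-r} gives $\mu_{v_1}(Q)\in\Rclass_1$ with the same closed nucleus and return $v_1$.

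For $(v_1,v_2)=(e,d)$, the subquiver $\mu_e(Q)|_{U\sqcup\{e\}\sqcup\nminus}$ is a new $1$-nucleus, with $U\sqcup\{e\}\sqcup\nminus\to d$. The only vortices with apex $d$ lie on $\{u,e,n^-,d\}$. Each of $\mu_e(Q)|_{\{e\}^C}$, $\mu_e(Q)|_{U^C}$, $\mu_e(Q)|_{(\nminus)^C}$ omits one of $u,e,n^-$, so F1--F3 holds for $d$ in all three. Then \cref{lem:mutation-to-r} gives $\mu_{(e,d)}(Q)\in\Rclass_1$ with return $d$, and dually for $U$. Since the closed nucleus changes here, the induction must run over all of $\Rclass_1$, not over a fixed nucleus. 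Reducedness then supplies exactly the hypothesis ``next vertex $\neq$ return''. Whole-quiver F1--F3 is reached only for $v_1\in D\cup U$ and for $(v_1,v_2)=(e,n^{\pm})$.

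Separately, your exclusion criterion is misdirected. ``A cyclic triple meeting $N^C$ or with descent $\neq e$'' refers to the original $N$ and $e$. Membership in $\Sclass\cup\SSclass$, however, would be with respect to the quiver's own closed nucleus. Also, containing a nucleus with global descent at $v_j$ is compatible with $\Sclass$-status. The clean test is that $\Sclass$ and $\SSclass$ both require a source or a sink, whereas nuclei and $\Rclass$-quivers have neither.
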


\begin{proof}
We proceed by induction on $\ell$, simultaneously for all $Q\in\Rclass_1$.

\emph{Case 1:} $v_1\in D\cup U$.
The pair $(Q,v_1)$ satisfies F1--F3 by
\cref{prop:basic-class-mutations}. By \cref{lem:mutation-equivalent-nucleus},
$\mu_{\mathbf v}(Q)$ is a nucleus and hence has neither a source nor a sink.

\emph{Case 2:} $v_1\in\nminus\cup\nplus$.
By the definition of the $\Rclass$-status, the quivers
$Q|_{\{e\}^C}$, $Q|_{D^C}$, and $Q|_{U^C}$ are each a nucleus with global
descent at $r$. Since $v_1\ne r$, \cref{lem:basic-f1-f3} shows that
$\bigl(Q|_{\{e\}^C},v_1\bigr)$, $\bigl(Q|_{D^C},v_1\bigr)$, and
$\bigl(Q|_{U^C},v_1\bigr)$ satisfy F1--F3. Hence
\cref{lem:mutation-to-r} gives $\mu_{v_1}(Q)\in\Rclass_1$ with return $v_1$.
For $\ell=1$, the conclusion follows. For $\ell>1$, reducedness gives
$v_2\ne v_1$, and the induction hypothesis applies to $(v_2,\ldots,v_\ell)$
with initial quiver $\mu_{v_1}(Q)$.

It remains to consider $v_1=e$. By definition, $\mu_e(Q|_N)$ is abundant and
acyclic. Since
$\nminus\to N\to\nplus$, the full subquivers
$Q|_{\{u,e,n^-\}}$ and $Q|_{\{e,d,n^+\}}$ are acyclic with elbow $e$,
including when $n^-=r$ or $n^+=r$. By \cref{prop:cyclic-subquivers},
$Q|_{\{e,n^-,n^+\}}$ is acyclic unless $r\in\{n^-,n^+\}$. In the acyclic
case, $e$ is its elbow; in the cyclic case, its descent is $r$, so $e$ is an
ascent. The rank-three mutation rule therefore gives an abundant quiver
$\mu_e(Q)$ whose cyclic subquivers are
precisely
\[
\mu_e(Q)|_{\{u,e,n^-\}},\qquad
\mu_e(Q)|_{\{e,d,n^+\}},\qquad
\mu_e(Q)|_{\{e,n^-,n^+\}},
\]
all with descent $e$, where $d\in D$, $u\in U$, $n^-\in\nminus$, and
$n^+\in\nplus$ are arbitrary; see \cref{fig:one-nucleus-transition}.

\begin{figure}[H]
\centering
\begin{tikzpicture}[scale=.91]
\fivecoords
\draw[fivearrow] (w)--(e);  \draw[fivearrow] (e)--(d);  \draw[fivearrow] (w)--(d);
\draw[fivearrow] (e)--(cm); \draw[fivearrow] (cm)--(w); \draw[fivearrow] (cm)--(d);
\draw[fivearrow] (cp)--(e); \draw[fivearrow] (w)--(cp); \draw[fivearrow,shorten >=9pt] (d)--(cp);
\draw[fivearrow,shorten >=10pt] (cm) to[bend right=14] (cp);
\node[fill=none,inner sep=1pt,font=\small] at (cm) {$n^-$};
\node[fill=none,inner sep=1pt,font=\small] at (e) {$e$};
\node[fill=none,inner sep=1pt,font=\small] at (w) {$u$};
\node[fill=none,inner sep=1pt,font=\small] at (cp) {$n^+$};
\node[fill=none,inner sep=1pt,font=\small] at (d) {$d$};
\end{tikzpicture}
\caption{The orientation of $\mu_e(Q)$ in the $\Rclass_1$-case.}
\label{fig:one-nucleus-transition}
\end{figure}

Every vertex of $\mu_e(Q)$ belongs to one of the displayed cyclic subquivers,
so $\mu_e(Q)$ has neither a source nor a sink. If $\ell=1$, the proof is
complete. Assume $\ell>1$. Then $v_2\ne e$ by reducedness.

\emph{Case 3:} $v_1=e$, $v_2\in D\cup U$.
Without loss of generality, take $v_2=d\in D$.
Since $Q|_{U\sqcup\{e\}\sqcup\nminus}$ is acyclic with orientation
$\nminus\to e\to U$ and $\nminus\to U$,
\cref{def:special-nucleus-types}\textup{(i)} shows that
$\mu_e(Q)|_{U\sqcup\{e\}\sqcup\nminus}$ is a $1$-nucleus with decomposition
$U\sqcup\{e\}\sqcup\nminus$.
Moreover,
\[
 U\sqcup\{e\}\sqcup\nminus\longrightarrow d
 \qquad\text{in }\mu_e(Q).
\]
For this nucleus and the vertex $d$, consider
$\bigl(\mu_e(Q)|_{\{e\}^C},d\bigr)$,
$\bigl(\mu_e(Q)|_{U^C},d\bigr)$, and
$\bigl(\mu_e(Q)|_{(\nminus)^C},d\bigr)$.
The cyclic subquivers listed above all have descent $e$, so $d$ is an ascent
in every cyclic subquiver containing it. The orientations in
\cref{fig:one-nucleus-transition} show that $d$ is neither a source nor a sink
in any of the three full subquivers. The only possible vortex with apex $d$
has cyclic subquiver
$\mu_e(Q)|_{\{u,e,n^-\}}$; each of the three full subquivers omits one of
$u,e,n^-$. These observations verify F1--F3 for each pair, and
\cref{lem:mutation-to-r} gives $\mu_{(e,d)}(Q)\in\Rclass_1$ with return $d$.
For $\ell=2$, the conclusion follows. For $\ell>2$, reducedness gives
$v_3\ne d$, and the induction hypothesis applies to $(v_3,\ldots,v_\ell)$
with initial quiver $\mu_{(e,d)}(Q)$.

\emph{Case 4:} $v_1=e$, $v_2\in\nminus\cup\nplus$.
Without loss of generality, assume $v_2\in\nminus$. Since every cyclic
subquiver of $\mu_e(Q)$ has descent $e$,
the vertex $v_2$ is an ascent in every cyclic subquiver containing it.
Moreover, \cref{fig:one-nucleus-transition} shows that $v_2$ is neither a
source nor a sink and is not the apex of a vortex. Hence
$(\mu_e(Q),v_2)$ satisfies F1--F3. By
\cref{lem:mutation-equivalent-nucleus}, $\mu_{\mathbf v}(Q)$ is a nucleus
and hence has neither a source nor a sink.
\end{proof}

We now analyze alternating mutations at $e$ and $w$ for quivers with a
$2k$-nucleus.

Let $\Qalt{0}\in\Rclass_{2k}\cup\SSclass_{2k}\cup\Sclass_{2k}$.
Without loss of generality, assume $U=\{w\}$. Define the alternating mutation sequence by
\begin{equation}\label{eq:alternating-sequence}
\Qalt{j}=\mu_{(i_1,\ldots,i_j)}(\Qalt{0}),\qquad
i_j=\begin{cases}
e,&j\text{ odd},\\
w,&j\text{ even},
\end{cases}
\quad 1\le j\le2k+2.
\end{equation}
Throughout this sequence, $N,D,U,\nminus$, and $\nplus$ refer to $\Qalt{0}$.

\Needspace{30\baselineskip}
\begin{theorem}\label{thm:alternating-mutations}
Let $\Qalt{0}\in\Rclass_{2k}\cup\SSclass_{2k}$, and assume $U=\{w\}$.
Define $\Qalt{j}$, $1\le j\le2k+2$, by
\eqref{eq:alternating-sequence}.

Every $\Qalt{j}$, $0\le j\le2k+2$, is abundant, and $\Qalt{j}$ has neither
a source nor a sink for $1\le j\le2k+2$. The following statements hold.
\begin{enumerate}[label=\textup{(\roman*)},leftmargin=2.35em,itemsep=1pt,topsep=1pt]
\item For $\Qalt{0}$:
\begin{enumerate}[label=\textup{(\alph*)},leftmargin=2.15em,itemsep=0pt,topsep=0pt]
\item For every $v\in D\sqcup\{w\}$, the pair $(\Qalt{0},v)$ satisfies
F1--F3.
\item For $v\in\nminus\sqcup\nplus$, assume that $v\ne r$ in the
$\Rclass_{2k}$-case and that $v$ is neither a source nor a sink in the
$\SSclass_{2k}$-case. Then
$\mu_v(\Qalt{0})\in\Rclass_{2k}$ with return $v$.
\end{enumerate}

\item For $1\le j<2k$, $(\Qalt{j},v)$ satisfies F1--F3 for every $v\notin\{e,w\}$.

\item For $j=2k$:
\begin{enumerate}[label=\textup{(\alph*)},leftmargin=2.15em,itemsep=0pt,topsep=0pt]
\item For $v\in D$, $\mu_v(\Qalt{2k})\in\Rclass_{2k}$ with return $v$.
\item For $v\in\nminus$, $\mu_v(\Qalt{2k})\in\Rclass_1$ with return $v$.
\item For $v\in\nplus$, $(\Qalt{2k},v)$ satisfies F1--F3.
\end{enumerate}

\item For $j=2k+1$:
\begin{enumerate}[label=\textup{(\alph*)},leftmargin=2.15em,itemsep=0pt,topsep=0pt]
\item For $v\in D$, $(\Qalt{2k+1},v)$ satisfies F1--F3.
\item For $v\in\nminus$, $\mu_v(\Qalt{2k+1})\in\Rclass_1$ with return $v$.
\item For $v\in\nplus$, $\mu_v(\Qalt{2k+1})\in\Rclass_1$ with return $v$.
\end{enumerate}

\item For $j=2k+2$:
\begin{enumerate}[label=\textup{(\alph*)},leftmargin=2.15em,itemsep=0pt,topsep=0pt]
\item For $v\in D$,
$\mu_v(\Qalt{2k+2})\in\Rclass_{2k}$ with return $v$.
\item For $v\in\nminus$, $(\Qalt{2k+2},v)$ satisfies F1--F3.
\item For $v\in\nplus$, $\mu_v(\Qalt{2k+2})\in\Rclass_1$ with
return $v$.
\item The pair $(\Qalt{2k+2},e)$ satisfies F1--F3.
\end{enumerate}
\end{enumerate}
\end{theorem}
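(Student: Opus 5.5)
The plan is to compute the orientation of every $\Qalt{j}$ explicitly, in the same way as \cref{fig:one-nucleus-transition}, by tracking the five ``blocks'' $\nminus$, $e$, $w$, $D$, $\nplus$. Before the first mutation we have $\nminus\to N\to\nplus$ by \cref{prop:npm-decomposition}. Inside $N$ the arrows are $D\to e\to w\to D$, and $\nminus\to\nplus$ holds except possibly at the return $r$. Since mutations only occur at $e$ and $w$, \cref{lem:rank-three-edge-invariance} should show that arrows inside $D$, inside $\nminus$, inside $\nplus$, and between $D$ and $N^{\pm}$ never change. Those pairs are separated from $\{e,w\}$ by acyclic triangles with elbow $e$ or $w$, or by cyclic ones in which $e,w$ are ascents.

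The main point is then the sequence of rank-three computations on the triangles $\{e,w,x\}$ for $x\in D\sqcup\nminus\sqcup\nplus$.
\begin{itemize}
\item For $x=d\in D$, the triangle $\{e,w,d\}$ is cyclic with descent $e$. After $\mu_e$ it becomes the acyclic triangle whose elbow is $e$, and \cref{lem:rank-three-alternating}, read backwards from $\mu_{(e,w)^k}$, shows it stays cyclic until step $2k$. Its descent moves between $e$ and $w$, and at $j=2k$ it becomes acyclic with $e\to w\to D$.
\item For $x\in\nminus$ or $x\in\nplus$, the triangle is initially acyclic with elbow $e$ or $w$. \cref{lem:rank-three-alternating} then applies directly: it is cyclic for all $j\ge1$, with descent the vertex last mutated.
\end{itemize}
I would record these three families of triangles, together with the remaining triangles $\{e\text{ or }w,x,y\}$ computed by the single-step rule, in a table of orientations for $j=2k,2k+1,2k+2$. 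I would also record one generic picture for $1\le j<2k$, in the style of \cref{fig:one-nucleus-transition}. Abundance follows from the recurrences, since all weights involved are $\ge2$ and increase. Absence of sources and sinks for $j\ge1$ follows because every vertex lies in one of the cyclic triangles just listed.

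With the tables in hand, each item reduces to one of two tools.
\begin{itemize}
\item \textbf{Items asserting F1--F3.} These are (i)(a), (ii), (iii)(c), (iv)(a), (v)(b) and (v)(d). For each, I check from the table that every cyclic triangle containing $v$ has descent in $\{e,w\}$ (or $r$), so $v$ is an ascent. I also check that $v$ is neither a source nor a sink, and that no vortex has apex $v$. The vortex check is the same argument as in Case~3 of \cref{thm:r1-sequences}: the only cyclic triangles avoiding $v$ involve $e$ or $w$, and their orientation relative to $v$ is mixed. Item (i)(a) is just \cref{prop:basic-class-mutations}.
\item \textbf{Items asserting membership in $\Rclass$.} These are (i)(b), (iii)(a), (iii)(b), (iv)(b), (iv)(c), (v)(a) and (v)(c). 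For each, I identify the new closed nucleus of $\Qalt{j}$ and apply \cref{lem:mutation-to-r}; the three pairs satisfy F1--F3 by \cref{lem:basic-f1-f3} or by direct inspection as in Case~3.
\end{itemize}
The new closed nucleus is chosen as follows.
\begin{itemize}
\item When the triangle $\{e,w\}\cup D$ is cyclic again at $j=2k+2$, the nucleus is $\{e,w\}\cup D$ itself. It is a $2k$-nucleus because $\mu_{(w,e)}$ undoes the last two steps, using \cref{def:special-nucleus-types}(ii) with the roles possibly reversed.
\item At $j=2k$ or $2k+1$, the cyclic part sits on $\{e,w\}\cup\nminus$ or $\{e,w\}\cup\nplus$. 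Here one mutation at $e$ or $w$ yields an acyclic quiver of the shape in \cref{def:special-nucleus-types}(i), giving a $1$-nucleus.
\item For (i)(b), the nucleus is unchanged and \cref{lem:basic-f1-f3} applies to the three nuclei $Q|_{\{e\}^C}$, $Q|_{D^C}$ and $Q|_{U^C}$, with global descent $r$. In the $\SSclass$ case these are acyclic, and the hypothesis excludes sources and sinks.
\end{itemize}

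The main obstacle I expect is the bookkeeping at $j=2k,2k+1,2k+2$. There I must verify that the chosen nucleus is genuinely closed, that is, no other cyclic triangle has descent equal to the new global descent. I must also verify the nucleus-type claim, namely whether the mutated block becomes abundant and acyclic with exactly the orientation required by \cref{def:special-nucleus-types}. I would first handle the $\SSclass_{2k}$ case, where $r$ is absent, and then check that $r$ changes nothing except that it is excluded in (i)(b).
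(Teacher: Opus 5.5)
Your architecture is the paper's: block orientation tables, then F1--F3 checks or \cref{lem:mutation-to-r}. However, two concrete steps fail, and they are exactly where the real work lies.

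First, the edges between $D$ and $\nplus$ are neither invariant nor monotone. Since $d\to e\to n^+$ in $\Qalt{0}$ and $d\to w\to n^+$ in $\Qalt{1}$, the weight $b_{dn^+}$ grows in the first two steps. More seriously, $\Qalt{2k}|_{\{e,d,n^+\}}$ is cyclic ($n^+\to e\to d\to n^+$) with descent $e$, and $\Qalt{2k+1}|_{\{w,d,n^+\}}$ is cyclic with descent $w$, so the last two mutations \emph{decrease} this weight. Neither \cref{lem:rank-three-edge-invariance} nor ``all weights increase'' prevents this edge from reversing, vanishing, or dropping below $2$. The paper needs the explicit Chebyshev identity of \cref{lem:alternating-d-nplus}: $b_{dn^+}(\Qalt{2k+1})=b_{dn^+}(\Qalt{0})+\lvert b_{dw}(\Qalt{2k+1})\rvert\,\lvert b_{n^+w}(\Qalt{2k+1})\rvert$ and $b_{dn^+}(\Qalt{2k+2})=b_{dn^+}(\Qalt{0})$. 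Without it, three things remain unproved. You have not shown that $\Qalt{2k+1}$ and $\Qalt{2k+2}$ are abundant. You have not established the orientation $d\to n^+$ used throughout (iii)--(v). And you lack the $1$-nucleus property in (iii)(b) and (iv)(b), since the relevant $\mu_e$, resp.\ $\mu_w$, is acyclic only because of this identity. There are smaller slips in your triangle analysis as well. $\Qalt{1}|_{\{e,w,d\}}$ is cyclic, not acyclic, for every $k\ge1$. The triangle $\{e,w,n^+\}$ is acyclic in $\Qalt{1}$ and $\Qalt{2}$, because $e$, then $w$, is a source there, so \cref{lem:rank-three-alternating} applies only from $j=2$. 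Finally, $b_{n^-d}$ also changes in the last two steps, though harmlessly.

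Second, your choice of the new closed nuclei is wrong. $\Qalt{2k+2}|_{\{e,w\}\sqcup D}$ is acyclic ($d\to e\to w$, $d\to w$), because the last two mutations are at sources of $\Qalt{2k}|_N$ and $\Qalt{2k+1}|_N$. So it cannot be the nucleus in (v)(a). Nor is $\Qalt{2k}|_{\{e,w\}\sqcup\nminus}$ a $1$-nucleus. One mutation at $e$ or $w$ leaves $\{e,w,n^-\}$ cyclic, and only the full $\mu_{(w,e)^k}$ returns it to the acyclic $\Qalt{0}|_{\{e,w\}\sqcup\nminus}$. It is a $2k$-nucleus with global descent $w$ and alternating vertex $e$, which is exactly why (iii)(a) lands in $\Rclass_{2k}$ rather than $\Rclass_1$. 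For $v\in\nminus\cup\nplus$ in (iii)(b), (iv)(b), (iv)(c), (v)(c), the block $\{e,w\}\sqcup N^{\mp}$ cannot be used at all. There $e$ and $w$ lie on opposite sides of $v$ (e.g.\ $e\to n^-\to w$ in $\Qalt{2k+1}$), whereas \cref{lem:mutation-to-r} needs $v\to N'$ or $N'\to v$. The correct $2k$-nuclei are $\Qalt{2k}|_{\{e,w\}\sqcup\nminus}$ for (iii)(a) and $\Qalt{2k+2}|_{\{e,w\}\sqcup\nplus}$ for (v)(a); the latter returns to the acyclic $\Qalt{2}|_{\{e,w\}\sqcup\nplus}$ under $\mu_{(w,e)^k}$. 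For (iii)(b), (iv)(b), (iv)(c), (v)(c), one needs the mixed $1$-nuclei $\Qalt{2k}|_{\nplus\sqcup\{e\}\sqcup D}$, $\Qalt{2k+1}|_{\nplus\sqcup\{w\}\sqcup D}$, $\Qalt{2k+1}|_{D\sqcup\{e\}\sqcup\nminus}$ and $\Qalt{2k+2}|_{D\sqcup\{w\}\sqcup\nminus}$, respectively. The three pairs fed to \cref{lem:mutation-to-r} must then be taken relative to these new decompositions, e.g.\ $\Qalt{2k}|_{\{w\}^C}$, $\Qalt{2k}|_{\{e\}^C}$ and $\Qalt{2k}|_{(\nminus)^C}$ in (iii)(a), not relative to the original $D$, $e$, $U$.
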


The conclusions are summarized in \cref{fig:alternating-flowchart}.

\begingroup
\setlength{\intextsep}{6pt}
\begin{figure}[H]
\centering
\begin{tikzpicture}[
  x=1cm,y=.78cm,
  line cap=round,line join=round,
  every node/.style={font=\footnotesize,inner sep=1.4pt,fill=none},
  quiver/.style={font=\footnotesize,inner sep=1.8pt},
  result/.style={font=\footnotesize,inner sep=1.8pt},
  branch/.style={line width=.5pt},
  pathline/.style={line width=.7pt},
  labeltext/.style={font=\scriptsize,inner sep=.7pt},
  pathnode/.style={font=\scriptsize,inner sep=1pt,outer sep=0pt},
  condition/.style={draw,line width=.5pt,rounded corners=1.4pt,
    minimum width=5.15cm,minimum height=.62cm,font=\footnotesize}
]
  \node[quiver] (q0) at (0.8,3.75) {$Q^0$};
  \node[pathnode] (q1) at (0.8,2.72) {$Q^1$};
  \node[pathnode] (q2km1) at (0.8,1.03) {$Q^{2k-1}$};
  \node[quiver] (q2k) at (0.8,0) {$Q^{2k}$};
  \node[quiver] (q2k1) at (5.05,0) {$Q^{2k+1}$};
  \node[quiver] (q2k2) at (9.15,0) {$Q^{2k+2}$};
  \node[condition] (f) at (6.25,2.05) {$(Q^j,v)$ satisfies F1--F3};
  \node[result] (r0) at (9.35,3.75) {$\mathcal R_{2k}$};

  \draw[pathline] (q0.south) -- (q1.north);
  \draw[pathline] (q1.south) -- (q2km1.north);
  \draw[pathline] (q2km1.south) -- (q2k.north);
  \node[labeltext,anchor=east] at (0.57,1.86) {$(e,w)^k$};
  \draw[pathline] (q2k.east) -- node[labeltext,above] {$e$} (q2k1.west);
  \draw[pathline] (q2k1.east) -- node[labeltext,above] {$w$} (q2k2.west);

  \draw[branch] (q0.east) --
    node[labeltext,above=1.5pt,pos=.52] {$v\in N^C$, not the return or sink/source}
    (r0.west);
  \draw[branch] (q0.south east) to[out=-20,in=170]
    node[labeltext,above,sloped,pos=.46] {$v\in D\cup\{w\}$}
    (f.north west);

  \coordinate (braceout) at (1.46,1.875);
  \draw[line width=.45pt]
    (q1.east) .. controls (1.32,2.72) and (1.32,2.66) .. (1.32,2.46)
    -- (1.32,2.12)
    .. controls (1.32,1.93) and (1.32,1.875) .. (braceout)
    .. controls (1.32,1.875) and (1.32,1.82) .. (1.32,1.63)
    -- (1.32,1.29)
    .. controls (1.32,1.09) and (1.32,1.03) .. (q2km1.east);
  \draw[branch] (braceout) to[out=23,in=175]
    node[labeltext,below=1.5pt,pos=.51] {$v\in\{e,w\}^C$}
    (f.west);

  \draw[branch] (q2k.north east) to[out=12,in=-135]
    node[labeltext,above,sloped,pos=.46] {$v\in N^+$}
    (f.south west);
  \draw[branch] (q2k1.north) to[out=90,in=-90]
    node[labeltext,below right=2pt,pos=.47] {$v\in D$}
    (f.south);
  \draw[branch] (q2k2.north) to[out=90,in=-25]
    node[labeltext,right,pos=.34] {$v\in N^-\cup\{e\}$}
    (f.east);

  \node[result] (ra) at (-0.15,-1.35) {$\mathcal R_1$};
  \node[result] (rb) at (2.25,-1.35) {$\mathcal R_{2k}$};
  \node[result] (rc) at (5.05,-1.35) {$\mathcal R_1$};
  \node[result] (rd) at (7.95,-1.35) {$\mathcal R_{2k}$};
  \node[result] (re) at (10.55,-1.35) {$\mathcal R_1$};
  \draw[branch] (q2k.south west) --
    node[labeltext,above,sloped,pos=.48] {$v\in N^-$}
    (ra.north);
  \draw[branch] (q2k.south east) --
    node[labeltext,above,sloped,pos=.48] {$v\in D$}
    (rb.north);
  \draw[branch] (q2k1.south) --
    node[labeltext,right,pos=.46] {$v\in N^-\cup N^+$}
    (rc.north);
  \draw[branch] (q2k2.south west) --
    node[labeltext,above,sloped,pos=.48] {$v\in D$}
    (rd.north);
  \draw[branch] (q2k2.south east) --
    node[labeltext,above,sloped,pos=.48] {$v\in N^+$}
    (re.north);
\end{tikzpicture}
\caption{Mutation outcomes in \cref{thm:alternating-mutations}.
The $\Rclass$-labels give the status of $\mu_v(Q^j)$, with return $v$.
For $k=1$, the labels $Q^1$ and $Q^{2k-1}$ denote the same quiver.}
\label{fig:alternating-flowchart}
\end{figure}
\endgroup

We next prove \cref{thm:alternating-mutations}. By
\cref{prop:npm-decomposition},
$V(\Qalt{0})=\{e\}\sqcup\{w\}\sqcup D\sqcup\nminus\sqcup\nplus$.
Fix any $d\in D$. If $\nminus\ne\varnothing$, fix $n^-\in\nminus$; if
$\nplus\ne\varnothing$, fix $n^+\in\nplus$. Throughout the proof, write
\[
 b_{ij}=b_{ij}(\Qalt{0}),\qquad p_\ell=p_\ell(b_{ew}).
\]

\begin{lemma}\label{lem:alternating-ew-edges}
The full subquivers $\Qalt{j}|_D$, $\Qalt{j}|_{\nminus}$, and
$\Qalt{j}|_{\nplus}$ remain unchanged throughout
\eqref{eq:alternating-sequence}.
The orientations and weights of all edges joining $e$ or $w$ to
$D\cup\nminus\cup\nplus$ are determined by
\cref{lem:rank-three-alternating}.
\end{lemma}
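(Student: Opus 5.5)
The plan is to run induction on $j$, using the fact that every mutation in \eqref{eq:alternating-sequence} is at $e$ or at $w$. The first claim, that $\Qalt{j}|_D$, $\Qalt{j}|_{\nminus}$ and $\Qalt{j}|_{\nplus}$ do not change, should follow from \cref{lem:rank-three-edge-invariance}. Take two vertices $x,y\in D$ (or both in $\nminus$, or both in $\nplus$) and let $i\in\{e,w\}$ be the mutated vertex. It suffices to show that $i$ is an ascent or a descent in each of the triangles $\Qalt{j-1}|_{\{i,i',x\}}$ and $\Qalt{j-1}|_{\{i,i',y\}}$, where $\{i,i'\}=\{e,w\}$. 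Then \cref{lem:rank-three-edge-invariance} gives $b_{xy}(\Qalt{j})=b_{xy}(\Qalt{j-1})$.

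For the second claim, fix $x\in D\cup\nminus\cup\nplus$ and look at the three-vertex subquiver $\Qalt{j}|_{\{e,w,x\}}$. A mutation at $e$ or $w$ changes only this triangle's edges among the edges incident to $e$, $w$ or $x$ within it. So the sequence $\Qalt{j}|_{\{e,w,x\}}$ is exactly the alternating sequence $\mu_{(e,w,e,\dots)}$ applied to $\Qalt{0}|_{\{e,w,x\}}$. We also need that the edges $e$--$x$ and $w$--$x$ in $\Qalt{j}$ depend only on this triangle. That holds because, when mutating at $e$, the new weight $b'_{wx}$ uses only $b_{we},b_{ex},b_{wx}$, and when mutating at $w$ only $b_{ew},b_{wx}$ enter. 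Hence it is enough to identify each triangle $\Qalt{0}|_{\{e,w,x\}}$ with a configuration to which \cref{lem:rank-three-alternating} applies.

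The key step is to show that each triangle becomes an abundant acyclic quiver with elbow $e$ after a suitable shift of indexing. For $x=d\in D$, the triangle is cyclic with descent $e$ ($d\to e\to w\to d$). By the $2k$-nucleus hypothesis, $\mu_{(e,w)^k}$ makes it acyclic with $e\to w\to d$ and $e\to d$. Since mutation is an involution, the triangle is obtained from that acyclic quiver, whose elbow is $w$, by the reverse alternating sequence, and \cref{lem:rank-three-alternating} (with the roles of $1,2$ played by $w,e$) describes all its stages. For $x=n^-\in\nminus$, the triangle is $n^-\to e\to w$, $n^-\to w$: it is acyclic with elbow $e$, so \cref{lem:rank-three-alternating} applies directly with $1=e$, $2=w$, $3=n^-$. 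Dually, for $n^+\in\nplus$ the triangle is $e\to w\to n^+$, $e\to n^+$, which is acyclic with elbow $w$. In the $\Rclass$-case the return $r$ lies in $\nminus$ or $\nplus$ by \cref{prop:npm-decomposition}, and the same description holds. Abundance of these triangles follows from abundance of $\Qalt{0}$.

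The ascent/descent hypothesis of the first claim then comes from the explicit formulas. In the acyclic stages, $i$ is either the elbow, which is the ascent by \cref{lem:rank-three-descent}(i), or the vertex being mutated back. In the cyclic stages \cref{lem:rank-three-descent}(ii) applies. I expect the main obstacle to be exactly this point: making sure that at each stage the mutated vertex is never a source or sink of the triangle at which the opposite weight stays unchanged, while being a non-ascent and non-descent. This should follow because a mutation at a source or sink leaves the opposite edge unchanged, which makes it invariant in any case. With that observation \cref{lem:rank-three-edge-invariance} applies at every step, and the first claim follows by induction on $j$.
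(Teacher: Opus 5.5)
You have the right architecture: restrict to the triangles $\{e,w,x\}$, read their evolution off \cref{lem:rank-three-alternating}, and show that edges inside $D$, $\nminus$, $\nplus$ are never changed. But the proposal breaks down at exactly the four mutations where the mutated vertex is a \emph{source} of the relevant triangle, and these are the cases the proof of this lemma actually has to handle.

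First, \cref{lem:rank-three-alternating} only describes an alternation that \emph{begins at the elbow}. For $n^+\in\nplus$, the triangle $Q^{0}|_{\{e,w,n^+\}}$ has elbow $w$, yet the sequence starts with $\mu_e$, and $e$ is its source. After that, $w$ is the source of $Q^{1}|_{\{e,w,n^+\}}$. So ``dually'' gives you nothing here: the first two steps are pure arrow reversals. Only $Q^{2}|_{\{e,w,n^+\}}$, with $n^+\to e\to w$ and $n^+\to w$, has elbow $e$, and the lemma applies forward from there.

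Likewise, for $d\in D$ the backward application from $Q^{2k}$ covers only $0\le j\le 2k$, not ``all stages.'' The vertex $e$ is the source of $Q^{2k}|_{\{e,w,d\}}$, and $w$ is the source of $Q^{2k+1}|_{\{e,w,d\}}$. So the steps to $j=2k+1$ and $j=2k+2$ are again mere reversals and must be treated separately.

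Second, at these same four steps the mutated vertex is neither an ascent nor a descent. So \cref{lem:rank-three-edge-invariance} does \emph{not} apply there, contrary to your closing claim that it ``applies at every step.'' Your patch does not repair this. That a source mutation leaves the edge $i'$--$x$ of the triangle unchanged says nothing about the edge $x$--$y$ between two vertices of the same set.

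What you need is that all arrows between the mutated vertex and the set point the same way: $e\to\nplus$ in $Q^{0}$, $w\to\nplus$ in $Q^{1}$, $e\to D$ in $Q^{2k}$, and $w\to D$ in $Q^{2k+1}$. Then $x$ and $y$ lie on the same side of the mutated vertex, and the mutation formula leaves $b_{xy}$ unchanged.

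In fact this same-side observation holds at \emph{every} step and for each of $D$, $\nminus$, $\nplus$, because the sign pattern in \cref{lem:rank-three-alternating} does not depend on $b$ and $c$. That is how the paper proves the first claim, uniformly and with no ascent/descent bookkeeping. \Cref{lem:rank-three-edge-invariance} is just this observation routed through the pivot $i'$. Routing it that way is what creates the exceptional steps you flagged as the main obstacle.
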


\begin{proof}
The full subquivers $\Qalt{2k}|_{\{e,w,d\}}$,
$\Qalt{0}|_{\{e,w,n^-\}}$, and $\Qalt{0}|_{\{e,w,n^+\}}$ are acyclic by
\cref{def:special-nucleus-types}\textup{(ii)} and
\cref{prop:npm-decomposition}. The quivers
$\Qalt{2k}|_{\{e,w,d\}}$ and
$\Qalt{0}|_{\{e,w,n^-\}}$ have elbows $w$ and $e$, respectively. Hence
\cref{lem:rank-three-alternating} applies backward to
$\Qalt{j}|_{\{e,w,d\}}$ from $j=2k$ and forward to
$\Qalt{j}|_{\{e,w,n^-\}}$ from $j=0$.

In $\Qalt{0}|_{\{e,w,n^+\}}$, the vertex $e$ is a source, and in
$\Qalt{1}|_{\{e,w,n^+\}}$, the vertex $w$ is a source. The quiver
$\Qalt{2}|_{\{e,w,n^+\}}$ is acyclic with elbow $e$, so
\cref{lem:rank-three-alternating} applies from $j=2$. Finally, within the
full subquivers on $\{e,w,d\}$, the last two mutations merely reverse the
arrows incident to $e$ and $w$, since $e$ and $w$ are sources in
$\Qalt{2k}|_N$ and $\Qalt{2k+1}|_N$, respectively.

The quivers $\Qalt{0}|_D$, $\Qalt{0}|_{\nminus}$, and
$\Qalt{0}|_{\nplus}$ are acyclic by
\cref{rem:delete-global-descent,prop:npm-acyclicity}. The preceding orientations
show that, at every step, all arrows between the mutated vertex and each of
these three sets have the same orientation. Hence mutation does not change
any arrow joining two vertices of the same set.
\end{proof}

\begin{lemma}\label{lem:alternating-d-nminus}
For every $d\in D$, $n^-\in\nminus$, and $0\le j\le2k+2$, the quiver
$\Qalt{j}$ contains an arrow $n^-\to d$ of weight at least
$b_{n^-d}\ge2$.
\end{lemma}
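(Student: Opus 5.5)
The plan is to track the single number $b_{n^-d}(\Qalt{j})$ along the alternating sequence. Since only $e$ and $w$ are mutated, the mutation rule gives
\[
 b_{n^-d}(\Qalt{j})=b_{n^-d}(\Qalt{j-1})
 +[b_{n^-i_j}]_+[b_{i_jd}]_+-[-b_{n^-i_j}]_+[-b_{i_jd}]_+,
\]
with all entries taken in $\Qalt{j-1}$. The first step is to show that the subtracted term always vanishes. That is, it never happens that simultaneously $i_j\to n^-$ and $d\to i_j$ in $\Qalt{j-1}$.

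Once this is established, the correction is always nonnegative. Hence $b_{n^-d}(\Qalt{j})\ge b_{n^-d}(\Qalt{0})$ for every $0\le j\le 2k+2$. In $\Qalt{0}$ we have $n^-\to N\ni d$ by \cref{prop:npm-decomposition}, and $\Qalt{0}$ is abundant, so $b_{n^-d}\ge2$. This gives an arrow $n^-\to d$ of weight at least $b_{n^-d}$ in every $\Qalt{j}$.

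To verify the vanishing, I would use \cref{lem:alternating-ew-edges}. It says the orientations of the edges from $e$ and $w$ to $d$ and to $n^-$ are governed by \cref{lem:rank-three-alternating}, applied to two triangles:
\begin{itemize}
\item $\{e,w,n^-\}$, run forward from $j=0$ with elbow $e$ and $n^-$ as the source;
\item $\{e,w,d\}$, run backward from $j=2k$ with elbow $w$ and $d$ as the sink of $\Qalt{2k}|_{\{e,w,d\}}$, followed by the two extra steps.
\end{itemize}

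The key observation I expect is a parity pattern. In the forward triangle, the edge between $n^-$ and the vertex about to be mutated always points from $n^-$ toward it. Concretely:
\begin{itemize}
\item at odd steps $e$ is mutated, and in $\Qalt{2s}$ we have $n^-\to e$, since $b_{31}(Q^{2s})>0$ with $3=n^-$ and $1=e$;
\item at even steps $w$ is mutated, and in $\Qalt{2s+1}$ we have $n^-\to w$, since $b_{32}(Q^{2s+1})>0$.
\end{itemize}
If this holds, then $[-b_{n^-i_j}]_+=0$ at every step, and the subtracted term vanishes for free. No information about the $d$-edges is needed at all.

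So the main work is matching the labeling of \cref{lem:rank-three-alternating} to the $\{e,w,n^-\}$ triangle. We need $b_{n^-e}>0$ and $b_{ew}>0$ in $\Qalt{0}$, so that $n^-\mapsto3$, $e\mapsto1$, $w\mapsto2$, and $e$ is the elbow because $n^-\to e\to w$ and $n^-\to w$. Then the lemma yields positivity of $b_{13}(Q^{2s+1})$, $b_{32}(Q^{2s+1})$, $b_{31}(Q^{2s})$ and $b_{23}(Q^{2s})$. These read as $e\to n^-$ after each $e$-mutation, $n^-\to w$ before each $w$-mutation, $n^-\to e$ before each $e$-mutation, and $w\to n^-$ after each $w$-mutation. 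This is exactly the needed pattern for all $j\le 2k+2$, because the lemma covers every $s\ge0$ with no upper bound, and the $s=0$ case ($\Qalt{0}$) is just the initial orientation.

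The main obstacle is simply bookkeeping the index conventions; no issue with $D$ or with the end steps arises. As a sanity check, I would confirm that at $j=2k+1,2k+2$ the $n^-$-edges are still read from the forward run. They are, since \cref{lem:alternating-ew-edges} applies the forward run to $\{e,w,n^-\}$ throughout.
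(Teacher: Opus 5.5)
Your proof is correct, but it organizes the sign analysis differently from the paper.

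The paper tracks both endpoints of the edge. It combines the backward run on $\{e,w,d\}$ from $\Qalt{2k}$ with the forward run on $\{e,w,n^-\}$. From these it observes two things:
\begin{itemize}
\item before each of the first $2k$ mutations, both $n^-$ and $d$ point into the mutated vertex, so the edge is literally unchanged;
\item $n^-\to e\to d$ in $\Qalt{2k}$ and $n^-\to w\to d$ in $\Qalt{2k+1}$, so the last two mutations strictly increase the weight.
\end{itemize}

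You instead extract from the forward run alone that $n^-\to i_j$ in $\Qalt{j-1}$ at every step. This comes from $b_{31}(Q^{2s})>0$ and $b_{32}(Q^{2s+1})>0$. Both rely on $p_\ell(a)\ge1$ for $\ell\ge0$, which follows from the recurrence by an easy induction when $a\ge2$ and is worth stating. Consequently the subtractive term $[-b_{n^-i_j}]_+[-b_{i_jd}]_+$ vanishes identically, and $b_{n^-d}$ is nondecreasing along the whole sequence.

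Your route is shorter and needs no information about the $d$-edges. It also avoids splitting the argument into $j\le2k$ and the two final steps. The paper's route gives the sharper description $b_{n^-d}(\Qalt{j})=b_{n^-d}$ for $j\le2k$, with strict growth afterwards. Only the lower bound in the statement is needed later, so this extra precision is not required.
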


\begin{proof}
Before each of the first $2k$ mutations, $n^-$ and $d$ lie on the same
side of the mutated vertex, by \cref{lem:alternating-ew-edges}; hence their
edge is unchanged. In $\Qalt{2k}$ we have $n^-\to e\to d$, so mutation at
$e$ increases the weight of $n^-\to d$. In $\Qalt{2k+1}$ we have
$n^-\to w\to d$, so mutation at $w$ increases it once more. Thus
$n^-\to d$ throughout the sequence, with weight at least $b_{n^-d}\ge2$.
\end{proof}

\begin{lemma}\label{lem:alternating-d-nplus}
For every $d\in D$, $n^+\in\nplus$, and $0\le j\le2k+2$, the quiver
$\Qalt{j}$ contains an arrow $d\to n^+$ of weight at least
$b_{dn^+}\ge2$.
\end{lemma}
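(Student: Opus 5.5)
The plan mirrors the proof of \cref{lem:alternating-d-nminus}, but with more bookkeeping, because $d$ and $n^+$ do not always lie on the same side of the mutated vertex. By \cref{lem:alternating-ew-edges}, the orientations and weights of the edges joining $e$ and $w$ to $d$ and to $n^+$ are known at every step of \eqref{eq:alternating-sequence}. Those for $d$ come from \cref{lem:rank-three-alternating} applied backward from $\Qalt{2k}|_{\{e,w,d\}}$, whose elbow is $w$. Those for $n^+$ come from the explicit first two steps followed by \cref{lem:rank-three-alternating} applied forward from $\Qalt{2}|_{\{e,w,n^+\}}$, whose elbow is $e$. At the step $\Qalt{j}\to\Qalt{j+1}$ with mutated vertex $x=i_{j+1}$, the mutation rule gives
\[
 b_{dn^+}(\Qalt{j+1})=b_{dn^+}(\Qalt{j})+[b_{dx}]_+[b_{xn^+}]_+-[-b_{dx}]_+[-b_{xn^+}]_+ ,
\]
with all entries taken in $\Qalt{j}$. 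So I will tabulate the signs of $b_{dx}(\Qalt{j})$ and $b_{xn^+}(\Qalt{j})$ for each $j$.

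I will carry this out in three stages. First, at $j=0$ we have $d\to e\to n^+$, so the first mutation at $e$ increases the arrow $d\to n^+$ by $b_{de}b_{en^+}\ge4$. Second, for the intermediate steps I expect $d$ and $n^+$ to lie on the same side of the mutated vertex, so the edge does not change. This should follow from the cyclic orientations in \cref{lem:rank-three-alternating}, since $n^+$ plays the role of the source vertex $3$ and $d$ plays the role of the sink relative to the alternating pair. Third, I will treat the last mutations near $j=2k$ and $j=2k+1$ separately, as in \cref{lem:alternating-d-nminus}. At $\Qalt{2k}$ we have $e\to d$ (since $e$ is the source of $\Qalt{2k}|_N$), while the forward computation suggests $n^+\to e$. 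That configuration would \emph{decrease} the edge $d\to n^+$ by $b_{ed}(\Qalt{2k})\,b_{n^+e}(\Qalt{2k})$.

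The main obstacle is this possible decrease at the late steps. I would first recheck the signs directly: in each $\Qalt{j}|_{\{e,w,n^+\}}$ the vertex $n^+$ has the fixed role of vertex $3$, and the sign pattern of \cref{lem:rank-three-alternating} may show that $d$ and $n^+$ are in fact on the same side of $e$ at $j=2k$. If a genuine decrease does occur, I would compute the accumulated weight exactly and compare the decrease with the earlier increase, using the Chebyshev expressions $p_\ell=p_\ell(b_{ew})$. The increase at step one involves weights of $\Qalt{0}$, and the decrease involves weights of $\Qalt{2k}$. Both can be written in terms of $b_{ew}$ and the weights at the end, via the inverse recurrence of \cref{lem:rank-three-alternating}. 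The goal is an identity such as the net change being $\ge0$, possibly exactly $0$ after the round trip, because mutation is an involution on the rank-three pieces. If the direct comparison becomes unwieldy, the fallback is to apply \cref{lem:rank-three-edge-invariance} on the middle stretch, where $e$ or $w$ is an ascent or descent in both triangles $\{x,y,d\}$ and $\{x,y,n^+\}$. That reduces the claim to checking only the first mutation and the last two mutations. In every case I will verify that the sign of $b_{dn^+}$ stays positive and that its absolute value stays at least $b_{dn^+}\ge2$, the bound coming from abundance of $\Qalt{0}$.
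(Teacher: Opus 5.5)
Your strategy is the paper's: follow $b_{dn^+}$ through the $2k+2$ mutations with the mutation rule, reading the edges from $e$ and $w$ to $d$ and $n^+$ off \cref{lem:rank-three-alternating}. As written, though, the bookkeeping is wrong at the second step. The mutation at $w$ applied to $Q^1$ does change the edge. In $Q^1|_{\{e,w,n^+\}}$ the vertex $w$ is a source, so $w\to n^+$, while $d\to w$ with $b_{dw}(Q^1)=-(p_1b_{ed}+p_0b_{wd})>0$. Hence $d\to w\to n^+$, and the edge increases a second time.

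For the same reason your fallback is misstated. The vertex $w$ is neither an ascent nor a descent of $Q^1|_{\{e,w,n^+\}}$, so \cref{lem:rank-three-edge-invariance} covers only the mutations applied to $Q^j$ with $2\le j<2k$. The first two and the last two mutations must be computed by hand. Both late mutations genuinely decrease the edge, since $n^+\to e\to d$ in $Q^{2k}$ and $n^+\to w\to d$ in $Q^{2k+1}$. Your hope that $d$ and $n^+$ might lie on the same side of $e$ there is therefore false.

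The error in the second step is not cosmetic. Take $k=1$ with $d\xrightarrow{6}e\xrightarrow{2}w\xrightarrow{10}d$ and $e\xrightarrow{2}n^+$, $w\xrightarrow{2}n^+$, $d\xrightarrow{2}n^+$. The weights of $d\to n^+$ for $j=0,\ldots,4$ are $2,14,18,14,2$. Dropping the second increase would give $2,14,14,10,-2$.

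The second gap is that the cancellation, which is the whole content of the lemma, is only hoped for. Involutivity of mutation does not give it, because $(e,w)^{k+1}$ is not a round trip. The paper adds the two early increases to the decrease $-b_{de}(Q^{2k})\,b_{en^+}(Q^{2k})$ and simplifies. This amounts to the identities $p_{\ell+1}p_{\ell-1}=p_\ell^2-1$ and $p_\ell p_{\ell-1}-p_{\ell+1}p_{\ell-2}=p_1$, and it yields
\[
 b_{dn^+}(Q^{2k+1})=b_{dn^+}-(p_{2k-1}b_{ed}+p_{2k-2}b_{wd})(p_{2k-1}b_{en^+}+p_{2k-2}b_{wn^+}).
\]
The paper then recognizes the two factors as exactly $b_{dw}(Q^{2k+1})<0$ and $b_{n^+w}(Q^{2k+1})>0$. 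So the weight at $j=2k+1$ equals $b_{dn^+}+|b_{dw}(Q^{2k+1})|\,|b_{n^+w}(Q^{2k+1})|$. The final mutation at $w$ subtracts precisely this product, which returns the weight to $b_{dn^+}$. Combined with the unchanged middle stretch, this gives the bound at every $j$. Without this factorization, or an equivalent argument, your proposal does not show that the weight never falls below $b_{dn^+}$.
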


\begin{proof}
In $\Qalt{0}$, we have $d\to n^+$. Mutation at $e$ increases its weight through
$d\to e\to n^+$, and the following mutation at $w$ increases it through
$d\to w\to n^+$. For $2\le j<2k$, the vertices $d$ and $n^+$ lie on the
same side of the mutated vertex, by \cref{lem:alternating-ew-edges}, so their
edge is unchanged.

Applying the Chebyshev recurrence \eqref{eq:rank-three-recurrence} to the full
subquivers on $\{e,w,d\}$ and $\{e,w,n^+\}$, respectively, gives
\begin{weightcalc}
 b_{de}(\Qalt{2k})
  &=-(p_{2k}b_{ed}+p_{2k-1}b_{wd})<0,\\
 b_{en^+}(\Qalt{2k})
  &=-(p_{2k-2}b_{en^+}+p_{2k-3}b_{wn^+})<0.
 \end{weightcalc}
Thus in $\Qalt{2k}$ we have $n^+\to e\to d$, and mutation at $e$ gives
\begin{weightcalc}
 b_{dn^+}(\Qalt{2k+1})
 &=b_{dn^+}(\Qalt{2k})
   -b_{de}(\Qalt{2k})b_{en^+}(\Qalt{2k})\\
 &=b_{dn^+}-b_{ed}b_{en^+}
   -(p_1b_{ed}+p_0b_{wd})b_{wn^+}\\
 &\quad
   -(p_{2k}b_{ed}+p_{2k-1}b_{wd})
    (p_{2k-2}b_{en^+}+p_{2k-3}b_{wn^+})\\
 &=b_{dn^+}-(p_{2k-1}b_{ed}+p_{2k-2}b_{wd})
   (p_{2k-1}b_{en^+}+p_{2k-2}b_{wn^+}).
 \end{weightcalc}
Applying the Chebyshev recurrence \eqref{eq:rank-three-recurrence} to the full
subquivers on $\{e,w,d\}$ and $\{e,w,n^+\}$, respectively, gives
\begin{weightcalc}
 b_{dw}(\Qalt{2k+1})&=p_{2k-1}b_{ed}+p_{2k-2}b_{wd},\\
 b_{n^+w}(\Qalt{2k+1})&=p_{2k-1}b_{en^+}+p_{2k-2}b_{wn^+}.
\end{weightcalc}
\Needspace{9\baselineskip}
Thus in $\Qalt{2k+1}$ we have $n^+\to w\to d$, and
\begin{weightcalc}
 b_{dn^+}(\Qalt{2k+1})
 &=b_{dn^+}
  +\lvert b_{dw}(\Qalt{2k+1})\rvert
   \cdot\lvert b_{n^+w}(\Qalt{2k+1})\rvert\\
 &>b_{dn^+}\ge2,\\[2pt]
 b_{dn^+}(\Qalt{2k+2})
 &=b_{dn^+}(\Qalt{2k+1})
   -b_{dw}(\Qalt{2k+1})b_{wn^+}(\Qalt{2k+1})\\
 &=b_{dn^+}(\Qalt{2k+1})
   -\lvert b_{dw}(\Qalt{2k+1})\rvert
    \cdot\lvert b_{n^+w}(\Qalt{2k+1})\rvert\\
 &=b_{dn^+}\ge2.
\end{weightcalc}
These inequalities also show that $e$ is a descent of
$\Qalt{2k}|_{\{e,d,n^+\}}$ and $w$ is a descent of
$\Qalt{2k+1}|_{\{w,d,n^+\}}$.
\end{proof}

\begin{lemma}\label{lem:alternating-nminus-nplus}
For every $n^-\in\nminus$, $n^+\in\nplus$, and $1\le j\le2k+2$, the quiver
$\Qalt{j}$ contains the arrow $n^-\to n^+$ of weight at least $2$.
\end{lemma}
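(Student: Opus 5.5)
The plan is to track the single edge between a fixed $n^-\in\nminus$ and $n^+\in\nplus$ through the alternating sequence \eqref{eq:alternating-sequence}. The tool is the mutation rule $b'_{n^-n^+}=b_{n^-n^+}+[b_{n^-x}]_+[b_{xn^+}]_+-[-b_{n^-x}]_+[-b_{xn^+}]_+$. When $n^-$ and $n^+$ lie on the same side of the mutated vertex $x$, the edge is unchanged. When $n^-\to x\to n^+$, the value $b_{n^-n^+}$ strictly increases. The only danger is the configuration $n^+\to x\to n^-$, and the work is to show it never occurs for $1\le j\le 2k+1$. The statement starts at $j=1$ because in $\Qalt{0}$ the edge may point $n^+\to n^-$ in the $\Rclass$-case, for instance when $r\in\{n^-,n^+\}$.

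\emph{Step $0\to1$ (mutation at $e$).} By \cref{prop:npm-decomposition}, $n^-\to e\to n^+$ in $\Qalt{0}$. If $b_{n^-n^+}\ge0$, abundance gives $b_{n^-n^+}(\Qalt{1})\ge b_{n^-e}b_{en^+}\ge4$.

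If instead $n^+\to n^-$, then $\Qalt{0}|_{\{e,n^-,n^+\}}$ is cyclic. By \cref{prop:cyclic-subquivers}(ii) this is impossible in the $\SSclass$-case. In the $\Rclass$-case the descent of this subquiver is $r\ne e$, so by \cref{lem:rank-three-descent} $e$ is an ascent. The new value is $b_{n^-e}b_{en^+}-|b_{n^-n^+}|$. If it were $\le0$, its absolute value would be smaller than $|b_{n^-n^+}|$, contradicting that $e$ is an ascent. Hence it is positive and exceeds $|b_{n^-n^+}|\ge2$. So $n^-\to n^+$ in $\Qalt{1}$ with weight at least $2$ in both cases.

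\emph{Orientations relative to $e$ and $w$.} Next I record the orientations of $e,w$ relative to $n^-$ and $n^+$, using \cref{lem:alternating-ew-edges} and \cref{lem:rank-three-alternating}.

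For $n^-$: in $\Qalt{0}$ the subquiver on $\{e,w,n^-\}$ is acyclic with $n^-\to e\to w$, so it has elbow $e$ and plays the role of vertex $3$ in \cref{lem:rank-three-alternating}. The formulas there give, for all $s\ge0$, $e\to n^-\to w$ in $\Qalt{2s+1}$. For $s\ge1$ they give $w\to n^-\to e$ in $\Qalt{2s}$.

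For $n^+$: $e$ is a source in $\Qalt{0}|_{\{e,w,n^+\}}$ and $w\to n^+$. Hence in $\Qalt{1}$ we get $w\to e$, $n^+\to e$, $w\to n^+$. In $\Qalt{2}$ we get $n^+\to e\to w$ and $n^+\to w$, so $n^+$ is again the vertex $3$ relative to elbow $e$. Applying \cref{lem:rank-three-alternating} from $j=2$ gives $e\to n^+\to w$ in $\Qalt{2s+1}$ and $w\to n^+\to e$ in $\Qalt{2s}$ for $s\ge1$.

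\emph{Step $1\to2$ (mutation at $w$).} In $\Qalt{1}$ we have $n^-\to w\to n^+$, so the mutation at $w$ strictly increases $b_{n^-n^+}$, which stays $\ge2$.

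\emph{Steps $j\to j+1$ for $2\le j\le 2k+1$.} If $j$ is even, the mutation is at $e$, and both $n^-\to e$ and $n^+\to e$ hold in $\Qalt{j}$. If $j$ is odd, the mutation is at $w$, and both $n^-\to w$ and $n^+\to w$ hold. In either case $n^-$ and $n^+$ are on the same side of the mutated vertex, so the edge is unchanged. This gives $n^-\to n^+$ of weight $\ge2$ for all $1\le j\le2k+2$.

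The main obstacle is the first step: showing that the sign flips correctly when the original edge is $n^+\to n^-$. That is exactly where \cref{prop:cyclic-subquivers} and the ascent property from \cref{lem:rank-three-descent} are needed. The remaining steps are bookkeeping of signs from \cref{lem:rank-three-alternating}, where one must take care that the $n^+$-triangle only becomes "elbow $e$" at $j=2$.
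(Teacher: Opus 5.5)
Your proof is correct and follows the paper's argument. The first mutation at $e$ uses the elbow/ascent dichotomy from \cref{prop:cyclic-subquivers}, and the mutation at $w$ increases the weight. For the remaining steps you check directly, from \cref{lem:rank-three-alternating}, that $n^-$ and $n^+$ lie on the same side of the mutated vertex, which is exactly what the paper obtains by citing \cref{lem:rank-three-edge-invariance}. One small indexing slip: in $\Qalt{2}$ one has $n^+\to w$, so ``$w\to n^+\to e$ in $\Qalt{2s}$'' holds only for $s\ge2$; this is harmless, since you only use $n^+\to e$ there, and that holds for every even $j\ge2$.
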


\begin{proof}
In $\Qalt{0}$, we have $n^-\to e\to n^+$.

\emph{Case 1:} $\Qalt{0}|_{\{e,n^-,n^+\}}$ is acyclic.
Then $e$ is its elbow.

\emph{Case 2:} $\Qalt{0}|_{\{e,n^-,n^+\}}$ is cyclic.
Then $\Qalt{0}\in\Rclass$, and this cyclic subquiver has descent
$r\in\{n^-,n^+\}$ by \cref{prop:cyclic-subquivers}; hence $e$ is an
ascent.

In either case,
mutation at $e$ gives an arrow $n^-\to n^+$ of weight at least $2$ in
$\Qalt{1}$. Mutation at $w$ increases its weight, and no subsequent mutation
changes this edge, by \cref{lem:rank-three-edge-invariance}.
\end{proof}

The resulting orientations are shown in \cref{fig:alternating-orientations}.

\begin{figure}[H]
\centering
\begin{minipage}{\textwidth}
\centering
\noindent $k=1$\par\smallskip
\begin{tikzpicture}[
  every node/.style={inner sep=0pt,align=center},
  segline/.style={line width=.45pt},
  lab/.style={font=\scriptsize,fill=none,inner sep=1pt}]
\node (q1one) at (-4.65,0) {\begin{tabular}{c}{\scriptsize $\Qalt{1}$}\\[-1mm]\scalebox{.60}{\GraphOne}\end{tabular}};
\node (q2one) at (-1.55,0) {\begin{tabular}{c}{\scriptsize $\Qalt{2}$}\\[-1mm]\scalebox{.60}{\GraphTwoAtOne}\end{tabular}};
\node (q3one) at (1.55,0) {\begin{tabular}{c}{\scriptsize $\Qalt{3}$}\\[-1mm]\scalebox{.60}{\GraphTwoKOne}\end{tabular}};
\node (q4one) at (4.65,0) {\begin{tabular}{c}{\scriptsize $\Qalt{4}$}\\[-1mm]\scalebox{.60}{\GraphTwoKTwo}\end{tabular}};
\draw[segline] (q1one.east) -- node[lab,above] {$w$} (q2one.west);
\draw[segline] (q2one.east) -- node[lab,above] {$e$} (q3one.west);
\draw[segline] (q3one.east) -- node[lab,above] {$w$} (q4one.west);
\end{tikzpicture}

\medskip
\noindent $k>1$\par\smallskip
\begin{tikzpicture}[
  every node/.style={inner sep=0pt,align=center},
  segline/.style={line width=.45pt},
  lab/.style={font=\scriptsize,fill=none,inner sep=1pt}]
\node (q1) at (-5.15,0) {\begin{tabular}{c}{\scriptsize $\Qalt{1}$}\\[-1mm]\scalebox{.57}{\GraphOne}\end{tabular}};
\node (q2) at (-2.00,0) {\begin{tabular}{c}{\scriptsize $\Qalt{2}$}\\[-1mm]\scalebox{.57}{\GraphTwo}\end{tabular}};
\node (leftdots) at (-.35,0) {{\scriptsize $\cdots$}};
\node (qeven) at (1.35,0) {\begin{tabular}{c}{\scriptsize $\Qalt{2j}$, $2\le j<k$}\\[-1mm]\scalebox{.57}{\GraphEven}\end{tabular}};
\node (qodd) at (4.15,0) {\begin{tabular}{c}{\scriptsize $\Qalt{2j+1}$, $1\le j<k$}\\[-1mm]\scalebox{.57}{\GraphOdd}\end{tabular}};
\node (rightdots) at (5.55,0) {{\scriptsize $\cdots$}};
\node (q2k) at (-4.35,-2.38) {\begin{tabular}{c}{\scriptsize $\Qalt{2k}$}\\[-1mm]\scalebox{.57}{\GraphTwoK}\end{tabular}};
\node (q2k1) at (0,-2.38) {\begin{tabular}{c}{\scriptsize $\Qalt{2k+1}$}\\[-1mm]\scalebox{.57}{\GraphTwoKOne}\end{tabular}};
\node (q2k2) at (4.35,-2.38) {\begin{tabular}{c}{\scriptsize $\Qalt{2k+2}$}\\[-1mm]\scalebox{.57}{\GraphTwoKTwo}\end{tabular}};
\draw[segline] (q1.east) -- node[lab,above] {$w$} (q2.west);
\draw[segline] (q2.east) -- node[lab,above] {$e$} (leftdots.west);
\draw[segline] (leftdots.east) -- node[lab,above] {$w$} (qeven.west);
\draw[segline] (qeven.east) -- node[lab,above] {$e$} (qodd.west);
\draw[segline] (qodd.east) -- node[lab,above] {$w$} (rightdots.west);
\draw[segline] (q2k.east) -- node[lab,above] {$e$} (q2k1.west);
\draw[segline] (q2k1.east) -- node[lab,above] {$w$} (q2k2.west);
\end{tikzpicture}
\end{minipage}
\caption{Orientations along the alternating mutation sequence
\eqref{eq:alternating-sequence}.}
\label{fig:alternating-orientations}
\end{figure}

\begin{lemma}\label{lem:alternating-interior}
For $1\le j<2k$, $(\Qalt{j},v)$ satisfies F1--F3 for every
$v\notin\{e,w\}$.
\end{lemma}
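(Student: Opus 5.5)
The plan is to verify F1--F3 directly for each $v\in D\cup\nminus\cup\nplus$ in $\Qalt{j}$, $1\le j<2k$. We read off the orientations from \cref{lem:alternating-ew-edges}, \cref{lem:alternating-d-nminus}, \cref{lem:alternating-d-nplus}, \cref{lem:alternating-nminus-nplus} and \cref{fig:alternating-orientations}.

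\textbf{Step 1: cyclic triples.} First list all cyclic triples of $\Qalt{j}$ and their descents. By \cref{lem:alternating-ew-edges}, the subquivers on $D$, $\nminus$, $\nplus$ are unchanged and acyclic. Any cyclic triple must therefore meet $\{e,w\}$, or consist of vertices from distinct blocks.
\begin{itemize}
\item Triples avoiding $\{e,w\}$: the arrows $n^-\to d\to n^+$ and $n^-\to n^+$ (for $j\ge1$) make all triples across blocks acyclic. Triples inside a block together with another block are also acyclic, since each block sends all its arrows uniformly to the others.
\item Triples $\{e,w,x\}$: by \cref{lem:rank-three-alternating}, applied as in \cref{lem:alternating-ew-edges}, these are cyclic for $1\le j<2k$. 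Their descent is the mutated vertex $i_j$ or the other vertex of $\{e,w\}$; in either case the third vertex $x$ is an ascent by \cref{lem:rank-three-descent}(ii). This is because the largest weight in these Chebyshev triangles is never the edge $ew$ (weight $b_{ew}$ stays fixed while the others grow).
\item Triples $\{y,x,z\}$ containing exactly one of $e,w$: cyclic ones appear as in \cref{fig:alternating-orientations}, for example $\{w,d,n^-\}$ or $\{e,d,n^+\}$. We check, via the weight formulas in the proofs of \cref{lem:alternating-d-nminus,lem:alternating-d-nplus}, that the descent is the vertex of $\{e,w\}$. The edge opposite it, such as $n^-d$ or $dn^+$, has weight that a mutation there would decrease; equivalently, it is the largest weight, since the weights to $e,w$ grow like $p_\ell$ products.
\end{itemize}
From this, F1 holds for every $v\notin\{e,w\}$.

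\textbf{Step 2: F2.} For $1\le j<2k$, every $v\notin\{e,w\}$ lies in a cyclic triple $\{e,w,v\}$. Hence $v$ is neither a source nor a sink.

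\textbf{Step 3: F3.} Suppose $v$ were the apex of a vortex. Its cyclic triple would have to avoid $v$, and $v$ would be a source or sink relative to it. But every cyclic triple contains $e$ or $w$, while $v$ has one arrow in and one arrow out among $\{e,w\}$ whenever $\{e,w,v\}$ is cyclic. This rules out cyclic triples $\{e,w,x\}$. For a cyclic triple with a single vertex of $\{e,w\}$ and two other vertices, we use the block orientation. For example, if the triple is $\{w,d,n^-\}$ and $v=n^+$, the arrows $d\to n^+$ and $n^-\to n^+$ are fixed, so $v$ could only be a sink. Then we must have $w\to n^+$, and we check from the figure that in that case $n^+$ lies in the cyclic triple $\{e,w,n^+\}$ but the relevant triple is acyclic, giving a contradiction. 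The same check is done for each remaining pattern, up to the symmetry of global reversal in \cref{rem:global-reversal}.

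\textbf{Main obstacle.} The main obstacle is Step 1's descent identification for mixed triples. This needs the explicit weights $b_{dw},b_{n^\pm w},b_{de},\dots$ from \cref{lem:rank-three-alternating} at parity-dependent indices $j$. The plan is to treat $j$ odd and $j$ even separately, and to compare the weights using monotonicity of $p_\ell$ for $a\ge2$.
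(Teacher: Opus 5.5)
Your plan has the same shape as the paper's proof: read the orientations off \cref{fig:alternating-orientations}, find the cyclic triples and their descents, then check F2 and F3. However, the case analysis rests on a false claim, namely that $\{e,w,x\}$ is cyclic for every $x\notin\{e,w\}$ and every $1\le j<2k$. This fails for $x=n^+\in\nplus$ at $j\in\{1,2\}$. The proof of \cref{lem:alternating-ew-edges} records that $e$ is a source of $\Qalt{0}|_{\{e,w,n^+\}}$, that $w$ is a source of $\Qalt{1}|_{\{e,w,n^+\}}$, and that $\Qalt{2}|_{\{e,w,n^+\}}$ is acyclic with elbow $e$. So this triple is cyclic only from $j=3$ on.

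This error affects two steps.
\begin{itemize}
\item Step~2 gives no reason why $n^+$ is neither a source nor a sink at $j=1,2$.
\item Step~3 genuinely breaks at $j=2$ when $k\ge2$. There $n^+\to e$ and $n^+\to w$, so $n^+$ is a source relative to $\{e,w\}$. Your ``one arrow in, one arrow out'' argument therefore cannot exclude a vortex with apex $n^+$ over a cyclic triple $\{e,w,x\}$. That case needs a separate check. By \cref{lem:alternating-d-nplus,lem:alternating-nminus-nplus} we have $d\to n^+$ and $n^-\to n^+$, and $\{e,w,n^{+\prime}\}$ is acyclic at $j=2$ for every other $n^{+\prime}\in\nplus$.
\end{itemize}

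Your list of cyclic triples meeting $\{e,w\}$ in exactly one vertex is also wrong. The triple $\{w,d,n^-\}$, which is the example you use in Step~3, is never cyclic for $1\le j<2k$, so that check is vacuous. The cyclic ones are exactly:
\begin{itemize}
\item $\{e,d,n^+\}$ and $\{e,n^-,n^+\}$ at $j=1$;
\item $\{w,d,n^+\}$ and $\{w,n^-,n^+\}$ at $j=2$.
\end{itemize}
For $j\ge3$, every cyclic triple has the form $\{e,w,x\}$.

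This removes what you call the main obstacle. Each mixed cyclic triple is created by the mutation at $i_j$, which has just increased (in absolute value) the opposite weight $b_{dn^+}$ or $b_{n^-n^+}$. Mutating back at $i_j$ therefore decreases that weight, so $i_j$ is the descent. No parity-dependent comparison of Chebyshev weights is needed. Your heuristic that the opposite edge is largest ``since the weights to $e,w$ grow'' points the wrong way: at $j=1,2$ it is the edges $dn^+$ and $n^-n^+$ that were just inflated.

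With the corrected list, F2 for $n^+$ at $j=1,2$ follows from $d\to n^+\to e$. F3 can then be finished in either of two ways:
\begin{itemize}
\item by your apex check, done triple by triple;
\item as in the paper: two vertices from the same block give paired cyclic triples, and each of the five remaining $4$-vertex types has either no cyclic triple or at least two. Hence $\Qalt{j}$ contains no vortex at all.
\end{itemize}
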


\begin{proof}
Fix $1\le j<2k$ and $v\notin\{e,w\}$. By
\cref{lem:alternating-ew-edges,lem:alternating-d-nminus,lem:alternating-d-nplus,lem:alternating-nminus-nplus},
the full subquivers on $D$, $\nminus$, and $\nplus$ are acyclic. For any two
distinct nonempty sets $A,B$ among
\[
 D,\qquad \{e\},\qquad \{w\},\qquad \nminus,\qquad \nplus
\]
either $A\to B$ or $B\to A$ in $\Qalt{j}$, with the direction shown in
\cref{fig:alternating-orientations}. Hence every cyclic subquiver contains at most one vertex
from each of $D$, $\nminus$, and $\nplus$. By \cref{lem:alternating-ew-edges,lem:alternating-d-nminus,lem:alternating-d-nplus,lem:alternating-nminus-nplus},
every cyclic subquiver containing $v$ has descent $e$ or $w$.
The orientations in \cref{fig:alternating-orientations} show that $v$ is
neither a source nor a sink. A full $4$-vertex subquiver containing two
vertices from the same set is either acyclic or contains two cyclic subquivers
obtained by exchanging these vertices. Hence it is not a vortex. Every
remaining full $4$-vertex subquiver has one of the forms
\[
 \Qalt{j}|_{\{e,w,n^-,n^+\}},\quad
 \Qalt{j}|_{\{w,d,n^-,n^+\}},\quad
 \Qalt{j}|_{\{e,d,n^-,n^+\}},\quad
 \Qalt{j}|_{\{e,w,d,n^+\}},\quad
 \Qalt{j}|_{\{e,w,d,n^-\}},
\]
whenever the indicated vertices exist. For $1\le j<2k$, the orientations in
\cref{fig:alternating-orientations} show that each listed full subquiver
contains either no cyclic subquiver or at least two cyclic subquivers. Hence
none is a vortex. Thus no $v\notin\{e,w\}$ is the apex of a vortex, and
$(\Qalt{j},v)$ satisfies F1--F3.
\end{proof}

\Needspace{6\baselineskip}
\begin{lemma}\label{lem:alternating-boundary}
The assertions in \cref{thm:alternating-mutations}\textup{(iii)--(v)} hold.
\end{lemma}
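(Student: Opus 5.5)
The proof treats $\Qalt{2k}$, $\Qalt{2k+1}$ and $\Qalt{2k+2}$ separately. For each one, we first read off the orientations from \cref{fig:alternating-orientations}; these are justified by \cref{lem:alternating-ew-edges,lem:alternating-d-nminus,lem:alternating-d-nplus,lem:alternating-nminus-nplus}. We then classify every cyclic subquiver by its descent, using the weight formulas of \cref{lem:rank-three-alternating} and the computations in the proof of \cref{lem:alternating-d-nplus}. There are two kinds of assertion.
\begin{itemize}
\item For the ``F1--F3'' assertions, namely (iii)(c), (iv)(a), (v)(b) and (v)(d), we argue as in \cref{lem:alternating-interior}. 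Every cyclic subquiver containing $v$ has descent in $\{e,w\}$, or in $\{e,w\}\setminus\{v\}$ in case (v)(d). The vertex $v$ is neither a source nor a sink. Any full $4$-vertex subquiver through $v$ either repeats a block, and so contains zero or at least two cyclic subquivers, or is one of the five listed types, which we check is not a vortex with apex $v$.
\item For the ``$\mu_v(\Qalt{j})\in\Rclass$'' assertions, we apply \cref{lem:mutation-to-r}. This needs two things.
\begin{enumerate}
\item We exhibit a nucleus $M$ of $\Qalt{j}$ with $v\to M$ or $M\to v$, of type $1$ or $2k$ as required.
\item We check F1--F3 for $v$ in each of the three full subquivers obtained by deleting one part of the decomposition of $M$.
\end{enumerate}
\end{itemize}

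\textbf{Candidate nuclei.}
\begin{itemize}
\item (iii)(a), $v=d\in D$, $j=2k$. We take $M=\{e,w\}\sqcup\nminus$ (or its appropriate analogue) with alternating vertex $w$. A single block is the $w$-side, so undoing the alternation gives the acyclic orientation of \cref{def:special-nucleus-types}(ii), and $M$ is a $2k$-nucleus. Here $e\to d$, $w\to d$ and $n^-\to d$, so $M\to d$.
\item (iii)(b), $v\in\nminus$. We take $M=\{e,w,n^+\text{-part}\}\cup D$ arranged as a $1$-nucleus, modelled on Case~3 of \cref{thm:r1-sequences}. The point is that in $\Qalt{2k}$ the vertex $e$ is a source of $\Qalt{2k}|_N$, so after one mutation at $e$ the triangle structure becomes $U\to e\to D$ with $U\to D$.
\item (iv)(b),(c) and (v)(a),(c). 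We choose $M$ analogously from \cref{fig:alternating-orientations}, with global descent the vertex that is the descent of all cyclic subquivers avoiding $v$. This is $e$ in $\Qalt{2k}$ and $w$ in $\Qalt{2k+1}$, the latter shown by the descent statements at the end of \cref{lem:alternating-d-nplus}. For $\Qalt{2k+2}$ we use the analogous computation at $w$.
\end{itemize}
In each case, being a $1$-nucleus or $2k$-nucleus is verified by mutating once at the descent, respectively $k$ times alternately. This is possible because the relevant $3$-vertex pieces are governed by \cref{lem:rank-three-alternating}, so the required acyclic orientation follows from the Chebyshev formulas.

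\textbf{Conditions of \cref{lem:mutation-to-r}.} F1 for $v$ in the three deleted subquivers holds because every cyclic subquiver there has descent equal to the descent of $M$, which differs from $v$. F2 and F3 are read off from the orientations, exactly as in Case~3 of \cref{thm:r1-sequences}: the only potential vortex with apex $v$ has its cyclic triangle using one vertex from each part of $M$, and each deleted subquiver omits one of them.

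\textbf{Main obstacle.} The main difficulty is showing that $M$ is actually a \emph{closed} nucleus of $\mu_v(\Qalt{j})$ of the right type. That is the role of the final paragraph of the proof of \cref{lem:mutation-to-r}: every cyclic subquiver with descent the descent of $M$ must lie inside $M$. Equally delicate is confirming the cyclic/acyclic status of the mixed triangles $\{e,d,n^+\}$ and $\{w,d,n^+\}$ at $j=2k,2k+1$. For these we use the explicit identity $b_{dn^+}(\Qalt{2k+2})=b_{dn^+}$ and the strict inequality at $j=2k+1$ from \cref{lem:alternating-d-nplus}. If a case resists, we fall back on global reversal (\cref{rem:global-reversal}) to reduce (iv)(c) to (iv)(b).
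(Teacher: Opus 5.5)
Your template is the paper's: the four F1--F3 items are checked directly, as in \cref{lem:alternating-interior}, and the six $\Rclass$-items go through \cref{lem:mutation-to-r}. The gap is in the content of this lemma, which is the choice of the nucleus $M$ in each $\Rclass$-item; there your proposal is wrong in several places.

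By \cref{lem:rank-three-alternating}, the triangle $\{e,w,n^-\}$ of $\Qalt{2k}$ has its largest weight on $\{e,n^-\}$, so its descent is $w$. Hence in (iii)(a) the nucleus $\{e,w\}\sqcup\nminus$ has global descent $w$ and alternating vertex $e$, which is why $\mu_{(w,e)^k}$ returns it to the acyclic $\Qalt{0}|_M$. With alternating vertex $w$ you would need $\mu_{(e,w)^k}(\Qalt{2k}|_M)$ to be acyclic, and it is cyclic.

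In (iii)(b) the nucleus must omit $w$. The paper uses the $1$-nucleus $\nplus\sqcup\{e\}\sqcup D$. It works because $\mu_e$ turns it into $\Qalt{2k+1}|_M$ with $D\to e\to\nplus$ and $D\to\nplus$ of weight $>b_{dn^+}\ge2$, by \cref{lem:alternating-d-nplus}; the fact that $e$ is a source of $\Qalt{2k}|_N$ plays no role. Adding $w$ destroys the global descent. Indeed, $\Qalt{2k}|_{\{e,d,n^+\}}$ has descent $e$, while $\Qalt{2k}|_{\{e,w,n^+\}}$ is cyclic with descent $w$ for $k>1$, and $\Qalt{2}|_{\{w,d,n^+\}}$ is cyclic and avoids $e$ for $k=1$.

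Your selection rule, ``the descent of all cyclic subquivers avoiding $v$, namely $e$ in $\Qalt{2k}$ and $w$ in $\Qalt{2k+1}$'', is ill-defined and gives wrong answers. In $\Qalt{2k+1}$ the cyclic triangles are $\{e,w,n^\pm\}$ and $\{e,d,n^-\}$, with descent $e$, and $\{w,d,n^+\}$, with descent $w$.
\begin{itemize}
\item For $v=n^-$, the triangles avoiding $v$ have both descents, so the rule does not decide. The correct nucleus is $\nplus\sqcup\{w\}\sqcup D$.
\item For $v=n^+$, the correct nucleus is $D\sqcup\{e\}\sqcup\nminus$ with descent $e$. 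No nucleus with global descent $w$ avoids $n^+$.
\end{itemize}
Global reversal cannot reduce (iv)(c) to (iv)(b). The normalization $U=\{w\}$ was itself fixed by reversal, so reversal sends (iv)(c) to a statement about the case $D=\{w\}$. The two items genuinely need nuclei with different descents.

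The F1 step in your $\Rclass$-bullet is also justified incorrectly. It is false that every cyclic subquiver of the three deleted subquivers has the descent of $M$. One deletion removes that descent entirely and still contains cyclic triangles through $v$. For instance, in (iii)(a) the subquiver $\Qalt{2k}|_{\{w\}^C}$ contains $\{e,d,n^+\}$ with descent $e$. It is not even a nucleus, since $\Qalt{2k}|_{\{e,d,n^-,n^+\}}$ is a vortex with apex $n^-$.

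What makes F1 hold is the fact you already use in your first bullet. Every cyclic subquiver of $\Qalt{2k}$, $\Qalt{2k+1}$ and $\Qalt{2k+2}$ has descent in $\{e,w\}$, so any $v\notin\{e,w\}$ is an ascent by \cref{lem:rank-three-descent}. F2 and F3 must then be verified in each deletion separately, by locating the vortices through $v$ and their apexes, as the paper does.

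Closedness of $M$ in $\mu_v(\Qalt{j})$, which you call the main obstacle, is already supplied by \cref{lem:mutation-to-r}. The real work is the case-by-case identification of $M$. As written, your proposal mislabels the descent in (iii)(a), leaves (iv)(b) undetermined, and gets (iii)(b) and (iv)(c) wrong.
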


\begin{proof}
Assume $\Qalt{0}\in\Rclass_{2k}\cup\SSclass_{2k}$. By
\cref{lem:npm-occupancy}, both $\nminus$ and $\nplus$ are nonempty.
The orientations, cyclic subquivers, descents, and vortex configurations used
below follow from
\cref{lem:alternating-ew-edges,lem:alternating-d-nminus,lem:alternating-d-nplus,lem:alternating-nminus-nplus}.

\textup{(iii)} $j=2k$.

The full subquiver $\Qalt{2k}|_{\{e\}^C}$ is abundant and acyclic if $k>1$,
and is a nucleus with global descent at $w$ if $k=1$.

\textup{(a)} For $d\in D$, the quiver
$\Qalt{0}|_{\{e,w\}\sqcup\nminus}
=\mu_{(w,e)^k}(\Qalt{2k})|_{\{e,w\}\sqcup\nminus}$
is abundant and acyclic. Hence
$\Qalt{2k}|_{\{e,w\}\sqcup\nminus}$ is a $2k$-nucleus with global descent
at $w$. Moreover,
$\{e,w\}\sqcup\nminus\to d\to\nplus$ in $\Qalt{2k}$, so $d$ is a sink in
$\Qalt{2k}|_{\{e,w,d\}\sqcup\nminus}$. In $\Qalt{2k}|_{\{e\}^C}$,
the vertex $d$ is neither a source nor a sink. In
$\Qalt{2k}|_{\{w\}^C}$ and $\Qalt{2k}|_{(\nminus)^C}$, every cyclic
subquiver containing $d$ has descent $e$ or $w$. The only vortices among these
two full subquivers that contain $d$ have the form
$\Qalt{2k}|_{\{e,d,n^-,n^+\}}$, with apex $n^-$. These observations and \cref{lem:basic-f1-f3} show that
$(\Qalt{2k}|_{\{w\}^C},d)$, $(\Qalt{2k}|_{\{e\}^C},d)$, and
$(\Qalt{2k}|_{(\nminus)^C},d)$ satisfy F1--F3. Hence
\cref{lem:mutation-to-r} gives
$\mu_d(\Qalt{2k})\in\Rclass_{2k}$ with return $d$.

\textup{(b)} For $n^-\in\nminus$, the full subquiver
$\Qalt{2k}|_{\nplus\sqcup\{e\}\sqcup D}$ is a $1$-nucleus with global descent
at $e$. The vertex $n^-$ is neither a source nor a sink in
$\Qalt{2k}|_{\{e\}^C}$, and $\Qalt{2k}|_{D^C}$ is a nucleus
with global descent at $w$. In $\Qalt{2k}|_{(\nplus)^C}$, the only cyclic
subquiver containing $n^-$ is $\Qalt{2k}|_{\{e,w,n^-\}}$, whose descent is
$w$. The only vortices in this full subquiver containing $n^-$ have the form
$\Qalt{2k}|_{\{e,w,n^-,d\}}$, with apex $d$. These observations and \cref{lem:basic-f1-f3} show that
$(\Qalt{2k}|_{(\nplus)^C},n^-)$, $(\Qalt{2k}|_{\{e\}^C},n^-)$, and
$(\Qalt{2k}|_{D^C},n^-)$ satisfy F1--F3. Hence
\cref{lem:mutation-to-r} gives
$\mu_{n^-}(\Qalt{2k})\in\Rclass_1$ with return $n^-$.

\textup{(c)} For $n^+\in\nplus$, every cyclic subquiver containing
$n^+$ has descent $e$ or $w$. The only
vortices containing $n^+$ have the
form $\Qalt{2k}|_{\{e,d,n^-,n^+\}}$, with apex $n^-$. Hence
$(\Qalt{2k},n^+)$ satisfies F1--F3.

\textup{(iv)} $j=2k+1$.

\textup{(a)} For $d\in D$, the cyclic subquivers containing $d$
have the forms $\Qalt{2k+1}|_{\{w,d,n^+\}}$ and
$\Qalt{2k+1}|_{\{e,d,n^-\}}$, with descents $w$ and $e$, respectively. The
only vortices containing $d$ have the forms
$\Qalt{2k+1}|_{\{w,d,n^-,n^+\}}$ and
$\Qalt{2k+1}|_{\{e,d,n^-,n^+\}}$. A vortex of the first form has apex
$n^-$; a vortex of the second form has apex $n^+$. Hence
$(\Qalt{2k+1},d)$ satisfies F1--F3.

\textup{(b)} For $n^-\in\nminus$, the full subquiver
$\Qalt{2k+1}|_{\nplus\sqcup\{w\}\sqcup D}$ is a $1$-nucleus with global
descent at $w$. The full subquivers
$\Qalt{2k+1}|_{(\nplus)^C}$ and $\Qalt{2k+1}|_{D^C}$ are each a nucleus with
global descent at $e$. In $\Qalt{2k+1}|_{\{w\}^C}$, the only cyclic subquivers
containing $n^-$ have the form $\Qalt{2k+1}|_{\{e,d,n^-\}}$, with descent
$e$. The only vortices in this full subquiver containing $n^-$ have the form
$\Qalt{2k+1}|_{\{e,d,n^-,n^+\}}$, with apex $n^+$. These observations and \cref{lem:basic-f1-f3} show that
$(\Qalt{2k+1}|_{(\nplus)^C},n^-)$,
$(\Qalt{2k+1}|_{\{w\}^C},n^-)$, and
$(\Qalt{2k+1}|_{D^C},n^-)$ satisfy F1--F3. Hence
\cref{lem:mutation-to-r} gives
$\mu_{n^-}(\Qalt{2k+1})\in\Rclass_1$ with return $n^-$.

\textup{(c)} For $n^+\in\nplus$, the full subquiver
$\Qalt{2k+1}|_{D\sqcup\{e\}\sqcup\nminus}$ is a $1$-nucleus with global
descent at $e$. The full subquiver $\Qalt{2k+1}|_{D^C}$ is a nucleus with
global descent at $e$. In
$\Qalt{2k+1}|_{\{e\}^C}$ and $\Qalt{2k+1}|_{(\nminus)^C}$, every cyclic
subquiver containing $n^+$ has one of the forms
$\Qalt{2k+1}|_{\{w,d,n^+\}}$ and $\Qalt{2k+1}|_{\{e,w,n^+\}}$, with
descents $w$ and $e$, respectively.
The only vortices among these two full subquivers that contain $n^+$ have
the form $\Qalt{2k+1}|_{\{w,d,n^-,n^+\}}$, with apex $n^-$. These observations and \cref{lem:basic-f1-f3} show that
$(\Qalt{2k+1}|_{\{e\}^C},n^+)$, $(\Qalt{2k+1}|_{D^C},n^+)$, and
$(\Qalt{2k+1}|_{(\nminus)^C},n^+)$ satisfy F1--F3. Hence
\cref{lem:mutation-to-r} gives
$\mu_{n^+}(\Qalt{2k+1})\in\Rclass_1$ with return $n^+$.

\Needspace{8\baselineskip}
\textup{(v)} $j=2k+2$.

\textup{(a)} For $d\in D$, the quiver
$\Qalt{2}|_{\{e,w\}\sqcup\nplus}
=\mu_{(w,e)^k}(\Qalt{2k+2})|_{\{e,w\}\sqcup\nplus}$
is abundant and acyclic. Hence
$\Qalt{2k+2}|_{\{e,w\}\sqcup\nplus}$ is a $2k$-nucleus with global descent
at $w$. The full subquiver
$\Qalt{2k+2}|_{\{w\}^C}$ is abundant and acyclic, with $d$ neither a source
nor a sink, and $\Qalt{2k+2}|_{(\nplus)^C}$ is a nucleus with global descent
at $w$. In $\Qalt{2k+2}|_{\{e\}^C}$, the only cyclic subquivers containing
$d$ have the form $\Qalt{2k+2}|_{\{w,d,n^-\}}$, with descent $w$. The only
vortices in this full subquiver containing $d$ have the form
$\Qalt{2k+2}|_{\{w,d,n^-,n^+\}}$, with apex $n^+$. These observations and \cref{lem:basic-f1-f3} show that
$(\Qalt{2k+2}|_{\{w\}^C},d)$, $(\Qalt{2k+2}|_{\{e\}^C},d)$, and
$(\Qalt{2k+2}|_{(\nplus)^C},d)$ satisfy F1--F3. Hence
\cref{lem:mutation-to-r} gives
$\mu_d(\Qalt{2k+2})\in\Rclass_{2k}$ with return $d$.

\begingroup\emergencystretch=1em
\textup{(b)} For $n^-\in\nminus$, the cyclic subquivers containing
$n^-$ have the forms $\Qalt{2k+2}|_{\{e,w,n^-\}}$ and
$\Qalt{2k+2}|_{\{w,d,n^-\}}$, both with descent $w$. The only vortices
containing $n^-$ have the
form $\Qalt{2k+2}|_{\{w,d,n^-,n^+\}}$, with apex $n^+$. Hence
$(\Qalt{2k+2},n^-)$ satisfies F1--F3.
\par\endgroup

\textup{(c)} For $n^+\in\nplus$, the full subquiver
$\Qalt{2k+2}|_{D\sqcup\{w\}\sqcup\nminus}$ is a $1$-nucleus with global
descent at $w$. The full subquiver $\Qalt{2k+2}|_{\{w\}^C}$ is abundant and
acyclic, with $n^+$ neither a source nor a sink, and
$\Qalt{2k+2}|_{D^C}$ is a nucleus with global descent at $w$. In
$\Qalt{2k+2}|_{(\nminus)^C}$, the only cyclic subquivers containing $n^+$
have the form $\Qalt{2k+2}|_{\{e,w,n^+\}}$, with descent $w$. The only
vortices in this full subquiver containing $n^+$ have the form
$\Qalt{2k+2}|_{\{e,w,d,n^+\}}$, with apex $d$. These observations and \cref{lem:basic-f1-f3} show that
$(\Qalt{2k+2}|_{\{w\}^C},n^+)$, $(\Qalt{2k+2}|_{D^C},n^+)$, and
$(\Qalt{2k+2}|_{(\nminus)^C},n^+)$ satisfy F1--F3. Hence
\cref{lem:mutation-to-r} gives
$\mu_{n^+}(\Qalt{2k+2})\in\Rclass_1$ with return $n^+$.

\textup{(d)} Every cyclic subquiver containing $e$ has descent $w$;
its vertex set is $\{e,w,n^-\}$ or $\{e,w,n^+\}$. The only
vortices containing $e$ have the form $\Qalt{2k+2}|_{\{e,w,d,n^+\}}$, with
apex $d$. Hence $(\Qalt{2k+2},e)$ satisfies F1--F3.
\end{proof}

\begin{proof}[Proof of \cref{thm:alternating-mutations}]
The abundance assertion follows from
\cref{lem:alternating-ew-edges,lem:alternating-d-nminus,lem:alternating-d-nplus,lem:alternating-nminus-nplus}.
The assertion in \textup{(ii)} follows from \cref{lem:alternating-interior},
while \textup{(iii)--(v)} follow from \cref{lem:alternating-boundary}.

It remains to verify \textup{(i)}. For every $v\in D\sqcup\{w\}$, the pair
$(\Qalt{0},v)$ satisfies F1--F3 by \cref{prop:basic-class-mutations}, proving
\textup{(i)(a)}.

To prove \textup{(i)(b)}, let $v\in\nminus\sqcup\nplus$ satisfy the
restrictions in the statement.

\emph{Case 1:} $\Qalt{0}\in\Rclass_{2k}$.
The quivers $\Qalt{0}|_{\{e\}^C}$, $\Qalt{0}|_{D^C}$, and
$\Qalt{0}|_{U^C}$ are each a nucleus with global descent at $r\ne v$.

\emph{Case 2:} $\Qalt{0}\in\SSclass_{2k}$.
The quivers $\Qalt{0}|_{\{e\}^C}$, $\Qalt{0}|_{D^C}$, and
$\Qalt{0}|_{U^C}$ are acyclic, and \cref{prop:npm-decomposition} together
with the restriction on $v$ shows that $v$ is neither a source nor a sink
in any of them.

Thus
\cref{lem:basic-f1-f3} shows that $(\Qalt{0}|_{\{e\}^C},v)$,
$(\Qalt{0}|_{D^C},v)$, and $(\Qalt{0}|_{U^C},v)$ satisfy F1--F3 in either
case, and \cref{lem:mutation-to-r} gives
$\mu_v(\Qalt{0})\in\Rclass_{2k}$ with return $v$.

Finally, both $\nminus$ and $\nplus$ are nonempty by
\cref{lem:npm-occupancy}. The
orientations in \cref{fig:alternating-orientations} show that every vertex has
both an incoming and an outgoing arrow for $1\le j\le2k+2$. Hence $\Qalt{j}$
has neither a source nor a sink for $1\le j\le2k+2$.
\end{proof}

We next consider reduced mutation sequences starting in
$\Rclass_{2k}\cup\SSclass_{2k}$.

\begin{theorem}\label{thm:r2k-ss-sequences}
Let $Q\in\Rclass_{2k}\cup\SSclass_{2k}$, and let
$\mathbf v=(v_1,\ldots,v_\ell)$ be a reduced mutation sequence with
$\ell\ge1$. If $Q\in\Rclass_{2k}$ has return $r$, assume $v_1\ne r$; if
$Q\in\SSclass_{2k}$ has a source $s_1$ and a sink $s_2$, assume
$v_1\notin\{s_1,s_2\}$. Then $\mu_{\mathbf v}(Q)\notin\Sclass\cup\SSclass$.
\end{theorem}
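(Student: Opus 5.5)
We argue by induction on $\ell$, simultaneously for all quivers in $\Rclass_1\cup\Rclass_{2k}\cup\SSclass_{2k}$ and all admissible sequences, using \cref{thm:r1-sequences} as the companion statement for $\Rclass_1$. Global reversal (\cref{rem:global-reversal}) lets us assume $U=\{w\}$, so \cref{thm:alternating-mutations} applies to $\Qalt{0}=Q$. The strategy is to follow $\mathbf v$ along the alternating sequence \eqref{eq:alternating-sequence} for as long as it agrees with it. At the first deviation, one of two things happens. Either we reach a pair satisfying F1--F3, and then \cref{lem:mutation-equivalent-nucleus} shows that every later quiver is a nucleus, so it has no source or sink and is not in $\Sclass\cup\SSclass$. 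Or we land in $\Rclass_1$ or $\Rclass_{2k}$ with return equal to the vertex just mutated, and reducedness forces the next vertex to differ from that return, so the induction hypothesis (or \cref{thm:r1-sequences}) finishes.

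The case analysis on $v_1$ runs as follows. If $v_1\in D\sqcup\{w\}$, we use \cref{thm:alternating-mutations}(i)(a). If $v_1\in\nminus\sqcup\nplus$, then by hypothesis $v_1\ne r$, or $v_1$ is neither the source nor the sink; in that case \cref{thm:alternating-mutations}(i)(b) gives $\mu_{v_1}(Q)\in\Rclass_{2k}$ with return $v_1$, and we apply induction to $(v_2,\dots,v_\ell)$. The case $v_1=e$ remains. Let $m$ be maximal such that $(v_1,\dots,v_m)=(i_1,\dots,i_m)$. Every $\Qalt{j}$ with $1\le j\le 2k+2$ has no source or sink by \cref{thm:alternating-mutations}, so if $\ell\le m$ and $m\le 2k+2$ we are done. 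If $m<\ell$ and $m\le 2k+2$, let $v=v_{m+1}$. By reducedness $v\ne i_m$, and by maximality $v\ne i_{m+1}$, so $v\notin\{e,w\}$ (except possibly at $m=2k+2$, where $v=e$ is allowed). For $1\le m<2k$ we use (ii). For $m=2k$ we use (iii)(a)--(c). For $m=2k+1$ we use (iv)(a)--(c). For $m=2k+2$ we use (v)(a)--(d); here $v=w$ is excluded by reducedness, and $v=e$ is covered by (v)(d). Each alternative is either an F1--F3 exit, which ends the argument via \cref{lem:mutation-equivalent-nucleus}, or an $\Rclass$ status with return $v$, to which we apply induction on $(v_{m+2},\dots,v_\ell)$ with $v_{m+2}\ne v$.

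The main obstacle is an alternating prefix longer than $2k+2$, that is, $m>2k+2$ with the sequence continuing along $e,w,e,\dots$. \cref{thm:alternating-mutations} stops at $j=2k+2$, but (v)(d) says exactly that $(\Qalt{2k+2},e)$ satisfies F1--F3. So if $v_{2k+3}=e$, then \cref{lem:mutation-equivalent-nucleus} applies to the entire remaining sequence starting at $\Qalt{2k+2}$, and every subsequent quiver is a nucleus. This closes the case and keeps the induction well-founded, because each recursive call is on a strictly shorter sequence.

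One further point needs checking: the induction hypothesis must be applicable in every recursive call. In the $\Rclass_{2k}$ outputs, the new nucleus has alternating vertex $w$ and a different decomposition, as in (iii)(a) and (v)(a), and possibly the orientation $D=\{w\}$; global reversal handles that orientation. In the $\Rclass_1$ outputs, \cref{thm:r1-sequences} needs only $v_{m+2}\ne$ return, which reducedness supplies.
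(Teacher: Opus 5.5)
Your proof is correct and follows the paper's argument essentially step for step: the same case split on $v_1$, the same maximal alternating prefix (with (v)(d) handling a continuation at $e$ past step $2k+2$), and the same hand-off either to \cref{lem:mutation-equivalent-nucleus}, or to \cref{thm:r1-sequences}, or to the induction hypothesis on a shorter sequence. One harmless slip: in (iii)(a) and (v)(a) the new $2k$-nucleus has global descent $w$ and alternating vertex $e$ (its $D$-part is $\{e\}$), not alternating vertex $w$; your conclusion that global reversal handles this orientation is still right.
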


\begin{proof}
We use strong induction on $\ell$, simultaneously for all
$Q\in\Rclass_{2k}\cup\SSclass_{2k}$. Set $\Qalt{0}=Q$ and, without loss of
generality, assume $U=\{w\}$. Retain the notation of
\cref{thm:alternating-mutations}.

\emph{Case 1:} $v_1\in D\cup\{w\}$.
The pair
$(Q,v_1)$ satisfies F1--F3 by
\cref{thm:alternating-mutations}\textup{(i)(a)}. By
\cref{lem:mutation-equivalent-nucleus}, $\mu_{\mathbf v}(Q)$ is a nucleus
and hence has neither a source nor a sink.

\emph{Case 2:} $v_1\in\nminus\cup\nplus$.
\Cref{thm:alternating-mutations}\textup{(i)(b)} gives
$\mu_{v_1}(Q)\in\Rclass_{2k}$ with return $v_1$. For $\ell=1$, the conclusion
follows. For $\ell>1$, reducedness gives $v_2\ne v_1$, and the induction
hypothesis applies to $(v_2,\ldots,v_\ell)$ with initial quiver
$\mu_{v_1}(Q)$.

Now let $v_1=e$, and let $j$ be maximal such that
$v_i\in\{e,w\}$ for $1\le i\le j$. By reducedness, these mutations alternate
between $e$ and $w$.

For $j>2k+2$, after the first $2k+2$ mutations the next mutation is at $e$.
By \cref{thm:alternating-mutations}\textup{(v)(d)},
$(\Qalt{2k+2},e)$ satisfies F1--F3. By
\cref{lem:mutation-equivalent-nucleus}, $\mu_{\mathbf v}(Q)$ is a nucleus
and hence has neither a source nor a sink. It remains to consider $j\le2k+2$.

\emph{Case 3:} $j=\ell\le2k+2$.
The quiver $\Qalt{j}$ has neither a source nor a sink by
\cref{thm:alternating-mutations}, which proves the conclusion.

\emph{Case 4:} $j<\ell$ and $j\le2k+2$.
The maximality of $j$ gives $v_{j+1}\notin\{e,w\}$. By
\cref{thm:alternating-mutations}\textup{(ii)--(iv) and (v)(a)--(c)}, either
$(\Qalt{j},v_{j+1})$ satisfies F1--F3, or
$\mu_{v_{j+1}}(\Qalt{j})$ belongs to $\Rclass_1$ or $\Rclass_{2k}$ with
return $v_{j+1}$.

If $(\Qalt{j},v_{j+1})$ satisfies F1--F3, then
\cref{lem:mutation-equivalent-nucleus} shows that $\mu_{\mathbf v}(Q)$ is a
nucleus and hence has neither a source nor a sink.

Suppose instead that
$\mu_{v_{j+1}}(\Qalt{j})\in\Rclass_1\cup\Rclass_{2k}$ with
return $v_{j+1}$. If $j+1=\ell$, then
$\mu_{\mathbf v}(Q)\notin\Sclass\cup\SSclass$. If $j+1<\ell$, reducedness
gives $v_{j+2}\ne v_{j+1}$; apply \cref{thm:r1-sequences} when
$\mu_{v_{j+1}}(\Qalt{j})\in\Rclass_1$ and the induction hypothesis when
$\mu_{v_{j+1}}(\Qalt{j})\in\Rclass_{2k}$.
\end{proof}

We finish with the $\Sclass_{2k}$-case.

\begin{theorem}\label{thm:s-sequences}
Let $Q\in\Sclass_{2k}$ with source or sink $s$, and let $e$ be
the global descent of its closed nucleus. Let
$\mathbf v=(v_1,\ldots,v_\ell)$ be a reduced mutation sequence with
$\ell\ge1$ and $v_1\notin\{e,s\}$. Then
$\mu_{\mathbf v}(Q)\notin\Sclass\cup\SSclass$.
\end{theorem}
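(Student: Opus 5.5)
The proof splits on the first mutation $v_1$ and reduces to \cref{thm:r2k-ss-sequences} or \cref{thm:r1-sequences}. By \cref{rem:global-reversal} we may assume $Q$ has a source $s$ and no sink. Then \cref{lem:npm-occupancy}(ii) gives $\nplus=\varnothing$. By \cref{prop:npm-decomposition},
\[
V(Q)=N\sqcup\nminus, \qquad s\in\nminus.
\]
As in \cref{thm:alternating-mutations}, we may also assume $U=\{w\}$ by reversal, or treat $D=\{w\}$ symmetrically. Since $v_1\ne e$, the two cases are $v_1\in D\cup U$ and $v_1\in\nminus\setminus\{s\}$.

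\emph{Case 1: $v_1\in D\cup U$.} \Cref{prop:basic-class-mutations} shows that $(Q,v_1)$ satisfies F1--F3. By \cref{lem:mutation-equivalent-nucleus}, $\mu_{\mathbf v}(Q)$ is a nucleus. By \cref{rem:nucleus-no-source-sink} it has neither a source nor a sink, so it is not in $\Sclass\cup\SSclass$.

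\emph{Case 2: $v_1\in\nminus\setminus\{s\}$.} I would show that $\mu_{v_1}(Q)\in\Rclass_{2k}$ with return $v_1$, exactly as in the proof of \cref{thm:alternating-mutations}(i)(b), Case 2.
\begin{itemize}
\item[(1)] In $Q\in\Sclass$, the subquivers $Q|_{\{e\}^C}$, $Q|_{D^C}$ and $Q|_{U^C}$ are acyclic.
\item[(2)] They are abundant because $Q$ is.
\item[(3)] \Cref{lem:basic-f1-f3} then needs $v_1$ to be neither a source nor a sink in each of the three subquivers.
\item[(4)] It is not a sink: $v_1\to N$, and each of the three subquivers contains some vertex of $N$.
\item[(5)] For non-sourceness, note that $Q|_{\nminus}$ is acyclic (\cref{prop:npm-acyclicity}) and abundant, so it has a unique source. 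Since $\nminus\to N$, that source is a source of $Q$, hence equals $s$.
\item[(6)] Therefore $v_1\ne s$ has an in-neighbour in $\nminus$, and this in-neighbour lies in all three subquivers.
\end{itemize}
Then \cref{lem:mutation-to-r} gives $\mu_{v_1}(Q)\in\Rclass$ with closed nucleus $Q|_N$ and return $v_1$. Because the closed nucleus is unchanged and is a $2k$-nucleus, $\mu_{v_1}(Q)\in\Rclass_{2k}$.

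If $\ell=1$ we are done, since $\Rclass$ is disjoint from $\Sclass\cup\SSclass$. The three subquivers are nuclei, so they have no source or sink; they cannot also be acyclic. If $\ell>1$, reducedness gives $v_2\ne v_1=r$. \Cref{thm:r2k-ss-sequences} applied to $(v_2,\ldots,v_\ell)$ with initial quiver $\mu_{v_1}(Q)$ then gives the conclusion.

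\textbf{Main obstacle.} The delicate point is the non-source check in step (5): we must confirm that the unique source $s$ of $Q$ is the only vertex of $\nminus$ with no in-neighbour in $\nminus$. This needs both the acyclicity of $Q|_{\nminus}$ and its abundance, so that its source is unique. I would also verify that \cref{lem:mutation-to-r} applies without extra hypotheses. Its requirement $v\to N$ holds here because $v_1\in\nminus$.
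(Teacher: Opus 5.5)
Your proof is correct and follows the paper's argument. Case 1 uses F1--F3 at $v_1\in D\cup U$. Case 2 uses \cref{lem:mutation-to-r} to get $\mu_{v_1}(Q)\in\Rclass_{2k}$ with return $v_1$, and then \cref{thm:r2k-ss-sequences} handles the remaining mutations. Your steps (5)--(6), based on $\nplus=\varnothing$ and the unique source of the abundant acyclic quiver $Q|_{\nminus}$, just spell out the non-source check that the paper compresses into a citation of \cref{prop:npm-decomposition}; the normalization $U=\{w\}$ is never used and can be dropped.
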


\begin{proof}\leavevmode\par
\emph{Case 1:} $v_1\in D\cup U$.
Then $(Q,v_1)$ satisfies F1--F3 by
\cref{prop:basic-class-mutations}, so $\mu_{\mathbf v}(Q)$ has neither a source
nor a sink; hence the conclusion follows.

\emph{Case 2:} $v_1\in\nminus\cup\nplus$.
The quivers $Q|_{\{e\}^C}$, $Q|_{D^C}$, and
$Q|_{U^C}$ are acyclic, and, by \cref{prop:npm-decomposition} together with
$v_1\ne s$, $v_1$ is neither a source nor a sink in any of them. Thus
$\bigl(Q|_{\{e\}^C},v_1\bigr)$, $\bigl(Q|_{D^C},v_1\bigr)$, and
$\bigl(Q|_{U^C},v_1\bigr)$ satisfy F1--F3 by
\cref{lem:basic-f1-f3}, and
\cref{lem:mutation-to-r} gives
$\mu_{v_1}(Q)\in\Rclass_{2k}$ with return $v_1$.
If $\ell=1$, the conclusion follows. If $\ell>1$, reducedness gives
$v_2\ne v_1$, so \cref{thm:r2k-ss-sequences}, applied to
$(v_2,\ldots,v_\ell)$ with initial quiver $\mu_{v_1}(Q)$, gives the conclusion.
\end{proof}

The exit assertions now follow.

\begin{corollary}\label{cor:status-exits}
\begin{enumerate}[label=\textup{(\roman*)},leftmargin=2.4em,itemsep=4pt]
\item Let $Q\in\SSclass_{2k}$ with source $s_1$ and sink $s_2$.
Every $v\notin\{s_1,s_2\}$ is an exit in $Q$.
\item Let $Q\in\Sclass_{2k}$ with source or sink $s$, and let $e$ be the
global descent of its closed nucleus.
Every $v\notin\{e,s\}$ is an exit in $Q$.
\end{enumerate}
\end{corollary}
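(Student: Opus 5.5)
The corollary follows from \cref{thm:r2k-ss-sequences,thm:s-sequences} by a contrapositive argument on cycles. The key observation is that neither $\Sclass$ nor $\SSclass$ can be reached again. Suppose $v$ is not an exit in $Q$. By \cref{def:exit}, there is then a reduced mutation sequence $\mathbf j=(j_1,\ldots,j_\ell)$ with $\ell\ge1$, $j_1=v$, and $\mu_{\mathbf j}(Q)=Q$. Since $Q$ itself belongs to $\SSclass_{2k}$, respectively $\Sclass_{2k}$, we get $\mu_{\mathbf j}(Q)\in\Sclass\cup\SSclass$.

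For (i), let $Q\in\SSclass_{2k}$ and $v\notin\{s_1,s_2\}$. The sequence $\mathbf j$ is reduced, has $\ell\ge1$, and satisfies $j_1=v\notin\{s_1,s_2\}$. These are exactly the hypotheses of \cref{thm:r2k-ss-sequences} in the $\SSclass_{2k}$-case, so $\mu_{\mathbf j}(Q)\notin\Sclass\cup\SSclass$. This contradicts $\mu_{\mathbf j}(Q)=Q\in\SSclass$.

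For (ii), let $Q\in\Sclass_{2k}$ with source or sink $s$, and let $v\notin\{e,s\}$. Here \cref{thm:s-sequences} applies directly to $\mathbf j$ and gives $\mu_{\mathbf j}(Q)\notin\Sclass\cup\SSclass$. This contradicts $\mu_{\mathbf j}(Q)=Q\in\Sclass$.

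One point needs checking: the definition of exit quantifies over all reduced sequences starting at $v$, and the cited theorems cover exactly such sequences. Reducedness is only required between consecutive entries, with no cyclic condition on $j_\ell$ versus $j_1$, and this matches the hypotheses of the theorems. The substantive work was done in the preceding theorems, so no real obstacle remains.
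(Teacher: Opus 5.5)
Your proposal is correct and follows the paper's proof: both apply \cref{thm:r2k-ss-sequences,thm:s-sequences} to an arbitrary reduced sequence starting at the indicated vertex, observe that it ends outside $\Sclass\cup\SSclass$ and so cannot return to $Q$, and conclude by \cref{def:exit}. Your contrapositive phrasing and your check that the reducedness condition matches the theorems' hypotheses spell out the same argument more explicitly.
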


\begin{proof}
By \cref{thm:r2k-ss-sequences,thm:s-sequences}, any reduced mutation
sequence beginning at one of the indicated vertices ends outside
$\Sclass\cup\SSclass$ and hence cannot return to $Q$.
The conclusion follows from \cref{def:exit}.
\end{proof}

\Needspace{14\baselineskip}
\section{Nucleus-extension quivers and their unique cycles}\label{sec:cycle}

Let us show that every mutation away from the cycle is performed at an exit.

\begin{theorem}\label{thm:s-alternating-mutations}
Let $\Xj{0}\in\Sclass_{2k}$ with closed nucleus decomposition
$N=D\sqcup\{e\}\sqcup\{w\}$, where $w$ is the alternating vertex. Set $\Xj{j}=\mu_{(i_1,\ldots,i_j)}(\Xj{0})$ for
$1\le j\le2k+2$, with $i_j$ as in \eqref{eq:alternating-sequence}.
Here $\nminus$ and $\nplus$ are taken with respect to $\Xj{0}$.
\begin{enumerate}[label=\textup{(\roman*)},leftmargin=2.4em,itemsep=4pt]
\item If $\Xj{0}$ has a source, then $\Xj{2k}\in\Sclass_{2k}$ with alternating
vertex $e$ and closed nucleus decomposition \mbox{$\{e\}\sqcup\{w\}\sqcup\nminus$}.
For $1\le j<2k$, every $v\notin\{e,w\}$ is an exit in $\Xj{j}$.
\item If $\Xj{0}$ has a sink, then $\Xj{2k+2}\in\Sclass_{2k}$ with alternating
vertex $e$ and closed nucleus decomposition \mbox{$\{e\}\sqcup\{w\}\sqcup\nplus$}.
For $1\le j<2k+2$, every $v\notin\{e,w\}$ is an exit in $\Xj{j}$.
\end{enumerate}
\end{theorem}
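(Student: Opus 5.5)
By global reversal (\cref{rem:global-reversal}), (ii) should follow from (i) applied to $(\Xj{0})^{\op}$, once we check how $\op$ interacts with the orientation $U=\{w\}$ versus $D=\{w\}$. Since the alternating pattern differs in the two cases (ending at $2k$ versus $2k+2$), I will nevertheless treat both directly. The common tool is the orientation bookkeeping of \cref{lem:alternating-ew-edges,lem:alternating-d-nminus,lem:alternating-d-nplus,lem:alternating-nminus-nplus}. Those lemmas only use that $Q|_N$ is a $2k$-nucleus, that $\nminus\to N\to\nplus$ (\cref{prop:npm-decomposition}), and that $Q|_{\nminus}$, $Q|_{\nplus}$ are acyclic (\cref{prop:npm-acyclicity}). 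All three facts hold for $\Sclass_{2k}$, with one of $\nminus,\nplus$ empty by \cref{lem:npm-occupancy}. So the orientations in \cref{fig:alternating-orientations} remain valid after deleting the empty set.

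\textbf{Case (i), source.} Here $\nplus=\varnothing$, so $V=D\sqcup\{e,w\}\sqcup\nminus$. In $\Xj{2k}$ the nucleus part $\mu_{(e,w)^k}(Q|_N)$ is acyclic with $e\to w\to D$ and $e\to D$. Moreover $\{e,w\}\sqcup\nminus\to D$ by \cref{lem:alternating-d-nminus}, and $\Xj{2k}|_{\{e,w\}\sqcup\nminus}=\mu_{(w,e)^k}$ applied to the acyclic quiver $\Xj{0}|_{\{e,w\}\sqcup\nminus}$. That last quiver has $\nminus\to e\to w$ and $\nminus\to w$, which is exactly the orientation required in \cref{def:special-nucleus-types}(ii) with $e$ as alternating vertex and $\nminus$ as the "$D$"-part. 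Hence $N'=\{e\}\sqcup\{w\}\sqcup\nminus$ is a $2k$-nucleus with global descent $w$.

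To show $\Xj{2k}\in\Sclass_{2k}$ with closed nucleus $N'$, I will verify three things:
\begin{itemize}
\item the three deletions $\Xj{2k}|_{\{w\}^C}$, $\Xj{2k}|_{\{e\}^C}$ and $\Xj{2k}|_{(\nminus)^C}$ are acyclic, read off from the $k>1$ and $k=1$ pictures with $\nplus$ removed;
\item every vertex of $D$ receives arrows from all of $N'$, so the cyclic subquivers are exactly those inside $N'$, making $N'$ closed;
\item $\Xj{2k}$ has a sink (the sink of the acyclic quiver $\Xj{2k}|_D$, using $N'\to D$) and no source.
\end{itemize}
For the first point with $k=1$, I need acyclicity of $\Xj{2}|_{\{e\}^C}$; this holds because the only cycles involve $e$. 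For the third point, "no source" holds because every vertex of $N'$ lies in a cycle. For the exit claim with $1\le j<2k$ and $v\notin\{e,w\}$, \cref{lem:alternating-interior} applies verbatim (its proof never uses nonemptiness of $\nplus$), giving F1--F3, and \cref{lem:f1-f3-exit} finishes.

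\textbf{Case (ii), sink.} Here $\nminus=\varnothing$ and $V=D\sqcup\{e,w\}\sqcup\nplus$. I follow the sequence to $\Xj{2k+2}$. By the proof of \cref{lem:alternating-ew-edges}, $\Xj{2}|_{\{e,w\}\sqcup\nplus}$ is acyclic with elbow $e$. Therefore $\Xj{2k+2}|_{\{e,w\}\sqcup\nplus}=\mu_{(e,w)^k}(\Xj{2})|_{\ldots}$ is a $2k$-nucleus with alternating vertex $e$ and global descent $w$, exactly as in the proof of (v)(a). On $\{e,w\}\sqcup D$ the last two steps only reverse arrows, making $D\to\{e,w\}$. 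By \cref{lem:alternating-d-nplus} we also have $D\to\nplus$, so $D$ points into $N''=\{e,w\}\sqcup\nplus$. From this, the source of $\Xj{2k+2}|_D$ is a source of $\Xj{2k+2}$, and closedness and acyclicity of the three deletions follow as in case (i).

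For the exit claims, $1\le j<2k$ is again \cref{lem:alternating-interior}. The cases $j=2k$ and $j=2k+1$ need the analogues of (iii)(c) and (iv)(a),(c) with $\nminus$ empty. I expect this to be the main obstacle. In \cref{thm:alternating-mutations} some vertices at these steps mutate into $\Rclass_1$ rather than satisfying F1--F3, and those outcomes relied on both $\nminus$ and $\nplus$ being nonempty. With $\nminus=\varnothing$ I will check F1--F3 directly, as follows.
\begin{itemize}
\item At $j=2k$, for $v\in\nplus$: this is (iii)(c). Its only listed vortex used $n^-$, so with $\nminus$ empty no vortex has apex $v$.
\item At $j=2k$, for $d\in D$: the cyclic subquivers through $d$ are $\{e,d,n^+\}$ (descent $e$) and possibly $\{e,w,d\}$-type triangles. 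I will verify that each has descent $e$ or $w$ and that no vortex containing $d$ survives once $n^-$ is removed.
\item At $j=2k+1$: the same check for $D$ and $\nplus$, using the descents computed at the end of \cref{lem:alternating-d-nplus}.
\end{itemize}
If some vertex there fails F1--F3, the fallback is to show that its mutation lands in $\Rclass$ with return $v$ via \cref{lem:mutation-to-r}, and then invoke \cref{thm:r1-sequences} or \cref{thm:r2k-ss-sequences} to conclude it is an exit.
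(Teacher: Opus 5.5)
Your proposal is correct and follows essentially the paper's own proof. It uses the same identification of $\{e\}\sqcup\{w\}\sqcup\nminus$ (resp.\ $\{e\}\sqcup\{w\}\sqcup\nplus$) as a $2k$-nucleus by undoing the alternating mutations, closedness from $\{e,w\}\sqcup\nminus\to D$ (resp.\ $D\to\{e,w\}\sqcup\nplus$), and the sink (resp.\ source) of $\Xj{0}|_D$. It also verifies F1--F3 for $v\notin\{e,w\}$ by noting that the only $4$-vertex subquivers needing inspection are $\{e,w,d,n^{\mp}\}$, each containing two cyclic subquivers; the paper does this check uniformly for all $1\le j<2k+2$ in the sink case, which is exactly what your direct verification at $j=2k,2k+1$ yields.

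There are two minor slips, neither affecting the argument. First, in the new decomposition $\{e\}$ is the $D$-part and $\nminus$ (resp.\ $\nplus$) is the $U$-part, not the other way round. Second, your fallback through $\Rclass$ would not by itself give exits, since $\Xj{j}\notin\Sclass\cup\SSclass$ for these $j$. It is never needed, because F1--F3 do hold there.
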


\begin{proof}
By \cref{lem:npm-occupancy}, $\nplus=\varnothing$ in the source case and
$\nminus=\varnothing$ in the sink case. The computations in
\cref{lem:alternating-ew-edges,lem:alternating-d-nminus,lem:alternating-d-nplus}
apply with the orientations obtained from \cref{fig:alternating-orientations}
by omitting $\nplus$ and $\nminus$, respectively.

We first prove the exit assertions. Consider $1\le j<2k$ in the source case
and $1\le j<2k+2$ in the sink case. The preceding computations show that
$\Xj{j}$ is abundant and every cyclic subquiver of $\Xj{j}$ has descent
$e$ or $w$. As in the proof of \cref{lem:alternating-interior}, to verify
vortex-freeness it suffices to consider
\[
\begin{aligned}
 &\Xj{j}|_{\{e,w,d,n^-\}},\quad d\in D,\ n^-\in\nminus,
 &&\text{in the source case},\\
 &\Xj{j}|_{\{e,w,d,n^+\}},\quad d\in D,\ n^+\in\nplus,
 &&\text{in the sink case}.
\end{aligned}
\]
In the indicated ranges, each of these full subquivers contains at least two
cyclic subquivers. Thus none is a vortex, and every $v\notin\{e,w\}$ belongs
to a cyclic subquiver and is neither a source nor a sink. Hence
$(\Xj{j},v)$ satisfies F1--F3 for every $v\notin\{e,w\}$, so $v$ is an exit
by \cref{lem:f1-f3-exit}.

\begingroup\emergencystretch=1em
\textup{(i)} Suppose that $\Xj{0}$ has a source. By
\cref{lem:alternating-ew-edges,lem:alternating-d-nminus}, the quiver
$\Xj{0}|_{\{e,w\}\sqcup\nminus}
=\mu_{(w,e)^k}(\Xj{2k})|_{\{e,w\}\sqcup\nminus}$
is abundant and acyclic with the orientation required in
\cref{def:special-nucleus-types}\textup{(ii)}, so
$\Xj{2k}|_{\{e,w\}\sqcup\nminus}$ is a $2k$-nucleus with global descent
at $w$ and alternating vertex $e$. Moreover,
$\{e,w\}\sqcup\nminus\to D$ in $\Xj{2k}$, so this nucleus is closed.
The full subquivers $\Xj{2k}|_{\{e\}^C}$, $\Xj{2k}|_{\{w\}^C}$, and
$\Xj{2k}|_{(\nminus)^C}$ are acyclic. The sink of $\Xj{0}|_D$ is a sink of $\Xj{2k}$,
which has no source. Hence $\Xj{2k}\in\Sclass_{2k}$.
\par\endgroup

\textup{(ii)} Suppose that $\Xj{0}$ has a sink. By
\cref{lem:alternating-ew-edges,lem:alternating-d-nplus}, the quiver
$\Xj{2}|_{\{e,w\}\sqcup\nplus}
=\mu_{(w,e)^k}(\Xj{2k+2})|_{\{e,w\}\sqcup\nplus}$
is abundant and acyclic with the orientation required in
\cref{def:special-nucleus-types}\textup{(ii)}, so
$\Xj{2k+2}|_{\{e,w\}\sqcup\nplus}$ is a $2k$-nucleus with global descent
at $w$ and alternating vertex $e$. Moreover,
$D\to\{e,w\}\sqcup\nplus$ in $\Xj{2k+2}$, so this nucleus is closed.
The full subquivers $\Xj{2k+2}|_{\{e\}^C}$, $\Xj{2k+2}|_{\{w\}^C}$, and
$\Xj{2k+2}|_{(\nplus)^C}$ are acyclic. The source of $\Xj{0}|_D$ is a source of
$\Xj{2k+2}$, which has no sink. Hence $\Xj{2k+2}\in\Sclass_{2k}$.
\end{proof}

We now apply \cref{cor:status-exits} to the following mutation cycles.

\begin{definition}\label{def:nucleus-extension-quiver}
Let $n,m,k$ be integers satisfying
\begin{equation}\label{eq:parameters}
 n\geq4,\qquad 3\leq m\leq n-1,\qquad k\geq1.
\end{equation}
Fix integers $q_{ij}\geq2$ for $1\leq i<j\leq n$, and let $R$ be the
acyclic quiver on $[m]$ defined by
\begin{equation}\label{eq:reference-R}
 b_{ij}(R)=q_{ij}\qquad(1\leq i<j\leq m).
\end{equation}
Let $T=[m+1,n]$. The \emph{nucleus-extension quiver} $\Xj{0}$ is the
quiver on $[n]$ defined by
\begin{equation}\label{eq:nucleus-extension-quiver}
 \Xj{0}|_{[m]}=\mu_{(2,1)^k}(R),\qquad
 b_{ij}(\Xj{0})=q_{ij}\quad(1\leq i<j\leq n,\ j\in T).
\end{equation}
Consider the mutation sequence
\[
 (v_1,\ldots,v_{n+4k})
 :=(n,n-1,\ldots,m+1)(1,2)^k(m,m-1,\ldots,1)(2,1)^k.
\]
By \cite[Theorem~4.24]{EN26}, this sequence, starting from $\Xj{0}$, is a
simple mutation cycle; let $\Gamma$ denote the corresponding cycle in the
labeled mutation graph. Write
\begin{equation}\label{eq:cycle-quivers}
 \Xj{j}:=\mu_{(v_1,\ldots,v_j)}(\Xj{0})
 \qquad(1\leq j\leq n+4k).
\end{equation}
\end{definition}

Since $\mu_{(1,2)^k}\bigl(\Xj{0}|_{[m]}\bigr)=R$,
\cref{def:special-nucleus-types}\textup{(ii)} shows that
$\Xj{0}|_{[m]}$ is a $2k$-nucleus with decomposition
$[3,m]\sqcup\{1\}\sqcup\{2\}$ and alternating vertex $2$.
The defining orientation of
$\Xj{0}$ shows that $\Xj{0}|_{[m]}$ is closed and that the full subquivers
$\Xj{0}|_{\{1\}^C}$, $\Xj{0}|_{[3,m]^C}$, and $\Xj{0}|_{\{2\}^C}$ are
acyclic. Consequently, $\Xj{0}\in\Sclass_{2k}$.

For $m=n-1$, this construction recovers the Fomin--Neville family
\cite[Theorem~1.1]{FN25}.

\Needspace{8\baselineskip}
\begin{theorem}\label{thm:main}
$\Gamma$ is the unique simple cycle in the labeled mutation graph of the
mutation class of $\Xj{0}$.
\end{theorem}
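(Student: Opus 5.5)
The plan is to show that every edge of the labeled mutation graph incident to a quiver of $\Gamma$, but not lying on $\Gamma$, is labeled by an exit. Then any simple cycle $\Gamma'$ that meets $\Gamma$ at some quiver $\Xj{j}$ must leave $\Xj{j}$ along an edge of $\Gamma$. Indeed, an edge at an exit lies on no mutation cycle, by \cref{def:exit}. Following $\Gamma'$ step by step, each of its edges is then an edge of $\Gamma$, and reducedness forces $\Gamma'$ to traverse $\Gamma$ in one of the two directions, so $\Gamma'=\Gamma$.

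It remains to handle cycles $\Gamma'$ that avoid $\Gamma$ entirely. Here I would argue that every quiver of the mutation class is joined to $\Gamma$ by a path. Take a shortest path from $\Gamma'$ to $\Gamma$ and consider its last edge, which ends at some $\Xj{j}$ with a label $i$ that is not on $\Gamma$. Reversed, this edge is a mutation at $\Xj{j}$ leaving $\Gamma$, so $i$ is an exit of $\Xj{j}$. Rather than the bare definition, I would use the stronger conclusion available from the proofs. Every reduced sequence starting at such an $i$ ends outside $\Sclass\cup\SSclass$: this comes either from \cref{thm:r2k-ss-sequences,thm:s-sequences,thm:r1-sequences}, or from \cref{lem:mutation-equivalent-nucleus}, which shows the result is a nucleus. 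A reduced walk that goes from $\Xj{j}$ out to $\Gamma'$, around $\Gamma'$, and back is not reduced at the turnaround, so this route does not work directly. Instead I would argue that $\Gamma'$ itself contains some vertex that is an exit, using the fact that quivers reached by such sequences again lie in a status where F1--F3 applies. If that fails, I would fall back on observing that each quiver on $\Gamma'$ is reached by a reduced sequence starting at an exit of $\Gamma$. For the precise formulation I would follow Fomin--Neville \cite{FN25}, whose conjecture concerns simple cycles through $\Gamma$'s quivers, with exits forming a tree-like complement.

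The main step is to check the status of every quiver $\Xj{j}$ on $\Gamma$ and to identify its exits.

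\begin{enumerate}[label=\textup{(\alph*)},leftmargin=2.2em,itemsep=1pt]
\item The initial quiver $\Xj{0}\in\Sclass_{2k}$ has closed nucleus $[m]$, global descent $e=1$, and alternating vertex $w=2$. Since the vertices of $T$ point into $[m]$, it has a sink, namely $m+1$ or the relevant vertex of $T$. By \cref{cor:status-exits}(ii), every vertex other than $1$ and that sink is an exit. The two cycle edges at $\Xj{0}$ are labeled $v_1=n$ and $v_{n+4k}=1$. I need to check that the sink is exactly $n$; since $T$ is acyclic with $b_{ij}=q_{ij}>0$ for $i<j$, the unique sink is $n$.
\item The quivers produced by the steps $(n,n-1,\ldots,m+1)$ are mutations at successive sinks. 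I expect each of them to again lie in $\Sclass_{2k}$ or $\SSclass_{2k}$ with the same nucleus, and I would verify this directly via \cref{prop:npm-decomposition}. The $\SSclass_{2k}$ case arises once both $\nminus$ and $\nplus$ are nonempty, and there \cref{cor:status-exits}(i) applies with source and sink equal to the two neighbours on $\Gamma$.
\item The block $(1,2)^k$ is the alternating sequence, and \cref{thm:s-alternating-mutations} gives exits at all interior quivers together with the new $\Sclass_{2k}$ status, whose nucleus is $\{1,2\}\sqcup\nminus$ or $\{1,2\}\sqcup\nplus$. Here the two non-exits $e,w$ are precisely the cycle labels. The remaining blocks are treated symmetrically, again by \cref{thm:s-alternating-mutations}.
\end{enumerate}

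I expect the main obstacle to be the bookkeeping in (b) and at the block junctions. There I must confirm at each step that the source/sink and global descent named in \cref{cor:status-exits} coincide exactly with the labels of the two incident edges of $\Gamma$. This requires tracking $\nminus$ and $\nplus$ as the vertices of $T$ and of $[3,m]$ migrate across the nucleus. Global reversal, via \cref{rem:global-reversal}, should halve these cases.
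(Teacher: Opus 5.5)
Your main step is the paper's proof. At each $\Xj{j}$ you apply \cref{cor:status-exits} to the $\Sclass_{2k}$/$\SSclass_{2k}$ quivers of the sink-mutation blocks and \cref{thm:s-alternating-mutations} to the alternating blocks, and check that the two possible non-exits are the two labels of $\Gamma$ at $\Xj{j}$. Your first paragraph then correctly handles simple cycles that meet $\Gamma$.

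The gap is in your treatment of a simple cycle $\Gamma'$ that shares no quiver with $\Gamma$. You assert that the walk from $\Xj{j}$ out to $\Gamma'$, once around it, and back is not reduced, and you discard it. The alternatives you offer do not work with what is proved:
\begin{itemize}
\item An exit at some quiver of $\Gamma'$ contradicts nothing unless it labels an edge of $\Gamma'$.
\item For the exits $v\in\nminus\cup\nplus$, the paper only shows that $\mu_v$ lands in $\Rclass$ and that no later quiver re-enters $\Sclass\cup\SSclass$ (\cref{thm:r1-sequences,thm:r2k-ss-sequences}). This does not exclude a cycle among those later quivers.
\item Your fallback observation is true, but it gives no contradiction by itself.
\end{itemize}
As written, disjoint cycles are not ruled out.

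The argument you discarded is in fact correct. Choose a shortest path $\Xj{j}=Z_0,Z_1,\ldots,Z_p$ ($p\ge1$) in the labeled mutation graph from the quivers of $\Gamma$ to those of $\Gamma'$, with labels $i_1,\ldots,i_p$. Let $c_1,\ldots,c_L$ be the labels of $\Gamma'$ read from $Z_p$. Minimality gives three facts:
\begin{itemize}
\item $i_t\ne i_{t+1}$ for each $t$;
\item $i_1$ is not a label of $\Gamma$ at $\Xj{j}$, so $i_1$ is an exit there;
\item $i_p\notin\{c_1,c_L\}$, since otherwise $Z_{p-1}$ would be a neighbour of $Z_p$ on $\Gamma'$.
\end{itemize}
Together with $c_L\ne c_1$, this makes the sequence
\[
 (i_1,\ldots,i_p,c_1,\ldots,c_L,i_p,\ldots,i_1)
\]
reduced. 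You arrive at $Z_p$ by $i_p$ and leave it only after a full circuit, whose last label is $c_L\ne i_p$, so there is no turnaround. The sequence starts with $i_1$ and returns to $\Xj{j}$, which contradicts \cref{def:exit}. Nothing about the quivers beyond the exit is needed. This is the step the paper compresses into its final sentence.

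Two smaller corrections.
\begin{itemize}
\item In (a) the orientation is reversed. Since $b_{ij}(\Xj{0})=q_{ij}>0$ for $i\in[m]$ and $j\in T$, we have $[m]\to T$, so $T=\nplus$. That is why $\Xj{0}$ has the sink $n$.
\item The last blocks are not a label-for-label mirror of (b)--(c). The final letters $2,1$ of $(m,\ldots,1)$ merge with $(2,1)^k$. So after the sink block $(m,\ldots,3)$, the alternating block has length $2k+2$ and needs the sink case of \cref{thm:s-alternating-mutations}; the paper applies it to $\Xj{0}$ backward along $(1,2)^{k+1}$. At the junctions $\Xj{n-m+2k}$ and $\Xj{n+2k-2}$, the two vertices excluded in \cref{cor:status-exits}(ii) are the new global descent $2$ together with the sink $m$, respectively the source $3$.
\end{itemize}
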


\begin{proof}
By construction, $\Xj{0}$ and $\Xj{n-m}$ belong to $\Sclass_{2k}$, with a
sink $n$ and a source $m+1$, respectively. The source case of
\cref{thm:s-alternating-mutations}, applied to $\Xj{n-m}$ along $(1,2)^k$,
and the sink case, applied to $\Xj{0}$ along the reverse sequence
$(1,2)^{k+1}$, give
\[
 \Xj{n-m+2k},\qquad \Xj{n+2k-2}\in\Sclass_{2k},
\]
respectively. We verify the exit assertions in four ranges.

\emph{Case 1:} $0\le j\le n-m$.
The vertices $n,n-1,\ldots,m+1$ are mutated successively at sinks. For
$0<j<n-m$, $\Xj{j}\in\SSclass_{2k}$, and its source and sink label the two
incident edges of $\Gamma$. At $\Xj{0}$ these labels are the global descent
$1$ and sink $n$; at $\Xj{n-m}$ they are the global descent $1$ and source
$m+1$.

\emph{Case 2:} $n-m+1\le j\le n-m+2k-1$.
The source case applies to
$\Xj{n-m}$ along $(1,2)^k$, where $e=1$ and $w=2$ are precisely the labels
of the two edges of $\Gamma$ incident to $\Xj{j}$.

\emph{Case 3:} $n-m+2k\le j\le n+2k-2$.
The vertices $m,m-1,\ldots,3$ are mutated successively at sinks. For
$n-m+2k<j<n+2k-2$, $\Xj{j}\in\SSclass_{2k}$, and its source and sink label
the two incident edges of $\Gamma$. At $\Xj{n-m+2k}$ these labels are the
global descent $2$ and sink $m$; at $\Xj{n+2k-2}$ they are the global descent
$2$ and source $3$.

\emph{Case 4:} $n+2k-1\le j\le n+4k-1$.
The sink case applies to $\Xj{0}$ along
$(1,2)^{k+1}$, the reverse of the final mutation sequence, where $e=1$ and
$w=2$ are precisely the labels of the two edges of $\Gamma$ incident to
$\Xj{j}$.

Thus \cref{thm:s-alternating-mutations,cor:status-exits} show that every
mutation away from $\Gamma$ is performed at an exit. Hence $\Gamma$ is the
unique simple cycle in the labeled mutation graph.
\end{proof}

These four ranges are summarized in \cref{fig:cycle}.

\begin{figure}[H]
\centering
\begin{tikzpicture}[thick,>=stealth,x=1cm,y=1cm]
\node[align=center,font=\small] (q0) at (-2.7,1.2)
 {$\Xj{0}\in\Sclass_{2k}$};
\node[align=center,font=\small] (q1) at (2.7,1.2)
 {$\Xj{n-m}\in\Sclass_{2k}$};
\node[align=center,font=\small] (q2) at (2.7,-1.2)
 {$\Xj{n-m+2k}\in\Sclass_{2k}$};
\node[align=center,font=\small] (q3) at (-2.7,-1.2)
 {$\Xj{n+2k-2}\in\Sclass_{2k}$};
\draw (q0) -- node[above=2pt] {$\scriptstyle\SSclass_{2k}$}
 node[below=2pt] {$\scriptstyle\mu_{(n,\ldots,m+1)}$} (q1);
\draw (q1) -- node[left=5pt] {$\scriptstyle\mu_{(1,2)^k}$} (q2);
\draw (q2) -- node[below=2pt] {$\scriptstyle\SSclass_{2k}$}
 node[above=2pt] {$\scriptstyle\mu_{(m,\ldots,3)}$} (q3);
\draw (q3) -- node[right=5pt] {$\scriptstyle\mu_{(2,1)^{k+1}}$} (q0);

\draw (q0.north west) -- ([yshift=8pt]q0.north west);
\draw (q1.north east) -- ([yshift=8pt]q1.north east);
\draw[decorate,decoration={brace,amplitude=4pt}]
 ([yshift=8pt]q0.north west) -- ([yshift=8pt]q1.north east)
 node[midway,above=10pt,align=center,font=\scriptsize]
 {exits\\\cref{cor:status-exits}};
\draw (q3.south west) -- ([yshift=-8pt]q3.south west);
\draw (q2.south east) -- ([yshift=-8pt]q2.south east);
\draw[decorate,decoration={brace,amplitude=4pt,mirror}]
 ([yshift=-8pt]q3.south west) -- ([yshift=-8pt]q2.south east)
 node[midway,below=10pt,align=center,font=\scriptsize]
 {exits\\\cref{cor:status-exits}};

\coordinate (ltop) at (-2.7,.72);
\coordinate (lbottom) at (-2.7,-.72);
\coordinate (rtop) at (2.7,.72);
\coordinate (rbottom) at (2.7,-.72);
\fill (ltop) circle (1.15pt) (lbottom) circle (1.15pt)
      (rtop) circle (1.15pt) (rbottom) circle (1.15pt);
\draw[line width=.45pt] (ltop) -- (-3.1,.72)
                         (lbottom) -- (-3.1,-.72);
\draw[decorate,decoration={brace,amplitude=4pt}]
 (-3.1,-.72) -- (-3.1,.72)
 node[midway,left=14pt,align=center,font=\scriptsize]
 {exits\\\cref{thm:s-alternating-mutations}};
\draw[line width=.45pt] (rtop) -- (3.1,.72)
                         (rbottom) -- (3.1,-.72);
\draw[decorate,decoration={brace,amplitude=4pt}]
 (3.1,.72) -- (3.1,-.72)
 node[midway,right=14pt,align=center,font=\scriptsize]
 {exits\\\cref{thm:s-alternating-mutations}};
\end{tikzpicture}
\caption{The cycle $\Gamma$.}
\label{fig:cycle}
\end{figure}

\Needspace{14\baselineskip}

\end{document}